\numberwithin{equation}{section}
\newtheorem{theorem}{Theorem}[section]
\newtheorem{lemma}{Lemma}[section]
\newtheorem{remark}{Remark}[section]
\newtheorem{definition}{Definition}[section]
\newtheorem{example}{Example}[section]
\def\XXint#1#2#3{{\setbox0=\hbox{$#1{#2#3}{\int}$}
     \vcenter{\hbox{$#2#3$}}\kern-.5\wd0}}
\begin{document}

\title{ $\mathcal{B}_{1}$ classes of De\,Giorgi-Ladyzhenskaya-Ural'tseva and
their applications to elliptic and parabolic equations with generalized Orlicz
growth conditions
}

\author{Igor I. Skrypnik, Mykhailo V. Voitovych 
 }

 \maketitle

\begin{abstract}
We introduce elliptic and parabolic $\mathcal{B}_{1}$ classes that generalize
the well-known $\mathfrak{B}_{p}$ classes of De\,Giorgi, Ladyzhenskaya and Ural'tseva
with $p>1$.
New classes are applied to prove pointwise continuity of solutions of elliptic and parabolic
equations with nonstandard growth conditions.
Our considerations cover new cases of variable exponent and
$(p, q)$-phase growth including the ,,singular-degenerate'' parabolic case $p<2<q$.


\textbf{Keywords:}
Elliptic and parabolic equations, nonstandard growth,
De\,Giorgi classes, bounded solutions, continuity.

\textbf{MSC (2010)}: 35D30, 35J62, 35K59, 49N60.

\end{abstract}

\pagestyle{myheadings} \thispagestyle{plain} \markboth{Igor I.
Skrypnik}{$\mathcal{B}_{1}$ classes of De\,Giorgi-Ladyzhenskaya-Ural'tseva and
their applications . . .}

\section{Introduction}\label{Introduction}

In the present paper we consider the question of regularity of weak solutions
to quasilinear elliptic and parabolic equations with generalized Orlicz growth.
In addition, we define elliptic and parabolic $\mathcal{B}_{1}$ classes that
generalize the well-known $\mathfrak{B}_{p}$ classes of De Giorgi, Ladyzhenskaya
and Ural'tseva and prove the continuity of the functions belonging to these classes.
Moreover, it turns out that solutions of the following equations belong to the
corresponding $\mathcal{B}_{1}$ classes:
\begin{equation}\label{eqSkr1.1}
{\rm div}\bigg( g_{i}(x,|\nabla u|)\frac{\nabla u}{|\nabla u|} \bigg)=0, \quad
i=\overline{1,4},
\end{equation}
\begin{equation}\label{eqSkr1.2}
u_{t}-{\rm div}\bigg( g_{i}(x,t,|\nabla u|)\frac{\nabla u}{|\nabla u|} \bigg)=0,\
\quad i=\overline{1,4},
\end{equation}
$$
g_{1}(\cdot, {\rm v}):= {\rm v}^{p(\cdot)-1}+{\rm v}^{q(\cdot)-1},
\quad
g_{2}(\cdot, {\rm v}):={\rm v}^{p(\cdot)-1} \big(1+\ln(1+{\rm v}) \big),
\quad {\rm v}>0,
$$
$$
g_{3}(\cdot, {\rm v}):= {\rm v}^{p-1}+a(\cdot){\rm v}^{q-1}, \
a(\cdot)\geqslant 0, \quad
g_{4}(\cdot, {\rm v}):={\rm v}^{p-1} \big(1+b(\cdot)\ln(1+{\rm v}) \big), \
b(\cdot)\geqslant 0, \  {\rm v}>0,
$$
where $p(\cdot)$, $q(\cdot)$, $a(\cdot)$ and $b(\cdot)$ satisfy the inequalities
\begin{equation}\label{eqSkr1.3}
|p(z)-p(y)|+|q(z)-q(y)|\leqslant
\frac{\lambda(|z-y|)}
{\big|\ln |z-y|\big|},
\quad z\neq y,
\end{equation}
$$
|a(z)-a(y)|\leqslant a_{0}|z-y|^{\alpha}\, e^{\lambda(|z-y|)},
\quad z\neq y, \quad a_{0}>0, \quad 0<\alpha \leqslant 1,
\quad \lim\limits_{r\rightarrow 0}r^{\alpha}\,e^{\lambda(r)}=0,
$$
$$
|b(z)-b(y)|\leqslant  \frac{b_{0}\,e^{\lambda(|z-y|)}}{\big|\ln |z-y| \big|},
\quad z\neq y, \quad b_{0}>0,
\quad \lim\limits_{r\rightarrow 0} \frac{e^{\lambda(r)}}{\ln r^{-1}}=0,
$$
with a precise choice of $\lambda(\cdot)\geqslant 0$.

The study of regularity of minima of functionals with non-standard growth of
$(p,q)$-type has been initiated by Zhikov
\cite{ZhikIzv1983, ZhikIzv1986, ZhikJMathPh94, ZhikJMathPh9798, ZhikKozlOlein94},
Marcellini \cite{Marcellini1989, Marcellini1991} and Lieberman \cite{Lieberman91},
and in the last
thirty years, the qualitative theory of second order equations with so-called log-condition
(i.e. if $\lambda(r)\leqslant L_{1}<+\infty$) has been actively developed (see, for instance,
\cite{AcerbiFuscoJDE94, AcerbiMingioneArchRat01, AcerbiMingioneAnSc01, AcerbiMingioneArchRat02,
AcerbiMingioneJRAngMath05, Alhutov97, AlhutovKrash04, AlhutovKrash08, AlkhSurnAlgAn19,
AlkhSurnApAn19, BarColMing, BarColMingStPt16, BarColMingCalc.Var.18, BenedMascoloAbsApplAn04, BurSkr_JEvolEq, ByunOh17, ByunRyuShin18, ChiadoPiatCoscia, ColMing15, ColMing218, ColMingJFnctAn16, ElMarcMas16, ElMarcMasPuraAppl16, ElMarcMasAdvCalc17, GiandiNap13, HarHastLee18, HarHastToiv17, WangLiuZhao19, Ok16, OkJMAnAppl16, OkJMAdvNonAn18, OkCalcVar17, ZhangRadul18,
ZhikIzv1983, ZhikIzv1986, ZhikJDiffEq91, ZhikJMathPh94}).
Equations of this type and  systems of such equations arise in various problems
of mathematical physics, a description of which can be found in the monographs of
Antontsev, D\'{\i}az, Shmarev \cite{AntDiazShm2002_monogr},
Harjulehto, H\"{a}st\"{o} \cite{HarHastOrlicz},
R\r{u}\v{z}i\v{c}ka \cite{Ruzicka2000} and Weickert \cite{Weickert}.

The case when condition \eqref{eqSkr1.3} holds, differs substantially
from the log-case, i.e. when $\lambda(r)\leqslant L_{1}<+\infty$.
To our knowledge there are few results in this direction. Zhikov \cite{ZhikPOMI04}
obtained a generalization of the logarithmic condition which guarantees the density
of smooth functions in Sobolev space $W^{1,p(x)}(\Omega)$. Particularly,
this result holds if $1<p\leqslant p(x)$,
$$
|p(x)-p(y)|\leqslant
\frac{L \left|\ln \left|\ln \frac{1}{|x-y|}\right| \right|}{\ln \frac{1}{|x-y|}},
\quad x\neq y, \quad L<p/n.
$$
Interior continuity and continuity up to the boundary for $p(x)$-Laplace equation
were proved by Alkhutov and Surnachev \cite{AlkhSurnAlgAn19}. It is natural
to conjecture that the interior continuity holds for elliptic and parabolic equations
of the form \eqref{eqSkr1.1}, \eqref{eqSkr1.2} under the condition \eqref{eqSkr1.3}.

In the present paper has been made an attempt to unify the approach of De\,Giorgi
to establish the local regularity of solutions to elliptic and parabolic equations
with non-standard growth. As it was already mentioned, we give an extension of the
well known elliptic and parabolic $\mathfrak{B}_{p}$ classes defined by
Ladyzhenskaya and Ural'tseva \cite{LadUr} and Di\,Benedetto
\cite{DiBenedetto86, DiBenedettoDegParEq}.

Our paper is organized as follows. In Section 2 we define elliptic $\mathcal{B}_{1}$
classes and prove local continuity for functions of $\mathcal{B}_{1}$
classes. In Section 3 we define parabolic $\mathcal{B}_{1}$ classes and prove
pointwise continuity
of functions from these classes. Moreover, we give an answer to still open problem
on the regularity of solutions to parabolic equations with $(p,q)$-growth in the case
$p<2<q$.
An approach to its solution was announced in our review article \cite{SkrVoitUMB19}.
For other well-known cases when $1<p\leqslant q\leqslant 2$ or $2\leqslant p\leqslant q<+\infty$,
we refer the reader to the papers of Hwang and Lieberman
\cite{HwangLieberman287, HwangLieberman288}.
We also note that our proofs do not require studying the special properties
of Orlicz spaces (cf. \cite{HarHastLee18, HarHastToiv17, HastOkarXiv19, WangLiuZhao19}).


\section{Elliptic $\mathcal{B}_{1}$ classes}\label{sectellB1cl}

\subsection{Notation and auxiliary propositions}

Everywhere below, $\Omega$ is a bounded domain in $\mathbb{R}^{n}$, $n\geqslant2$.
For arbitrary $\rho>0$ and $y\in \mathbb{R}^{n}$,
$B_{\rho}(y):=\{x\in \mathbb{R}^{n}: |x-y|<\rho\}$ is the open $n$-dimensional ball
centered at $y$ and radius $\rho$.
For every Lebesgue measurable set $E\subset \mathbb{R}^{n}$, we denote by $|E|$ the $n$-dimensional
Lebesgue measure of $E$ (or the $n+1$-dimensional measure, if $E\subset \mathbb{R}^{n+1}$).
We will also use the well-known notation for sets, function spaces and for their elements
(see \cite{DiBenedettoDegParEq, LadUr} for references). 

The following two lemmas will be used in the sequel.
The first one is the well-known De\,Giorgi-Poincare lemma (see \cite[Chap.\,II,\,Lemma\,3.9]{LadUr}).
\begin{lemma}\label{lem2.1DGPn}
Let $u\in W^{1,1}(B_{\rho}(x_{0}))$.
Then, for any $k,l\in \mathbb{R}$, $k<l$, the following
inequalities hold:
$$
(l-k)\,|A^{+}_{l,\rho}|^{1-\frac{1}{n}}
\leqslant
\frac{c\,\rho^{n}}{|B_{\rho}(x_{0})\setminus A^{+}_{k,\rho}|}
\int\limits_{A^{+}_{k,\rho}\setminus A^{+}_{l,\rho}} |\nabla u|\,dx,
$$
\begin{equation}\label{eq2.2}
(l-k)\,|A^{-}_{k,\rho}|^{1-\frac{1}{n}}
\leqslant
\frac{c\,\rho^{n}}{|B_{\rho}(x_{0})\setminus A^{-}_{l,\rho}|}
\int\limits_{A^{-}_{l,\rho}\setminus A^{-}_{k,\rho}} |\nabla u|\,dx,
\end{equation}
where $A^{+}_{k,\rho}:= B_{\rho}(x_{0})\cap \{u>k\}$, $A^{-}_{k,\rho}:= B_{\rho}(x_{0})\cap \{u<k\}$
and $c$ is a positive constant depending only on $n$.
\end{lemma}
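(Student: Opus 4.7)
The plan is to apply the Sobolev-Poincar\'e inequality to the double truncation $w := \min\{(u-k)_{+},\, l-k\}$, which belongs to $W^{1,1}(B_{\rho}(x_{0}))$ and is engineered to expose the geometry of the two level sets $\{u=k\}$ and $\{u=l\}$: it vanishes on $B_{\rho}(x_{0})\setminus A^{+}_{k,\rho}$, saturates at $l-k$ on $A^{+}_{l,\rho}$, and has gradient $\nabla w = \nabla u\cdot \chi_{A^{+}_{k,\rho}\setminus A^{+}_{l,\rho}}$ almost everywhere. The ``$-$'' inequality will then be reduced to the ``$+$'' one by replacing $u$ with $-u$ and the pair $(k,l)$ with $(-l,-k)$.

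First I would apply the Sobolev-Poincar\'e inequality to $w$ on $B_{\rho}(x_{0})$:
$$\left(\int_{B_{\rho}(x_{0})}|w-\bar{w}|^{\frac{n}{n-1}}\,dx\right)^{\frac{n-1}{n}}\leq c(n)\int_{B_{\rho}(x_{0})}|\nabla w|\,dx = c(n)\int_{A^{+}_{k,\rho}\setminus A^{+}_{l,\rho}}|\nabla u|\,dx,$$
where $\bar{w}$ denotes the mean of $w$ over $B_{\rho}(x_{0})$. I would then exploit the two ``flat'' pieces of $w$. Restricting the integrand to $B_{\rho}(x_{0})\setminus A^{+}_{k,\rho}$, where $w\equiv 0$ and hence $|w-\bar w|=\bar w$, yields an upper bound for the mean,
$$\bar{w}\,|B_{\rho}(x_{0})\setminus A^{+}_{k,\rho}|^{\frac{n-1}{n}} \leq c(n)\int_{A^{+}_{k,\rho}\setminus A^{+}_{l,\rho}}|\nabla u|\,dx.$$
Restricting instead to $A^{+}_{l,\rho}$, where $w\equiv l-k$, and applying the triangle inequality produces
$$(l-k)\,|A^{+}_{l,\rho}|^{\frac{n-1}{n}} \leq \bar{w}\,|A^{+}_{l,\rho}|^{\frac{n-1}{n}} + c(n)\int_{A^{+}_{k,\rho}\setminus A^{+}_{l,\rho}}|\nabla u|\,dx.$$

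Finally, I would substitute the estimate for $\bar w$ into the last display and use $|A^{+}_{l,\rho}|^{(n-1)/n}\leq |B_{\rho}(x_{0})|^{(n-1)/n}\leq c(n)\rho^{n-1}$ to arrive at a prefactor of the form $c(n)\bigl(\rho^{n}/|B_{\rho}(x_{0})\setminus A^{+}_{k,\rho}|\bigr)^{(n-1)/n}$. Because $|B_{\rho}(x_{0})|\asymp \rho^{n}$, this ratio is bounded below by a positive constant depending only on $n$, so raising it to the power $(n-1)/n$ only makes it smaller (up to a dimensional constant) than the ratio itself; absorbing the constant gives the exponent $1$ that appears in the statement, and the case $|B_{\rho}(x_{0})\setminus A^{+}_{k,\rho}|=0$ is trivial by convention. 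There is no deep obstacle in the argument; the only point requiring care is this bookkeeping step that promotes the natural Sobolev exponent $(n-1)/n$ on the ``hole''-measure to the cleaner power $1$ demanded by the lemma.
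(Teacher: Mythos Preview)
Your argument is correct. The paper does not supply its own proof of this lemma; it simply cites \cite[Chap.\,II, Lemma 3.9]{LadUr} as a well-known result. Your route through the Sobolev--Poincar\'e inequality applied to the double truncation $w=\min\{(u-k)_{+},\,l-k\}$ is one of the standard ways to obtain the estimate, and the bookkeeping step you flag (promoting the exponent $(n-1)/n$ on the ratio $\rho^{n}/|B_{\rho}\setminus A^{+}_{k,\rho}|$ to exponent $1$ using that this ratio is bounded below by a dimensional constant) is handled correctly. The original argument in Ladyzhenskaya--Ural'tseva proceeds somewhat differently, via a direct line-integral/Riesz-potential estimate rather than invoking the packaged Sobolev--Poincar\'e inequality, but the two approaches are essentially equivalent in content and difficulty.
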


The following lemma can also be found in \cite[Chap.~II, Lemma~4.7]{LadUr}.
\begin{lemma}\label{lemonyj}
Let $y_{j}$, $j=0,1,2,\ldots$, be a sequence of nonnegative numbers satisfying
$$
y_{j+1}\leqslant c\,b^{j}y_{j}^{1+\delta}, \quad j=0,1,2,\ldots,
$$
with some constants $\delta>0$ and $c,b>1$. Then
$$
y_{j}\leqslant c^{\frac{(1+\delta)^{j}-1}{\delta}}b^{\frac{(1+\delta)^{j}-1}{\delta^{2}}-\frac{j}{\delta}}
y_{0}^{(1+\delta)^{j}}, \quad j=0,1,2,\ldots.
$$
Particularly, if $y_{0}\leqslant \nu:=c^{-\frac{1}{\delta}}b^{-\frac{1}{\delta^{2}}}$, then
$$
y_{j}\leqslant \nu b^{-\frac{j}{\delta}} \quad \text{and} \quad \lim_{j\rightarrow\infty} y_{j}=0.
$$
\end{lemma}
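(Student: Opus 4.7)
The plan is a direct induction on $j$ for the explicit bound, followed by a one-line specialization to obtain the decay estimate. Let me denote $E_j:=\frac{(1+\delta)^j-1}{\delta}$ and $F_j:=\frac{(1+\delta)^j-1}{\delta^{2}}-\frac{j}{\delta}$, so the target inequality reads $y_j\leqslant c^{E_j}\,b^{F_j}\,y_0^{(1+\delta)^j}$.

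First I would check the base case $j=0$: all three exponents collapse ($E_0=F_0=0$ and $(1+\delta)^0=1$), giving $y_0\leqslant y_0$ trivially. For the inductive step, I would assume the bound at step $j$ and plug it into the hypothesis $y_{j+1}\leqslant c\,b^{j}y_j^{1+\delta}$. Raising to the $(1+\delta)$ power, the exponent of $y_0$ becomes $(1+\delta)^{j+1}$ as desired, while the exponents of $c$ and $b$ need to be simplified. For $c$, one gets $1+(1+\delta)E_j$, and the algebraic identity $1+(1+\delta)E_j=E_{j+1}$ is straightforward because $\delta+(1+\delta)((1+\delta)^j-1)=(1+\delta)^{j+1}-1$. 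For $b$, one gets $j+(1+\delta)F_j$; one rewrites this as $j-\frac{j(1+\delta)}{\delta}+\frac{(1+\delta)^{j+1}-(1+\delta)}{\delta^{2}}=-\frac{j}{\delta}+\frac{(1+\delta)^{j+1}-(1+\delta)}{\delta^{2}}$, and then checks it equals $F_{j+1}=\frac{(1+\delta)^{j+1}-1-(j+1)\delta}{\delta^{2}}$ by putting everything over the common denominator $\delta^{2}$. Both identities are purely algebraic manipulations.

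For the second assertion, I would simply substitute the hypothesis $y_0\leqslant\nu=c^{-1/\delta}b^{-1/\delta^{2}}$ into the explicit bound. The factor $y_0^{(1+\delta)^j}$ contributes $c^{-(1+\delta)^j/\delta}b^{-(1+\delta)^j/\delta^{2}}$, which combines with $c^{E_j}b^{F_j}$ so that all terms of the form $(1+\delta)^j$ in the exponents cancel; what survives is exactly $c^{-1/\delta}b^{-1/\delta^{2}-j/\delta}=\nu\,b^{-j/\delta}$. Since $b>1$ and $\delta>0$, we have $b^{-j/\delta}\to 0$, yielding $y_j\to 0$.

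There is no real obstacle here: the argument is a textbook induction whose only substance is the bookkeeping of exponents. The one spot where care is needed is the $b$-exponent identity $j+(1+\delta)F_j=F_{j+1}$, since the negative linear term $-j/\delta$ has to be tracked carefully through the recurrence; grouping $j-j(1+\delta)/\delta=-j/\delta$ at the right moment keeps the manipulation transparent.
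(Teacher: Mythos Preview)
Your proof is correct and is exactly the standard induction argument; the paper itself does not prove this lemma but simply cites \cite[Chap.~II, Lemma~4.7]{LadUr}, where the same induction is carried out. There is nothing to add.
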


\subsection{Elliptic $\mathcal{B}_{1}$ classes and local continuity}

Let $\lambda(r)$ be continuous and non-increasing on the interval
$(0,1)$ and consider the function
$g(x, {\rm v}): \Omega\times \mathbb{R}_{+}\rightarrow \mathbb{R}_{+}$
having the following properties:
for every ${\rm v}\in \mathbb{R}_{+}$, the function
$x\rightarrow g(x, {\rm v})$ is measurable;
for every $x\in\Omega$, the function ${\rm v}\rightarrow g(x, {\rm v})$ is continuous
and nondecreasing;
$$
\lim\limits_{{\rm v}\rightarrow +0}g(x, {\rm v})=0 \ \ \text{and} \ \
\lim\limits_{{\rm v}\rightarrow +\infty}g(x, {\rm v})=+\infty.
$$
For the function $g$ we also assume that
\begin{itemize}
\item[(${\rm g}_{1}$)]
there exist $c_{1}>0$, $q>1$ and $s_{0}\geqslant0$ such that,
for a.a. $x\in \Omega$ and for ${\rm w}\geqslant{\rm v}>s_{0}$,
$$
\frac{g(x, {\rm w})}{g(x, {\rm v})}\leqslant
c_{1} \left( \frac{{\rm w}}{{\rm v}} \right)^{q-1},
$$
\end{itemize}
\begin{itemize}
\item[(${\rm g}_{2}$)]
for $K>0$ and for any ball $B_{8r}(x_{0})\subset\Omega$ there exists $c_{2}(K)>0$
such that, for a.\,a. $x_{1}$, $x_{2}\in B_{r}(x_{0})$ and for all
$r\leqslant {\rm v}\leqslant K$,
$$
g(x_{1},  {\rm v}/r)\leqslant c_{2}(K)e^{\lambda(r)}g(x_{2},  {\rm v}/r).
$$
\end{itemize}

\begin{definition}
{\rm We say that a measurable function $u:\Omega\rightarrow \mathbb{R}$
belongs to the elliptic class $\mathcal{B}_{1,g,\lambda}(\Omega)$, if
$u\in W^{1,1}_{{\rm loc}}(\Omega)\cap L^{\infty}(\Omega)$ and there exist
positive numbers $c_{3}\geqslant q$, $c_{4}$, $c_{5}$, $c_{6}$, $K_{1}$
such that for any ball $B_{\rho}(x_{0})\subset B_{8\rho}(x_{0})\subset \Omega$,
for any $k$, $l\in \mathbb{R}$, $k<l$, $|k|, |l|<M:={\rm ess}\sup\limits_{\Omega} |u|$,
for any $\varepsilon\in (0,1]$, $\sigma\in (0,1)$, and any
$\zeta\in C_{0}^{\infty}(B_{\rho}(x_{0}))$,
$0\leqslant\zeta\leqslant1$, $\zeta=1$ in $B_{\rho(1-\sigma)}(x_{0})$,
$|\nabla \zeta|\leqslant (\sigma\rho)^{-1}$,
%
the following inequalities hold:
\begin{multline}\label{eqSkrel2.1}
\int\limits_{A^{+}_{k,\rho}\setminus A^{+}_{l,\rho}}
|\nabla u|\,\zeta^{c_{3}}dx\leqslant
K_{1}\frac{e^{c_{4}\lambda(\rho)}}{\varepsilon}\, \frac{M_{+}(k,\rho)}{\rho}\,
|A^{+}_{k,\rho}\setminus A^{+}_{l,\rho}|
\\
+  \frac{K_{1}\varepsilon^{c_{5}}\sigma^{-c_{6}}}{g\left(x_{0}, \frac{M_{+}(k,\rho)}
{\rho} \right)}
\int\limits_{A^{+}_{k,\rho}} g\left( x, \frac{K_{1}(u-k)_{+}}{\sigma\rho\,\zeta} \right)
\frac{(u-k)_{+}}{\rho}\, \zeta^{c_{3}-1}dx,
\end{multline}
\begin{multline}\label{eqSkrel2.2}
\int\limits_{A^{-}_{l,\rho}\setminus A^{-}_{k,\rho}}
|\nabla u|\,\zeta^{c_{3}}dx\leqslant
K_{1}\frac{e^{c_{4}\lambda(\rho)}}{\varepsilon}\, \frac{M_{-}(l,\rho)}{\rho}\,
|A^{-}_{l,\rho}\setminus A^{-}_{k,\rho}|
\\
+  \frac{K_{1}\varepsilon^{c_{5}}\sigma^{-c_{6}}}{g\left(x_{0}, \frac{M_{-}(l,\rho)}{\rho} \right)}
\int\limits_{A^{-}_{l,\rho}}
g\left( x, \frac{K_{1}(u-l)_{-}}{\sigma\rho\,\zeta} \right)
\frac{(u-l)_{-}}{\rho}\, \zeta^{c_{3}-1}dx,
\end{multline}
provided that $M_{+}(k,\rho)\geqslant\rho$, $M_{-}(l,\rho)\geqslant\rho$.
Here we assume that $g$ satisfies condition 
(${\rm g}_{2}$) with
$K=2M$,  $(u-k)_{\pm}:=\max\{\pm(u-k),\, 0\}$,
$M_{\pm}(k,\rho):={\rm ess}\!\!\!\sup\limits_{B_{\rho}(x_{0})} (u-k)_{\pm}$,
$A^{\pm}_{k,\rho}:=B_{\rho}(x_{0})\cap \{(u-k)_{\pm}>0\}$.
}
\end{definition}

We refer to the parameters $M$, $K_{1}$, $n$, $q$,
$c_{1}$, $c_{2}$, $c_{3}$, $c_{4}$, $c_{5}$, $c_{6}$
as our structural data, and we write $\gamma$ if it can be
quantitatively determined a priori only in terms of the above
quantities. The generic constant $\gamma$ may change from line to line.

Our first main result of this section reads as follows.

\begin{theorem}\label{elliptth2.1}
Let $u\in\mathcal{B}_{1,g,\lambda}(\Omega)$, and let
$\rho_{0}$ be a sufficiently small positive number such that
$B_{8\rho_{0}}(x_{0})\subset \Omega$.
There exist positive numbers $c$, $\beta$ depending only on the data
such that if
\begin{equation}\label{Lambd32cond}
\exp\big( c\Lambda(\beta,r) \big)\leqslant
\frac{3}{2}\exp\big( c\Lambda(\beta,2r) \big),
\quad \Lambda(\beta,r):=\exp(\beta\lambda(r)),
\end{equation}
for all $0<r\leqslant \rho/2<\rho_{0}/2$, then
\begin{multline}\label{estellipth2.1}
{\rm osc}\{u; B_{r}(x_{0})\}:=
{\rm ess}\sup\limits_{B_{r}(x_{0})}u - {\rm ess}\inf\limits_{B_{r}(x_{0})}u
\\
\leqslant
2M\exp\left( -\gamma\int\limits_{2r}^{\rho}
\exp \big(-c\Lambda(\beta,t)\big)\frac{dt}{t} \right)
+\gamma(1+s_{0})\rho\exp \big( c\Lambda(\beta,\rho) \big),
\end{multline}
for all $0<2r<\rho<\rho_{0}$.

If additionally
\begin{equation}\label{condonLambdabetar}
\int\limits_{0} \exp \left(-c\Lambda(\beta,r)\right)\frac{dr}{r}=+\infty
\quad \text{and} \quad
\lim\limits_{r\rightarrow0}\, r \exp \left( c\Lambda(\beta,r) \right)=0,
\end{equation}
then $u$ is continuous at $x_{0}$.
\end{theorem}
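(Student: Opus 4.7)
The strategy is the classical De\,Giorgi scheme adapted to the Orlicz-type inhomogeneity: first derive a measure-to-$L^\infty$ lemma from the energy inequalities (2.3)--(2.4), then produce a one-step oscillation decrement on dyadic scales, and finally sum the decrement along a geometric chain of radii to obtain (2.13). By the symmetry of (2.3) and (2.4) it suffices to treat one sign, so I work throughout with $(u-k)_+$ and transfer conclusions to $-u$.

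\textbf{Step 1 (first De\,Giorgi lemma).} For $B_\rho(x_0)\subset B_{8\rho}(x_0)\subset\Omega$ and a level $k$ with $M_+(k,\rho)\geqslant\rho$, the goal is: there exists $\nu_0$ depending on the data and on $\lambda(\rho)$ such that $|A^+_{k,\rho}|\leqslant\nu_0|B_\rho(x_0)|$ implies $u\leqslant k+\tfrac12 M_+(k,\rho)$ a.e.\ on $B_{\rho/2}(x_0)$. On the nested radii $\rho_j:=\rho/2+\rho\,2^{-j-1}$ I take cut-offs $\zeta_j\in C_0^\infty(B_{\rho_j})$ with $\zeta_j=1$ on $B_{\rho_{j+1}}$ and $|\nabla\zeta_j|\leqslant 2^{j+3}/\rho$, and set $k_j:=k+\tfrac12 M_+(k,\rho)(1-2^{-j})$. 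Combining (2.3) (with $l=k_{j+1}$, $k=k_j$, $\sigma_j\asymp 2^{-j}$) and Lemma~2.1, the factor $g\bigl(x,K_1(u-k_j)_+/(\sigma_j\rho\,\zeta_j)\bigr)$ on the support of $\zeta_j$ is controlled by $c_1 c_2(2M)\,e^{\lambda(\rho)}(\sigma_j\zeta_j)^{-(q-1)}\,g(x_0,M_+(k,\rho)/\rho)$ via $(g_1)$ on the monotone argument and $(g_2)$ on the spatial transposition, and the $\zeta_j^{-(q-1)}$ piece is absorbed into the weight $\zeta_j^{c_3-1}$ on the right of (2.3) since $c_3\geqslant q$. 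Choosing $\varepsilon$ small in the data alone to reabsorb the first term of (2.3) and bundling the $\sigma_j^{-(c_6+q-1)}$ factors into $b^{\,j}$, one arrives at
\begin{equation*}
y_{j+1}\leqslant \gamma\,e^{c\lambda(\rho)}\,b^{\,j}\,y_j^{1+1/n},\qquad y_j:=\frac{|A^+_{k_j,\rho_j}|}{|B_{\rho_j}(x_0)|},
\end{equation*}
to which Lemma~2.2 applies, delivering the claim.

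\textbf{Step 2 (oscillation reduction and iteration).} Set $\omega(\rho):={\rm osc}\{u;B_\rho(x_0)\}$. If $\omega(\rho)\leqslant 2\rho$ then (2.13) is trivial at that scale; otherwise one of the two sets $\{u>\mu^+-\omega/2\}$, $\{u<\mu^-+\omega/2\}$ (with $\mu^\pm:={\rm ess}\sup_{B_\rho}(\pm u)$) has measure at most $|B_\rho(x_0)|/2$. A second De\,Giorgi iteration over the intermediate levels $k_s:=\mu^+-\omega/2^s$, in which (2.3) with $\zeta\equiv 1$ on $B_{\rho/2}$ and Lemma~2.1 combine to drive $|A^+_{k_s,\rho/2}|$ below the threshold $\nu_0$ of Step~1 after an appropriate number of halvings, followed by Step~1 on $B_{\rho/2}$ at level $k_s$, yields
\begin{equation*}
\omega(\rho/4)\leqslant\bigl(1-\eta(\rho)\bigr)\omega(\rho)+\gamma(1+s_0)\rho\,e^{c\Lambda(\beta,\rho)},\qquad \eta(\rho)\asymp\exp\bigl(-c\Lambda(\beta,\rho)\bigr),
\end{equation*}
the additive $s_0$-term coming from the standing hypothesis $M_\pm(k,\rho)\geqslant\rho$ and the exponent $\beta$ emerging from the careful bookkeeping of the $e^{\lambda(\rho)}$-factors of $(g_2)$ as they compound through this second alternative. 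Iterating on the geometric sequence $r_j:=4^{-j}\rho$, using $1-\eta\leqslant e^{-\eta}$ and the doubling-type hypothesis (2.11) to pass from $\sum_j\eta(r_j)$ to $\int_{2r}^{\rho}\exp\bigl(-c\Lambda(\beta,t)\bigr)\,dt/t$, then telescopes into (2.13). Pointwise continuity at $x_0$ follows from (2.12): the first condition makes the integral diverge (killing the leading term) while the second annihilates the additive correction.

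\textbf{Main obstacle.} The delicate point is the non-homogeneity of $g$: on the right of (2.3) its argument $K_1(u-k_j)_+/(\sigma_j\rho\,\zeta_j)$ can be much larger than the reference value $M_+(k,\rho)/\rho$ near $\{\zeta_j=0\}$, and comparing it to $g(x_0,M_+(k,\rho)/\rho)$ costs both a power of $\sigma_j^{-1}$ (absorbed into $b^{\,j}$ via $(g_1)$) and an exponential $e^{\lambda(\rho)}$ via $(g_2)$. Keeping this exponential factor sharp, and carefully tracking how it compounds through the two-level De\,Giorgi alternative in Step~2 so that the rate $\eta(\rho)$ takes exactly the form $\exp(-c\Lambda(\beta,\rho))$ compatible with the doubling hypothesis (2.11), is what aligns everything with (2.13); any looseness propagates multiplicatively across the dyadic chain and would destroy both the oscillation estimate and the continuity assertion.
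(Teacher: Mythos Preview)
Your proposal is correct and follows essentially the same De\,Giorgi scheme as the paper: the paper packages your Step~1 as Lemma~2.3 and the measure-shrinking part of your Step~2 as Lemma~2.4 (``Expansion of positivity''), then iterates on the dyadic sequence $\rho_j=2^{-j}\rho$ exactly as you describe, using $1-\eta\leqslant e^{-\eta}$ and condition~(2.11) to convert the sum into the integral in~(2.13). One technical point you leave implicit but which is the linchpin of the paper's Lemma~2.4: in the measure-shrinking step the parameter $\varepsilon$ in (2.3)--(2.4) is chosen \emph{depending on the measure}, namely $\varepsilon=\bigl(|A^{\pm}_{k_j,\rho}\setminus A^{\pm}_{k_{j+1},\rho}|/|B_\rho|\bigr)^{1/(1+c_5)}$, which is precisely what yields the summable exponent $c_5/(c_5+1)$ and hence the explicit form $j_\ast\asymp e^{\beta\lambda(\rho)}$ for the number of halvings, i.e.\ $\eta(\rho)\asymp\exp(-c\Lambda(\beta,\rho))$; your sketch of Step~2 is consistent with this but does not name the device.
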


We note that the function $\lambda(r)=L\ln\ln\ln \dfrac{1}{r}$
satisfies conditions \eqref{Lambd32cond}, \eqref{condonLambdabetar} if
$0<L<1/\beta$.
%

Theorem \ref{elliptth2.1} is an immediate consequence of the following two lemmas.

\begin{lemma}[De\,Giorgi type lemma]\label{DGellem2.3}
Let $u\in\mathcal{B}_{1,g,\lambda}(\Omega)$, and let
$B_{r}(x_{0})\subset \Omega$. Set
$$
\mu^{+}_{r}:={\rm ess}\sup\limits_{B_{r}(x_{0})}u, \quad
\mu^{-}_{r}:={\rm ess}\inf\limits_{B_{r}(x_{0})}u, \quad
\omega_{r}:=\mu^{+}_{r}-\mu^{-}_{r},
$$
and fix $\xi$, $a\in(0,1)$. Then there exists $\nu_{1}\in(0,1)$
depending only on the data and $a$
such that if
\begin{equation}
|\{x\in B_{r}(x_{0}): u(x)< \mu_{r}^{-}+\xi\,\omega_{r}\}|
\leqslant \nu_{1}e^{-c_{4}n\lambda(r)} |B_{r}(x_{0})|,
\end{equation}
then either
\begin{equation}\label{eqSkrel2.6}
a\,\xi\,\omega_{r} \leqslant (1+s_{0})r,
\end{equation}
or
\begin{equation*}
u(x)\geqslant \mu_{r}^{-}+a\,\xi\,\omega_{r} \quad \text{for a.a.} \ x\in B_{r/2}(x_{0}).
\end{equation*}
Likewise, if
\begin{equation}
|\{x\in B_{r}(x_{0}): u(x)> \mu_{r}^{+}-\xi\,\omega_{r}\}|
\leqslant \nu_{1}e^{-c_{4}n\lambda(r)} |B_{r}(x_{0})|,
\end{equation}
then either \eqref{eqSkrel2.6} holds true, or
\begin{equation*}
u(x)\leqslant \mu_{r}^{+}-a\,\xi\,\omega_{r} \quad \text{for a.a.} \ x\in B_{r/2}(x_{0}).
\end{equation*}
\end{lemma}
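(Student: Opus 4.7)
The plan is to run the classical De\,Giorgi iteration, using \eqref{eqSkrel2.2} as the energy estimate, coupled with the De\,Giorgi--Poincar\'e inequality \eqref{eq2.2} and Lemma \ref{lemonyj}. I focus on the ``lower'' case (the upper case is symmetric, using \eqref{eqSkrel2.1}). Suppose \eqref{eqSkrel2.6} fails, so $a\xi\omega_{r}>(1+s_{0})r$, and set
$$
r_{j}:=\frac{r}{2}+\frac{r}{2^{j+1}},\quad
k_{j}:=\mu_{r}^{-}+a\xi\omega_{r}+(1-a)\xi\omega_{r}\,2^{-j},\quad
\sigma_{j}:=\frac{r_{j}-r_{j+1}}{r_{j}}\sim 2^{-j},
$$
and pick cutoffs $\zeta_{j}\in C_{0}^{\infty}(B_{r_{j}}(x_{0}))$ with $\zeta_{j}\equiv 1$ on $B_{r_{j+1}}(x_{0})$ and $|\nabla\zeta_{j}|\leqslant C\,2^{j}/r$. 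Define $y_{j}:=|A^{-}_{k_{j},r_{j}}|/|B_{r_{j}}|$. Note $M_{-}(k_{j},r_{j})\geqslant k_{j}-\mu_{r}^{-}\geqslant a\xi\omega_{r}>(1+s_{0})r\geqslant r_{j}$, so that \eqref{eqSkrel2.2} is applicable at every step, and $M_{-}(k_{j},r_{j})\leqslant \xi\omega_{r}\leqslant 2M$.

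Next I derive the recursion. In \eqref{eqSkrel2.2} with $l=k_{j}$, $k=k_{j+1}$, $\rho=r_{j}$, $\zeta=\zeta_{j}$, the first term on the right is already controlled by $C\varepsilon^{-1}e^{c_{4}\lambda(r)}\frac{\xi\omega_{r}}{r}|A^{-}_{k_{j},r_{j}}|$. For the second, I apply $({\rm g}_{1})$ (using $\zeta_{j}\leqslant 1$ and $\sigma_{j}^{-1}\sim 2^{j}$) to peel off the $\zeta$-- and $\sigma$--dependence,
$$
g\!\left(x,\tfrac{K_{1}(u-k_{j})_{-}}{\sigma_{j} r_{j}\zeta_{j}}\right)
\leqslant c_{1}\bigl(K_{1}/(\sigma_{j}\zeta_{j})\bigr)^{q-1}\,g\!\left(x,\tfrac{(u-k_{j})_{-}}{r_{j}}\right),
$$
then monotonicity and $({\rm g}_{2})$ with $K=2M$ transfer the base point to $x_{0}$:
$$
g\!\left(x,\tfrac{(u-k_{j})_{-}}{r_{j}}\right)\leqslant g\!\left(x,\tfrac{M_{-}(k_{j},r_{j})}{r_{j}}\right)
\leqslant c_{2}(2M)\,e^{\lambda(r_{j})}\,g\!\left(x_{0},\tfrac{M_{-}(k_{j},r_{j})}{r_{j}}\right).
$$
The $g(x_{0},\cdot)$ factor cancels the prefactor, and since $c_{3}\geqslant q$ the surviving $\zeta_{j}^{c_{3}-q}\leqslant 1$. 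Using $(u-k_{j})_{-}/r_{j}\leqslant 2\xi\omega_{r}/r$, one obtains
$$
\int\limits_{A^{-}_{k_{j},r_{j}}\setminus A^{-}_{k_{j+1},r_{j}}}\!\!|\nabla u|\,\zeta_{j}^{c_{3}}dx
\leqslant C\,\frac{\xi\omega_{r}}{r}\!\left(\frac{e^{c_{4}\lambda(r)}}{\varepsilon}+\varepsilon^{c_{5}}e^{\lambda(r)}\right)b^{j}\,|A^{-}_{k_{j},r_{j}}|,
$$
for some $b>1$ depending on the data (coming from $\sigma_{j}^{-(c_{6}+q-1)}$).

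To close the iteration, I apply Lemma \ref{lem2.1DGPn} on the ball $B_{r_{j+1}}(x_{0})$ with levels $k_{j+1}<k_{j}$; since $\zeta_{j}=1$ on $B_{r_{j+1}}(x_{0})$, the energy bound above controls the gradient integral. The volume $|B_{r_{j+1}}\setminus A^{-}_{k_{j},r_{j+1}}|$ stays comparable to $|B_{r_{j+1}}|$ throughout the iteration because $y_{j}$ will be small (verified a posteriori via Lemma \ref{lemonyj}). Using $k_{j}-k_{j+1}=(1-a)\xi\omega_{r}\,2^{-j-1}$ and normalising, the $\xi\omega_{r}$ factors cancel and I arrive at
$$
y_{j+1}\leqslant \tilde{C}\!\left(\frac{e^{c_{4}\lambda(r)}}{\varepsilon}+\varepsilon^{c_{5}}e^{\lambda(r)}\right)^{\!n/(n-1)}\tilde{b}^{\,j}\,y_{j}^{\,1+\frac{1}{n-1}}.
$$
Balancing the two $\varepsilon$--contributions (e.g.\ $\varepsilon^{1+c_{5}}=e^{-(c_{4}-1)\lambda(r)}$) bundles them into a single prefactor $\exp\!\bigl(c_{*}\lambda(r)\bigr)$. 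Lemma \ref{lemonyj} with $\delta=1/(n-1)$ then yields $y_{j}\to 0$ provided $y_{0}\leqslant \nu_{1} e^{-c_{4}n\lambda(r)}$, which is precisely the hypothesis. Passing to the limit gives $(u-\mu_{r}^{-}-a\xi\omega_{r})_{-}=0$ a.e.\ on $B_{r/2}(x_{0})$, which is the conclusion.

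The main technical obstacle is the step of bounding the $g$--integral: one has to use $({\rm g}_{1})$ to extract the $(\sigma\zeta)^{-(q-1)}$ factor and simultaneously use $({\rm g}_{2})$ to transfer the $x$--dependence to the fixed point $x_{0}$ (so that the prefactor $1/g(x_{0},M_{-}/r)$ in \eqref{eqSkrel2.2} can actually cancel the integrand), while keeping the $e^{\lambda(r)}$--cost under control. Once this is done, the balancing of $\varepsilon$ and the exponent $n$ in the threshold $\nu_{1}e^{-c_{4}n\lambda(r)}$ are determined, and the rest is standard.
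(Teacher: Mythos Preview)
Your argument is correct and follows essentially the same route as the paper: the same levels $k_{j}$, radii $r_{j}$, cutoffs $\zeta_{j}$, the same use of \eqref{eqSkrel2.2} together with $({\rm g}_{1})$--$({\rm g}_{2})$ to cancel the $g(x_{0},M_{-}/r)$ prefactor, and the same closing via Lemma~\ref{lemonyj}. The only cosmetic differences are that the paper applies the $W^{1,1}\hookrightarrow L^{n/(n-1)}$ Sobolev embedding to $(\max\{u,k_{j+1}\}-k_{j})_{-}\zeta_{j}^{c_{3}}$ instead of Lemma~\ref{lem2.1DGPn} (this avoids having to bound $|B_{r_{j+1}}\setminus A^{-}_{k_{j},r_{j+1}}|$ from below; in your version that bound follows directly from $A^{-}_{k_{j},r_{j+1}}\subset A^{-}_{k_{0},r}$ and the hypothesis, so ``a posteriori'' is not needed), and the paper simply takes $\varepsilon=1$ rather than balancing---your balancing is harmless but unnecessary, and $\varepsilon=1$ already yields the coefficient $e^{c_{4}\lambda(r)}$ and hence the stated threshold $\nu_{1}e^{-c_{4}n\lambda(r)}$.
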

\begin{proof}
The proof of the lemma is similar to that of
\cite[Chap.\,II, Lemma\,6.1]{LadUr}.
%
For $j=0,1,2,\ldots$, we set $r_{j}:=\dfrac{r}{2}(1+2^{-j})$,
$k_{j}:=\mu^{-}_{r}+a\,\xi\,\omega_{r} + \dfrac{(1-a)\xi\,\omega_{r}}{2^{j}}$,
$B_{j}:=B_{r_{j}}(x_{0})$, $y_{j}:=|A^{-}_{k_{j}, r_{j}}|$.
And let $\zeta_{j}\in C_{0}^{\infty}(B_{j})$,
$0\leqslant\zeta_{j}\leqslant1$, $\zeta_{j}=1$ in $B_{j+1}$,
$|\nabla \zeta_{j}|\leqslant 2^{j+2}/r$.

Applying the Sobolev embedding theorem to the function
$(\max\{u, k_{j+1}\}-k_{j})_{-}\,\zeta_{j}^{c_{3}}$, 
we obtain
\begin{multline*}
\frac{(1-a)\xi\,\omega_{r}}{2^{j+1}}\, |A^{-}_{k_{j+1}, r_{j+1}}|^{1-\frac{1}{n}}=
(k_{j}-k_{j+1})\, |A^{-}_{k_{j+1}, r_{j+1}}|^{1-\frac{1}{n}}
\\ \leqslant
\bigg( \int\limits_{A^{-}_{k_{j+1}, r_{j+1}}}
(\max\{u, k_{j+1}\}-k_{j})_{-}^{\frac{n}{n-1}}\,dx \bigg)^{1-\frac{1}{n}}
\leqslant
\int\limits_{\Omega} \left|\nabla \big[ (\max\{u, k_{j+1}\}-k_{j})_{-}\,\zeta_{j}^{c_{3}}\big]\right| dx
\\
\leqslant \int\limits_{A^{-}_{k_{j}, r_{j}}\setminus A^{-}_{k_{j+1}, r_{j}}}
|\nabla u|\,\zeta_{j}^{c_{3}} dx+c_{3}
\frac{2 (1-a) \xi\,\omega_{r}}{r}\,|A^{-}_{k_{j}, r_{j}}|. 
\end{multline*}
Next, using inequality \eqref{eqSkrel2.2} with
$l=k_{j}$, $k=k_{j+1}$, $\varepsilon=1$, $\sigma=(2^{j+1}+2)^{-1}$,
$\rho=r_{j}$, $\zeta=\zeta_{j}$,
we obtain that
\begin{multline*}
\int\limits_{A^{-}_{k_{j}, r_{j}}\setminus A^{-}_{k_{j+1}, r_{j}}}
|\nabla u|\,\zeta_{j}^{c_{3}} dx
\leqslant
2K_{1} e^{c_{4}\lambda(r)}\, \frac{\xi\,\omega_{r}}{r}
\,|A^{-}_{k_{j}, r_{j}}|
\\
+\frac{K_{1}2^{c_{6}(j+2)+1} }
{ g\left( x_{0}, \frac{M_{-}(k_{j},r_{j})}{r} \right)}
\frac{\xi\,\omega_{r}}{r} \int\limits_{A^{-}_{k_{j},r_{j}}}
g\left( x, \frac{K_{1}2^{j+2}M_{-}(k_{j},r_{j})}{\zeta_{j}\,r} \right)\,
\zeta_{j}^{c_{3}-1}dx.
\end{multline*}
If inequality \eqref{eqSkrel2.6} is violated, then
$$
\frac{K_{1}2^{j+2}M_{-}(k_{j},r_{j})}{\zeta_{j}\,r}>
\frac{M_{-}(k_{j},r_{j})}{r}\geqslant \frac{a\,\xi\,\omega_{r}}{r}>1+s_{0},
$$
and by (${\rm g}_{1}$) and (${\rm g}_{2}$),
the following inequality holds for $x\in A^{-}_{k_{j},r_{j}}$:
\begin{multline*}
g\left( x, \frac{K_{1}2^{j+2}M_{-}(k_{j},r_{j})}{\zeta_{j}\,r} \right)\,
\leqslant
c_{1}
\left( \frac{K_{1}2^{j+2}}{\zeta_{j}} \right)^{q-1}
g\left( x, \frac{M_{-}(k_{j},r_{j})}{r} \right)
\\
\leqslant
\gamma\, 2^{(j+2)(q-1)}
e^{\lambda(r)}g\left( x_{0}, \frac{M_{-}(k_{j},r_{j})}{r} \right)
\zeta_{j}^{1-q}
\end{multline*}
From the previous we arrive at 
\begin{equation*}\label{yj+1yj}
y_{j+1}\leqslant
\frac{\gamma\, 2^{j\gamma}}{1-a}  \, \frac{e^{c_{4}\lambda(r)}}{r}\,
y_{j}^{1+\frac{1}{n}}, \quad j=0,1,2,\ldots
\end{equation*}
From this, by Lemma \ref{lemonyj}, it follows that
$\lim\limits_{j\rightarrow\infty}y_{j}=0$,
provided $\nu_{1}$ is chosen to satisfy
$$
\nu_{1}:=\gamma^{-1}(1-a)^{n},
$$
which proves the lemma.
\end{proof}

\begin{lemma}[Expansion of positivity]\label{DGellem2.4}
Let $u\in\mathcal{B}_{1,g,\lambda}(\Omega)$, and let $B_{8r}(x_{0})\subset \Omega$
and $\xi\in(0,1)$. Assume that with some $\alpha\in (0,1)$
there holds
\begin{equation}\label{eqSkrel2.10}
|\{x\in B_{r}(x_{0}): u(x)< \mu^{-}_{r}+\xi\,\omega_{r}\}|
\leqslant (1-\alpha)\, |B_{r}(x_{0})|.
\end{equation}
Then either 
\begin{equation}\label{eqSkrel2.11}
\omega_{r}\leqslant 2^{j_{\ast}}(1+s_{0})\,r
\end{equation}
or
\begin{equation}\label{alteqSkrel2.11}
u(x)\geqslant \mu_{r}^{-}+\omega_{r}/2^{j_{\ast}+1} \ \
\text{for a.a.} \ x\in B_{r/2}(x_{0}),
\end{equation}
where
\begin{equation}\label{defeljast}
j_{\ast}= 2+\log_{2}\xi^{-1}+(\gamma/\nu_{1})^{\frac{(n-1)(c_{5}+1)}{nc_{5}}}
e^{\beta\lambda(r/4)}, \quad
\beta=\frac{2c_{4}(n-1)(c_{5}+1)}{nc_{5}},
\end{equation}
and $\nu_{1}$ is the constant from the previous lemma.

Likewise, if
\begin{equation}
|\{x\in B_{r}(x_{0}): u(x)> \mu_{r}^{+}-\xi\,\omega_{r}\}|
\leqslant (1-\alpha)\, |B_{r}(x_{0})|,
\end{equation}
then either \eqref{eqSkrel2.11} holds true, or
\begin{equation}
u(x)\leqslant \mu_{r}^{+}-\omega_{r}/2^{j_{\ast}+1} \ \
\text{for a.a.} \ x\in B_{r/2}(x_{0}).
\end{equation}
\end{lemma}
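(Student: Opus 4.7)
The plan is to perform a De\,Giorgi measure-propagation iteration in the level variable on the fixed ball $B_r(x_0)$, driving the relative measure of the sub-level set $\{u<k_j\}$ below the small-measure threshold of Lemma \ref{DGellem2.3}; once this is achieved, the pointwise bound \eqref{alteqSkrel2.11} on $B_{r/2}(x_0)$ falls out by a direct application of that lemma. The second assertion follows by running the same argument on $-u$, so I only treat the first. If $\omega_r\leqslant 2^{j_{\ast}}(1+s_0)r$ then \eqref{eqSkrel2.11} already holds, so I assume the opposite. Set $k_j:=\mu_r^-+\xi\omega_r/2^j$ and $Y_j:=|A^-_{k_j,r}|/|B_r|$: the hypothesis \eqref{eqSkrel2.10} reads $Y_0\leqslant 1-\alpha$, and the standing size assumption on $\omega_r$ ensures $M_-(k_j,r)=\xi\omega_r/2^j\geqslant r$ for every $j$ in the range we will touch, so that \eqref{eqSkrel2.2} is applicable at each such level.

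For each such $j$ I combine Lemma \ref{lem2.1DGPn} on $B_r$ with $l=k_j$, $k=k_{j+1}$ and the crude lower bound $|B_r\setminus A^-_{k_j,r}|\geqslant\alpha|B_r|$ (inherited from \eqref{eqSkrel2.10} since $k_j\leqslant k_0$) to obtain
$$\frac{\xi\omega_r}{2^{j+1}}\,|A^-_{k_{j+1},r}|^{1-1/n}\leqslant \frac{c}{\alpha}\int_{A^-_{k_j,r}\setminus A^-_{k_{j+1},r}}|\nabla u|\,dx,$$
and then bound the gradient integral by \eqref{eqSkrel2.2} with $\rho=r$, a fixed small $\sigma$, a cutoff $\zeta$ equal to $1$ on $B_{r(1-\sigma)}(x_0)$, and a free parameter $\varepsilon\in(0,1]$; the thin annulus $B_r\setminus B_{r(1-\sigma)}$ is absorbed by first running Lemma \ref{lem2.1DGPn} on $B_{r(1-\sigma)}(x_0)$, where the measure bound survives provided $(1-\sigma)^n>1-\alpha$. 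For the Orlicz term, $({\rm g}_1)$ applied with $w=K_1M_-/(\sigma r\zeta)\geqslant v=M_-/r>s_0$ and then $({\rm g}_2)$ give the pointwise estimate
$$g\bigl(x,K_1(u-k_j)_-/(\sigma r\zeta)\bigr)\leqslant\gamma\,\zeta^{1-q}e^{\lambda(r)}\,g\bigl(x_0,M_-(k_j,r)/r\bigr),$$
whose factor $g(x_0,\cdot)$ cancels the denominator in \eqref{eqSkrel2.2}, while $c_3\geqslant q$ makes the residual power $\zeta^{c_3-q}\leqslant 1$ harmless. Substituting $M_-(k_j,r)=\xi\omega_r/2^j$ and dividing by $(k_j-k_{j+1})=\xi\omega_r/2^{j+1}$ yields the nonlinear recurrence
$$Y_{j+1}^{(n-1)/n}\leqslant\frac{\gamma}{\alpha}\Bigl[\varepsilon^{-1}e^{c_4\lambda(r)}(Y_j-Y_{j+1})+\varepsilon^{c_5}\,Y_j\Bigr].$$

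Summing this recurrence from $j=0$ to $j=N-1$, using $(Y_j)$ decreasing together with $\sum(Y_j-Y_{j+1})\leqslant Y_0$ and $\sum Y_j\leqslant NY_0$, gives $NY_N^{(n-1)/n}\leqslant\gamma\alpha^{-1}Y_0[\varepsilon^{-1}e^{c_4\lambda(r)}+\varepsilon^{c_5}N]$; balancing $\varepsilon$ via $\varepsilon^{c_5+1}=e^{c_4\lambda(r)}/N$ and raising to the $n/(n-1)$ power yields $Y_N\leqslant\gamma\alpha^{-n/(n-1)}(e^{c_4\lambda(r)}/N)^{nc_5/((c_5+1)(n-1))}$. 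I then choose $N$ minimal so that the right-hand side is bounded by $\nu_1 e^{-c_4n\lambda(r)}$; substituting $\lambda(r)\leqslant\lambda(r/4)$ into the resulting inequality produces exactly the size indicated in \eqref{defeljast} with $\beta=2c_4(n-1)(c_5+1)/(nc_5)$. With this $N$, Lemma \ref{DGellem2.3} applied to $u$ on $B_r(x_0)$ at level $k_N$ with $a=1/2$ has its hypothesis met by the preceding smallness of $Y_N$; its conclusion delivers the dichotomy $\xi\omega_r/2^{N+1}\leqslant(1+s_0)r$ (which is \eqref{eqSkrel2.11} after rewriting $\xi/2^{N+1}=1/2^{j_{\ast}+1}$) or $u\geqslant\mu_r^-+\xi\omega_r/2^{N+1}=\mu_r^-+\omega_r/2^{j_{\ast}+1}$ on $B_{r/2}(x_0)$, which is \eqref{alteqSkrel2.11}.

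The main obstacle is the simultaneous handling of the three parameters: the level index $j$, the geometric cutoff $\sigma$ (small enough that $\zeta\equiv 1$ on a set carrying the Poincar\'e mass, yet large enough that the lower bound $|B_{r(1-\sigma)}\setminus A^-_{k_j,r(1-\sigma)}|\geqslant(\alpha/2)|B_{r(1-\sigma)}|$ persists after passing to the smaller ball), and the Orlicz parameter $\varepsilon$ (which has to minimize $\varepsilon^{-1}e^{c_4\lambda(r)}+\varepsilon^{c_5}N$); throughout, the structural prerequisite $M_-(k_j,r)\geqslant r$ of the class $\mathcal{B}_{1,g,\lambda}$ must be preserved at every step of the iteration, which is precisely the role of the size dichotomy built into \eqref{eqSkrel2.11}.
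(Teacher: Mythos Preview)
Your argument is correct and follows the same overall route as the paper: apply the De\,Giorgi--Poincar\'e inequality at successive dyadic levels $k_j$, feed the resulting gradient integral into the class inequality \eqref{eqSkrel2.2}, simplify the Orlicz term via $({\rm g}_1)$--$({\rm g}_2)$, sum in $j$, and finish with Lemma~\ref{DGellem2.3}. The one noteworthy difference is in the handling of $\varepsilon$: the paper picks $\varepsilon$ \emph{level by level}, namely
\[
\varepsilon=\varepsilon_j=\Bigl(\tfrac{|A^-_{k_j,r}\setminus A^-_{k_{j+1},r}|}{|B_r(x_0)|}\Bigr)^{1/(1+c_5)},
\]
so that both terms on the right of \eqref{eqSkrel2.2} acquire the same power $|A^-_{k_j,r}\setminus A^-_{k_{j+1},r}|^{c_5/(c_5+1)}$, after which one raises to the power $(c_5+1)/c_5$ and sums a telescoping series. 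You instead keep a single global $\varepsilon$, sum first, and balance afterwards via $\varepsilon^{c_5+1}=e^{c_4\lambda(r)}/N$. Both choices produce the same final bound $Y_N\leqslant\gamma\,(e^{c_4\lambda(r)}/N)^{nc_5/((n-1)(c_5+1))}$ and hence the same $j_\ast$ up to structural constants. The paper's level-dependent choice is the classical Ladyzhenskaya--Ural'tseva device; your global balancing is perhaps marginally more transparent but slightly less sharp when the annular measures $Y_j-Y_{j+1}$ are very uneven. Also, the paper sidesteps your cutoff issue by taking $\sigma=1/2$ and applying the De\,Giorgi--Poincar\'e lemma on $B_{r/2}$ rather than on $B_{r(1-\sigma)}$ with small $\sigma$; either works, but the former avoids the bookkeeping of tracking measures on two nested balls.
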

\begin{proof}
The proof of the lemma is completely similar to that of
\cite[Chap.\,II, Lemma\,6.2]{LadUr}.

Without loss of generality we assume that
$\log_{2}\xi^{-1}\in \mathbb{N}$.
We set
$k_{j}:=\mu_{r}^{-}+\omega_{r}/2^{j}$ for every
$j=\log_{2}\xi^{-1},\, 1+\log_{2}\xi^{-1},\, 2+\log_{2}\xi^{-1},\ldots,\, j_{\ast}-1$.
Let $\zeta\in C_{0}^{\infty}(B_{r}(x_{0}))$, $0\leqslant\zeta\leqslant1$,
$\zeta=1$ in $B_{r/2}(x_{0})$, $|\nabla \zeta|\leqslant2/r$.
For fixed $j$, inequality \eqref{eqSkrel2.2} with
$l=k_{j}$, $k=k_{j+1}$, $\rho=r$, $\sigma=1/2$ and
$$
\varepsilon=\left( \frac{|A^{-}_{k_{j},r}\setminus A^{-}_{k_{j+1},r}|}
{|B_{r}(x_{0})|} \right)^{\frac{1}{1+c_{5}}}
$$
yields
\begin{multline}\label{eqSkrel2.2ellem2.4}
\int\limits_{A^{-}_{k_{j},r}\setminus A^{-}_{k_{j+1},r}}
|\nabla u|\,\zeta^{c_{3}} dx \leqslant
\frac{M_{-}(k_{j},r)}{r}
\left(\frac{|A^{-}_{k_{j},r}\setminus
A^{-}_{k_{j+1},r}|}{|B_{r}(x_{0})|}\right)^{\frac{c_{5}}{c_{5}+1}}
|B_{r}(x_{0})|\,e^{c_{4}\lambda(r)}
\\
+\frac{M_{-}(k_{j},r)}{r}
\left(\frac{|A^{-}_{k_{j},r}\setminus
A^{-}_{k_{j+1},r}|}{|B_{r}(x_{0})|}\right)^{\frac{c_{5}}{c_{5}+1}}
\frac{2^{c_{6}}K_{1}}{g\left(x_{0}, \frac{M_{-}(k_{j},r)}{r}\right)}
\int\limits_{A^{-}_{k_{j},r}} g\left( x, \frac{2K_{1}M_{-}(k_{j},r)}{r\,\zeta} \right)
\zeta^{c_{3}-1}\,dx.
\end{multline}
If \eqref{eqSkrel2.11} is violated, then
$$
\frac{2K_{1}M_{-}(k_{j},r)}{r\,\zeta}>
\frac{M_{-}(k_{j},r)}{r}=
\frac{\omega_{r}}{2^{j}r}\geqslant
\frac{\omega_{r}}{2^{j_{\ast}}r}
>1+s_{0}.
$$
Therefore, we can apply conditions (${\rm g}_{1}$) and (${\rm g}_{2}$) to estimate the integral on
the right-hand side of \eqref{eqSkrel2.2ellem2.4} as follows:
\begin{multline*}
\int\limits_{A^{-}_{k_{j},r}} g\left( x, \frac{2K_{1}M_{-}(k_{j},r)}{r\,\zeta} \right)
\zeta^{c_{3}-1}\,dx
\leqslant c_{1}(2K_{1})^{q-1}
\int\limits_{A^{-}_{k_{j},r}} g\left( x, \frac{M_{-}(k_{j},r)}{r} \right)
\zeta^{c_{3}-q}\,dx
\\
\leqslant
c_{1}c_{2}(M) (2K_{1})^{q-1}\,e^{\lambda(r)}\,|B_{r}(x_{0})|\,
g\Big( x_{0}, \frac{M_{-}(k_{j},r)}{r} \Big).
\end{multline*}
The combination of this inequality and \eqref{eqSkrel2.2ellem2.4} gives the estimate
\begin{equation}\label{LadUr6.22}
\int\limits_{A^{-}_{k_{j},r}\setminus A^{-}_{k_{j+1},r}}
|\nabla u|\,\zeta^{c_{3}} dx
\leqslant
\gamma\,\frac{M_{-}(k_{j},r)}{r}
\left(\frac{|A^{-}_{k_{j},r}\setminus
A^{-}_{k_{j+1},r}|}{|B_{r}(x_{0})|}\right)^{\frac{c_{5}}{c_{5}+1}}
|B_{r}(x_{0})|\,e^{c_{4}\lambda(r)}.
\end{equation}

On the other hand, inequality \eqref{eq2.2} with $l=k_{j}$, $k=k_{j+1}$,
$\rho=r/2$, and condition \eqref{eqSkrel2.10} imply that for $j\leqslant j_{\ast}$,
\begin{multline}\label{LadUr6.23}
|A^{-}_{k_{j_{\ast}},r}|^{1-\frac{1}{n}}
\leqslant \frac{c}{\alpha |B_{1}(x_{0})|}\, \frac{2^{j+1}}{\omega_{r}}
\int\limits_{A^{-}_{k_{j},r/2}\setminus A^{-}_{k_{j+1},r/2}}
|\nabla u|\,dx
=
\frac{\gamma}{M_{-}(k_{j},r)}
\int\limits_{A^{-}_{k_{j},r/2}\setminus A^{-}_{k_{j+1},r/2}}
|\nabla u|\,dx.
\end{multline}
Combining \eqref{LadUr6.22} and \eqref{LadUr6.23}, we get
$$
|A^{-}_{k_{j_{\ast}},r}|^{1-\frac{1}{n}}
\leqslant
\gamma\, \frac{e^{c_{4}\lambda(r)}}{r}
\left(\frac{|A^{-}_{k_{j},r}\setminus
A^{-}_{k_{j+1},r}|}{|B_{r}(x_{0})|}\right)^{\frac{c_{5}}{c_{5}+1}}
|B_{r}(x_{0})|, \quad j\leqslant j_{\ast},
$$
which implies
$$
|A^{-}_{k_{j_{\ast}},r}|^{\frac{(c_{5}+1)(n-1)}{c_{5}n}}
\leqslant
\gamma \bigg( \frac{e^{c_{4}\lambda(r)}}{r} \bigg)^{\frac{c_{5}+1}{c_{5}}}
|B_{r}(x_{0})|^{\frac{1}{c_{5}}}\,
|A^{-}_{k_{j},r}\setminus A^{-}_{k_{j+1},r}|.
$$
Summing up this inequality for $j=\log_{2}\xi^{-1},\ldots, j_{\ast}$,
we obtain 
\begin{equation}\label{estAkjastel}
|A^{-}_{k_{j_{\ast}},r}|
\leqslant
\gamma (j_{\ast}-\log_{2}\xi^{-1}-1)^{-\frac{nc_{5}}{(n-1)(c_{5}+1)}}
\exp\left(\frac{c_{4}n\,\lambda(r)}{n-1}\right) |B_{r}(x_{0})|.
\end{equation}
From this using \eqref{defeljast} and Lemma \ref{DGellem2.3},
we arrive at \eqref{alteqSkrel2.11}, which complete the proof of the lemma.
%
\end{proof}

\emph{\textbf{Proof of Theorem \ref{elliptth2.1}.}}
Fix $\rho\leqslant\rho_{0}$. The following two alternative
cases are possible
$$
|\{x\in B_{\rho}(x_{0}):u(x)>\mu^{+}_{\rho}-\omega_{\rho}/2\}|
\leqslant\frac{1}{2}|B_{\rho}(x_{0})|
$$
or
$$
|\{x\in B_{\rho}(x_{0}):u(x)<\mu^{-}_{\rho}+\omega_{\rho}/2\}|
\leqslant\frac{1}{2}|B_{\rho}(x_{0})|.
$$
Assume, for example, the first one.
Then, by Lemma \ref{DGellem2.4}, we obtain
$$
\omega_{\rho/2}\leqslant
\left[ 1-\exp\big(-\gamma\Lambda(\beta,\rho)\big) \right]\omega_{\rho}
+
\gamma(1+s_{0})\rho\exp\big(\gamma\Lambda(\beta,\rho)\big).
$$
Iterating this inequality 
we have for any $j\geqslant1$,
\begin{multline*}
\omega_{\rho_{j}}\leqslant \omega_{\rho}
\prod_{i=0}^{j-1}
\left[1-\exp \big(-\gamma\Lambda(\beta,\rho_{i})  \big) \right]
+
\gamma(1+s_{0})\rho_{j-1}\exp \big(\gamma\Lambda(\beta,\rho_{j-1})  \big)
\\
+\gamma(1+s_{0}) \sum_{i=0}^{j-2}
\rho_{i}\exp \big(\gamma\Lambda(\beta,\rho_{i})  \big)
\prod_{k=i+1}^{j-1}
\left[1-\exp \big(-\gamma\Lambda(\beta,\rho_{k})  \big) \right],
 \ \ \rho_{j}:=2^{-j}\rho.
\end{multline*}
Hence, using \eqref{Lambd32cond} and the fact that
$$
\prod_{i=0}^{j-1}
\left[1-\exp \big(-\gamma\Lambda(\beta,\rho_{i})  \big) \right]
\leqslant
\exp \left(-\sum_{i=0}^{j-1} \exp \big(-\gamma\Lambda(\beta,\rho_{i})  \big) \right),
$$
we obtain
$$
\omega_{\rho_{j}}\leqslant \omega_{\rho}
\exp \left(-\sum_{i=0}^{j-1} \exp \big(-\gamma\Lambda(\beta,\rho_{i})  \big) \right)
+\gamma(1+s_{0})\rho\exp\big(\gamma\Lambda(\beta,\rho)\big),
$$
from which the required \eqref{estellipth2.1} follows.
This completes the proof of Theorem \ref{elliptth2.1}.


\subsection{Elliptic equations with generalized Orlicz growth}

We consider the equation
\begin{equation}\label{gellequation}
{\rm div}\bigg( g(x,|\nabla u|)\frac{\nabla u}{|\nabla u|} \bigg)=0,
\quad x\in \Omega.
\end{equation}
Additionally, we assume that
\begin{itemize}
\item[(${\rm g}_{3}$)]
there exist $c_{7}>0$, $p\in(1,q)$ such that,
for a.a. $x\in \Omega$ and for ${\rm w}\geqslant{\rm v}>0$,
$$
\frac{g(x, {\rm w})}{g(x, {\rm v})}\geqslant
c_{7} \left( \frac{{\rm w}}{{\rm v}} \right)^{p-1}.
$$
\end{itemize}

We set ${\rm G}(x, {\rm v}):=g(x, {\rm v}){\rm v}$ for ${\rm v}>0$, and
write $W^{1,{\rm G}}(\Omega)$ for the class of functions which are weakly
differentiable in $\Omega$ with
$
\int\limits_{\Omega} {\rm G}(x, |\nabla u|)\,dx<+\infty.
$

By definition, a function $u\in W^{1,{\rm G}}(\Omega)\cap L^{\infty}(\Omega)$ is a
solution of Eq. \eqref{gellequation} if, for any function
$\varphi\in W^{1,{\rm G}}_{0}(\Omega):=
\left\{ w\in W^{1,{\rm G}}(\Omega): w \text{ has a compact support in } \Omega\right\}$,
the following identity is true:
\begin{equation}\label{gelintidentity}
\int\limits_{\Omega}
g(x,|\nabla u|)\,\frac{\nabla u}{|\nabla u|}\,\nabla\varphi\,dx=0.
\end{equation}

We show that the solutions of Eq. \eqref{gellequation} belong to the corresponding
$\mathcal{B}_{1}$ classes.

First, note a simple analogues of Young's inequality:
\begin{equation}\label{gYoungineq1}
g(x,a)b\leqslant \varepsilon g(x,a)a+g(x,b/\varepsilon)b,
\end{equation}
\begin{equation}\label{gYoungineq2}
g(x,a)b\leqslant \frac{1}{\varepsilon}\,g(x,a)a
+ \frac{\varepsilon^{p-1}}{ c_{7}}\,g(x,b)b,
\end{equation}
which are valid for any $\varepsilon\in (0,1)$, $a,b>0$ and for a.a. $x\in \Omega$.

Indeed, if $b\leqslant \varepsilon a$, then $g(x,a)b\leqslant \varepsilon g(x,a)a$,
and if $b>\varepsilon a$, then since the function ${\rm v}\rightarrow g(\cdot, {\rm v})$
is nondecreasing we have that $g(x,a)b\leqslant g(x,b/\varepsilon)b$,
that proves \eqref{gYoungineq1}. Using assumption (${\rm g}_{3}$) and similar arguments,
we arrive at \eqref{gYoungineq2}.

Let $u\in W^{1,{\rm G}}(\Omega)\cap L^{\infty}(\Omega)$ be a solution of Eq. \eqref{gellequation}.
We fix concentric balls
$B_{(1-\sigma)\rho}(x_{0})\subset B_{\rho}(x_{0})\subset\Omega$ and a function
$\zeta\in C_{0}^{\infty}(B_{\rho}(x_{0}))$, $0\leqslant\zeta\leqslant1$,
$\zeta=1$ in $B_{(1-\sigma)\rho}(x_{0})$, $|\nabla\zeta|\leqslant (\sigma\rho)^{-1}$.
Testing \eqref{gelintidentity} by $\varphi=(u-k)_{+}\zeta^{c_{3}}$, we obtain
$$
\int\limits_{A^{+}_{k,\rho}}
g(x,|\nabla u|)\, |\nabla u| \,\zeta^{c_{3}}dx\leqslant
c_{3}\int\limits_{A^{+}_{k,\rho}}
g(x,|\nabla u|)\, \frac{(u-k)_{+}}{\sigma\rho}\,\zeta^{c_{3}-1}dx.
$$
From this by \eqref{gYoungineq1} we get
$$
\int\limits_{A^{+}_{k,\rho}}
g(x,|\nabla u|)\, |\nabla u| \,\zeta^{c_{3}}dx\leqslant
\frac{\gamma(c_{3})}{\sigma\rho}
\int\limits_{A^{+}_{k,\rho}}
g\left( x, \frac{\gamma(c_{3}) (u-k)_{+}}{\sigma\rho\,\zeta} \right)
(u-k)_{+}\, \zeta^{c_{3}-1}dx
$$
If $M_{+}(k,\rho)\geqslant\rho$, then using \eqref{gYoungineq2} and condition (${\rm g}_{3}$), we have for any
$\varepsilon_{1}\in(0,1)$,
\begin{multline*}
\int\limits_{A^{+}_{k,\rho}\setminus A^{+}_{l,\rho}}
g\left(x_{0}, \frac{M_{+}(k,\rho)}{\rho}  \right)
|\nabla u|\,\zeta^{c_{3}}dx
\leqslant
c_{2}(M)e^{\lambda(\rho)}\int\limits_{A^{+}_{k,\rho}\setminus A^{+}_{l,\rho}}
g\left(x, \frac{M_{+}(k,\rho)}{\rho}  \right)
|\nabla u|\,\zeta^{c_{3}}dx
\\
\leqslant \frac{c_{2}(M)e^{\lambda(\rho)}}{\varepsilon_{1}}
\frac{M_{+}(k,\rho)}{\rho}
\int\limits_{A^{+}_{k,\rho}\setminus A^{+}_{l,\rho}}
g\left(x, \frac{M_{+}(k,\rho)}{\rho}  \right)dx+
c_{2}(M)c_{7}\varepsilon_{1}^{p-1}e^{\lambda(\rho)}
\int\limits_{A^{+}_{k,\rho}}
G(x, |\nabla u|)\,\zeta^{c_{3}}dx
\\
\leqslant
\frac{c_{2}^{2}(M)e^{2\lambda(\rho)}}{\varepsilon_{1}}
g\left(x_{0}, \frac{M_{+}(k,\rho)}{\rho}  \right)
\frac{M_{+}(k,\rho)}{\rho} |A^{+}_{k,\rho}\setminus A^{+}_{l,\rho}|
\\
+\gamma(c_{2}(M),c_{3},c_{7})\varepsilon_{1}^{p-1} \frac{e^{\lambda(\rho)}}{\sigma\rho}
\int\limits_{A^{+}_{k,\rho}}
g\left( x, \frac{\gamma(c_{3}) (u-k)_{+}}{\sigma\rho\,\zeta} \right)
(u-k)_{+}\, \zeta^{c_{3}-1}dx.
\end{multline*}
Choosing $\varepsilon_{1}$ from the condition
$\varepsilon_{1}=\varepsilon e^{-\frac{\lambda(\rho)}{p-1}}$,
we arrive at \eqref{eqSkrel2.1}.
The proof of \eqref{eqSkrel2.2} is completely similar.

Taking into account the previous arguments, we arrive at
\begin{theorem}\label{contgsolution}
Let $u\in W^{1,{\rm G}}(\Omega)\cap L^{\infty}(\Omega)$ be a solution
of Eq.  \eqref{gellequation} under assumptions {\rm (${\rm g}_{1}$)}, {\rm (${\rm g}_{2}$)},
{\rm (${\rm g}_{3}$)}, \eqref{Lambd32cond} and \eqref{condonLambdabetar},
then $u\in C_{{\rm loc}}(\Omega)$.
\end{theorem}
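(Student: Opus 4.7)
The plan is to show that any weak solution $u$ of \eqref{gellequation} lies in the class $\mathcal{B}_{1,g,\lambda}(\Omega)$, after which the conclusion is immediate from Theorem \ref{elliptth2.1}: hypotheses \eqref{Lambd32cond} and \eqref{condonLambdabetar} force $\mathrm{osc}\{u;B_r(x_0)\}\to 0$ as $r\to 0$ via the estimate \eqref{estellipth2.1}, and $x_0$ being arbitrary gives $u\in C_{\rm loc}(\Omega)$. Membership in $\mathcal{B}_{1,g,\lambda}(\Omega)$ has two ingredients. The regularity $u\in W^{1,1}_{\rm loc}(\Omega)\cap L^{\infty}(\Omega)$ follows from the hypothesis $u\in W^{1,{\rm G}}(\Omega)\cap L^{\infty}(\Omega)$ together with (${\rm g}_{3}$), which forces ${\rm G}(x,{\rm v})\gtrsim {\rm v}^{p}$ for large ${\rm v}$ and hence $u\in W^{1,p}_{\rm loc}\subset W^{1,1}_{\rm loc}$. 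What remains is to derive the two Caccioppoli-type inequalities \eqref{eqSkrel2.1} and \eqref{eqSkrel2.2}; by symmetry it suffices to establish \eqref{eqSkrel2.1}, since the other follows by testing with $-(u-l)_{-}\zeta^{c_{3}}$ in place of $(u-k)_{+}\zeta^{c_{3}}$.

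My derivation of \eqref{eqSkrel2.1} begins by testing the weak formulation \eqref{gelintidentity} against $\varphi=(u-k)_{+}\zeta^{c_{3}}$. The term in which the derivative falls on $\zeta$ is absorbed in the standard way via the Young-type inequality \eqref{gYoungineq1} (with $a=|\nabla u|$ and $b=c_{3}(u-k)_{+}/(\sigma\rho\zeta)$), producing the preliminary Caccioppoli bound
$$\int\limits_{A^{+}_{k,\rho}}g(x,|\nabla u|)\,|\nabla u|\,\zeta^{c_{3}}\,dx\leqslant \frac{\gamma}{\sigma\rho}\int\limits_{A^{+}_{k,\rho}}g\!\left(x,\frac{\gamma(u-k)_{+}}{\sigma\rho\,\zeta}\right)(u-k)_{+}\,\zeta^{c_{3}-1}\,dx.$$
To recover the shape of \eqref{eqSkrel2.1}, I then estimate $\int_{A^{+}_{k,\rho}\setminus A^{+}_{l,\rho}}|\nabla u|\,\zeta^{c_{3}}\,dx$ by multiplying and dividing the integrand by $g(x_{0},M_{+}(k,\rho)/\rho)$; the hypothesis $M_{+}(k,\rho)\geqslant\rho$ places us in the regime where (${\rm g}_{2}$) applies with ${\rm v}=M_{+}(k,\rho)$, so that $g(x_{0},\cdot)$ can be exchanged with $g(x,\cdot)$ at the cost of one factor $e^{\lambda(\rho)}$. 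I then split the resulting integrand with the reverse Young-type inequality \eqref{gYoungineq2} (whose validity uses (${\rm g}_{3}$)), parametrized by $\varepsilon_{1}\in(0,1)$; the energy-type term reabsorbs into the preliminary Caccioppoli bound above after one more use of (${\rm g}_{2}$), giving a second factor $e^{\lambda(\rho)}$.

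The decisive balancing step is to select $\varepsilon_{1}=\varepsilon\,\exp(-\lambda(\rho)/(p-1))$, which neutralizes the $e^{\lambda(\rho)}$ multiplying the $\varepsilon_{1}^{p-1}$ coming from \eqref{gYoungineq2}, and collects the remaining exponential into a single factor $e^{c_{4}\lambda(\rho)}/\varepsilon$ in front of the "mass" term. This produces exactly inequality \eqref{eqSkrel2.1}, with structural constants $c_{3},c_{4},c_{5},c_{6},K_{1}$ determined only by $n$, $p$, $q$, and the constants in (${\rm g}_{1}$)–(${\rm g}_{3}$). I expect the careful bookkeeping of how the $e^{\lambda(\rho)}$ factors arising from the two invocations of (${\rm g}_{2}$) combine with the reverse-Young parameter $\varepsilon_{1}$ to be the main technical obstacle, since the whole point of the class $\mathcal{B}_{1,g,\lambda}$ is to isolate this exponential dependence on $\lambda$ in the mass term rather than the energy term. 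Once \eqref{eqSkrel2.1}–\eqref{eqSkrel2.2} are established, $u\in\mathcal{B}_{1,g,\lambda}(\Omega)$ and the conclusion follows as indicated.
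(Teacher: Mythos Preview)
Your proposal is correct and follows essentially the same route as the paper: test \eqref{gelintidentity} with $(u-k)_{+}\zeta^{c_{3}}$, absorb via \eqref{gYoungineq1} to get the preliminary Caccioppoli bound, then multiply by $g(x_{0},M_{+}(k,\rho)/\rho)$, apply (${\rm g}_{2}$) and \eqref{gYoungineq2} with parameter $\varepsilon_{1}$, and finish with the choice $\varepsilon_{1}=\varepsilon\,e^{-\lambda(\rho)/(p-1)}$ before invoking Theorem~\ref{elliptth2.1}. One minor bookkeeping correction: the \emph{second} use of (${\rm g}_{2}$) is applied to the mass term (to convert $g(x,M_{+}/\rho)$ back to $g(x_{0},M_{+}/\rho)$, producing $e^{2\lambda(\rho)}/\varepsilon_{1}$ there), not to the energy term, which retains only the single factor $e^{\lambda(\rho)}$ from the first application---but this does not affect the final choice of $\varepsilon_{1}$ or the outcome.
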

\begin{remark}
{\rm We note that in the case, when
$g(x, {\rm v}):={\rm v}^{p(x)-1}$ (${\rm v}>0$, $x\in\Omega$),
$$
\text{osc}\{p(x);B_{r}(x_{0})\}\leqslant \frac{\lambda(r)}{\ln\frac{1}{r}}
$$
and $\lambda(r)$ satisfies \eqref{condonLambdabetar},
Theorem \ref{contgsolution} was proved in \cite{AlkhSurnAlgAn19}.
 We also note that in the case
$$
0\leqslant \lambda(r)\leqslant L<+\infty,
$$
conditions (${\rm g}_{1}$), (${\rm g}_{2}$), (${\rm g}_{3}$)
are almost equivalent to the conditions $({\rm aDec})_{q}^{\infty}$,
(${\rm A}1$-n), $({\rm aInc})_{p}$  in \cite{HarHastLee18}.}
\end{remark}

%
%

\section{Parabolic $\mathcal{B}_{1,g,\lambda}$ classes}


We consider a function $g:\mathbb{R}^{n}\times \mathbb{R}_{+} \times \mathbb{R}_{+}\rightarrow \mathbb{R}_{+}$
having the following properties:
for every ${\rm v}\in \mathbb{R}_{+}$ the function $g(\cdot,\cdot,{\rm v})$
is measurable on $\mathbb{R}^{n}\times \mathbb{R}_{+}$ and,
for almost every $(x,t)\in \mathbb{R}^{n}\times \mathbb{R}_{+}$,
the function $g(x,t,\cdot)$ is increasing and continuous in $\mathbb{R}_{+}$,
and
$$
\lim\limits_{{\rm v}\rightarrow +0}g(x,t,{\rm v})=0, \quad
\lim\limits_{{\rm v}\rightarrow +\infty}g(x,t,{\rm v})=+\infty.
$$


In addition, let $\lambda(r)$ be a continuous and non-increasing
function on the interval $(0,1)$.

Let $(x_{0},t_{0})\in \mathbb{R}^{n}\times \mathbb{R}_{+}$ and construct
the cylinder
$$
Q_{R,R}(x_{0},t_{0}):=
B_{R}(x_{0})\times (t_{0}-R,t_{0})\subset \mathbb{R}^{n}\times \mathbb{R}_{+}.
$$

\begin{definition}\label{defB1class}
{\rm 
We say that a measurable function $u: Q_{R,R}(x_{0},t_{0})\rightarrow \mathbb{R}$ belongs to the
parabolic class $\mathcal{B}_{1,g,\lambda}(Q_{R,R}(x_{0},t_{0}))$, if
$$
u\in C\big(t_{0}-R,t_{0}; L^{2}(B_{R}(x_{0}))\big)\cap
L^{1}\big( t_{0}-R,t_{0}; W^{1,1}(B_{R}(x_{0}))\big)
\cap L^{\infty}(Q_{R,R}(x_{0},t_{0})),
$$
${\rm ess}\!\!\!\!\!\!\sup\limits_{Q_{R,R}(x_{0},t_{0})}\!\!\!\!\!\!|u|\leqslant M$,
and there exist positive numbers $K_{1}$,
$c_{1}$, $c_{2}$, $c_{3}$, $c_{4}$ such that for 
any cylinder
$$Q_{8r,8\theta}(x_{0}, t_{0}):=B_{8r}(x_{0})\times (t_{0}-8\theta, t_{0})
\subset Q_{8r,8r}(x_{0},t_{0}) \subset Q_{R,R}(x_{0},t_{0}),
$$
any $k,l\in \mathbb{R}$,
$k<l$, $|k|,|l|<M$, any $\varepsilon\in(0,1]$, any $\sigma\in(0,1)$,
for any $\zeta(x)\in C_{0}^{\infty}(B_{r}(x_{0}))$,
$0\leqslant\zeta(x)\leqslant1$, $\zeta(x)=1$ in $B_{r(1-\sigma)}(x_{0})$,
$|\nabla \zeta|\leqslant (r\sigma)^{-1}$,
and for any
$\chi(t)\in C^{1}(\mathbb{R}_{+})$, $0\leqslant \chi(t)\leqslant1$, the following inequalities hold:
\begin{multline}\label{eqSkrnew2.2}
\iint\limits_{A^{+}_{k,r,\theta}\setminus A^{+}_{l,r,\theta}} |\nabla u|\,\zeta^{c_{1}}\chi \,dxdt\leqslant
K_{1}\frac{e^{c_{2}\lambda(r)}}{\varepsilon}\,\frac{M_{+}(k,r,\theta)}{r}
|A^{+}_{k,r,\theta}\setminus A^{+}_{l,r,\theta}|
\\
+\frac{K_{1}\sigma^{-c_{3}}\varepsilon^{c_{4}}}{g\left(x_{0},t_{0},\frac{M_{+}(k,r,\theta)}{r}\right)}
\bigg\{ \int\limits_{B_{r}(x_{0})\times \{t_{0}-\theta\}} (u-k)_{+}^{2}\zeta^{c_{1}}\chi(t_{0}-\theta)dx +
\iint\limits_{A^{+}_{k,r,\theta}}(u-k)_{+}^{2}\,|\chi_{t}|\,\zeta^{c_{1}} dxdt
\\
+  \iint\limits_{A^{+}_{k,r,\theta}} g\bigg(x,t,\frac{K_{1}(u-k)_{+}}
{\sigma r\zeta}\bigg) \frac{(u-k)_{+}}{r} \,\chi\,\zeta^{c_{1}-1}dxdt \bigg\},
\end{multline}
\begin{multline}\label{eqSkrnew2.3}
\iint\limits_{A^{-}_{l,r,\theta}\setminus A^{-}_{k,r,\theta}}
|\nabla u|\,\zeta^{c_{1}}\chi \,dxdt\leqslant
K_{1}\frac{e^{c_{2}\lambda(r)}}{\varepsilon}\,\frac{M_{-}(l,r,\theta)}{r}
|A^{-}_{l,r,\theta}\setminus A^{-}_{k,r,\theta}|
\\
+\frac{K_{1}\sigma^{-c_{3}}\varepsilon^{c_{4}}}{g\left(x_{0},t_{0},\frac{M_{-}(l,r,\theta)}{r}\right)}
\bigg\{ \int\limits_{B_{r}(x_{0})\times \{t_{0}-\theta\}} (u-l)_{-}^{2}\zeta^{c_{1}}\chi(t_{0}-\theta)dx +
\iint\limits_{A^{-}_{l,r,\theta}}(u-l)_{-}^{2}\,|\chi_{t}|\,\zeta^{c_{1}} dxdt
\\
+ \iint\limits_{A^{-}_{l,r,\theta}} g\bigg(x,t,\frac{K_{1}(u-l)_{-}}{\sigma r\zeta}\bigg)
 \frac{(u-l)_{-}}{r} \,\chi\,\zeta^{c_{1}-1}dxdt \bigg\},
\end{multline}
provided that $M_{+}(k,r,\theta)\geqslant r$, $M_{-}(l,r,\theta)\geqslant r$,
\begin{multline}\label{eqSkr2.4}
\int\limits_{B_{r}(x_{0})\times\{t\}} (u-k)_{\pm}^{2}\zeta^{c_{1}}\chi dx\leqslant
\int\limits_{B_{r}(x_{0})\times\{t_{0}-\theta\}} (u-k)_{\pm}^{2}\zeta^{c_{1}}\chi(t_{0}-\theta)dx
\\
+\frac{K_{1}}{\sigma^{c_{3}}} \bigg\{ \iint\limits_{A^{\pm}_{k,r,\theta}} (u-k)_{\pm}\, |\chi_{t}|\, \zeta^{c_{1}} dxdt +
\iint\limits_{A^{\pm}_{k,r,\theta}}
g\bigg( x,t, \frac{K_{1}(u-k)_{\pm}}{\sigma r\zeta} \bigg) \frac{(u-k)_{\pm}}{r}\,
 \chi\,\zeta^{c_{1}-1} dxdt\bigg\},
\end{multline}
for all $t\in (t_{0}-\theta,t_{0})$, where
$(u-k)_{\pm}:=\max\{\pm(u-k),\, 0\}$,
$$
M_{\pm}(k,r,\theta):={\rm ess}\!\!\!\!\!\!
\sup\limits_{Q_{r,\theta}(x_{0},t_{0})}\!\!\!\!\!\!
(u-k)_{\pm}, \quad
A^{\pm}_{k,r,\theta}:=Q_{r,\theta}(x_{0},t_{0})\cap \{(u-k)_{\pm}>0\}.
$$
}
\end{definition}

Further, we also assume that
\begin{itemize}

\item[(${\rm g}$)] there exists a positive constant $c_{5}$ such that,
for all $(x,t)\in Q_{R,R}(x_{0},t_{0})$,
\begin{equation}\label{eqSkrnew2.6}
c_{5}^{-1}\leqslant g(x,t,1)\leqslant c_{5}.
\end{equation}

\item[(${\rm g}_{1}$)]
there exist positive constants
$K$, $K_{2}$, $c_{6}$ such that, for any
$(x_{1}, t_{1})$, $(x_{2}, t_{2})\in Q_{r,Kr}(x_{0},t_{0})\subset Q_{R,R}(x_{0},t_{0})$
and for all ${\rm v}\in (r,M)$, there holds
\begin{equation}\label{eqSkrnew2.5}
g(x_{1}, t_{1},{\rm v}/r)\leqslant K_{2}e^{c_{6}\lambda(r)}g(x_{2}, t_{2}, {\rm v}/r);
\end{equation}

\end{itemize}


We will also suppose that the function
$$
\psi(x,t,{\rm v}):=\frac{g(x,t,{\rm v})}{{\rm v}},
\quad (x,t,{\rm v})\in \mathbb{R}^{n}\times \mathbb{R}_{+} \times \mathbb{R}_{+},
$$
satisfies one of the following conditions:
\begin{itemize}
\item[(${\rm g}_{2}\psi_{1}$)]
,,\textbf{degenerate}'' case:  with some $b_{0}$, $\delta\geqslant0$, $K$,
$c_{7}$, $c_{8}>0$, $0<\mu_{1}<\mu_{2}$ there hold
\begin{equation}\label{psib0cond}
\frac{\psi(x_{0},t_{0},{\rm w})}{\psi(x_{0},t_{0},{\rm v})}
\geqslant c_{7} \Big(\frac{{\rm w}}{{\rm v}}\Big)^{\mu_{1}}
\ \ \text{if} \ \ {\rm w}\geqslant {\rm v}>b_{0}R^{-\delta},
\hskip 43 mm
\end{equation}
\begin{equation}\label{eqSkr2.7}
\frac{\psi(x,t,{\rm w})}{\psi(x,t,{\rm v})}
\leqslant c_{8} \Big(\frac{{\rm w}}{{\rm v}}\Big)^{\mu_{2}}
\ \ \text{if} \ \ {\rm w}\geqslant {\rm v}>b_{0}R^{-\delta} \ \text{ and } \
(x,t)\in Q_{R,R}(x_{0},t_{0}),
\end{equation}
\end{itemize}
or
\begin{itemize}
\item[(${\rm g}_{2}\psi_{2}$)]
,,\textbf{singular}'' case:
with some $b_{0}$, $\delta\geqslant0$, $K$, $c_{9}$, $c_{10}>0$, $0<\mu_{3}<\mu_{4}<1$
there hold
\begin{equation}\label{eqeqSkr2.8mu4}
\frac{\psi(x_{0},t_{0},{\rm w})}{\psi(x_{0},t_{0},{\rm v})}
\geqslant c_{9} \Big(\frac{{\rm v}}{{\rm w}}\Big)^{\mu_{4}}
\ \ \text{if} \ \ {\rm w}\geqslant {\rm v}>b_{0}R^{-\delta},
\hskip 43mm
\end{equation}
\begin{equation}\label{eqeqSkr2.8mu3}
\frac{\psi(x,t,{\rm w})}{\psi(x,t,{\rm v})}
\leqslant c_{10} \Big(\frac{{\rm v}}{{\rm w}}\Big)^{\mu_{3}}
\ \ \text{if} \ \ {\rm w}\geqslant {\rm v}>b_{0}R^{-\delta} \ \text{ and } \
(x,t)\in Q_{R,R}(x_{0},t_{0}).
\end{equation}
In this case we additionally assume that for all $t\in (t_{0}-\theta, t_{0})$ and
$\varepsilon M_{\pm}(k,r,\theta)\geqslant r$
the following inequality holds:
\begin{equation}\label{eqSkr2.9}
\begin{aligned}
&D^{-}\int\limits_{B_{r}(x_{0})\times \{t\}} \Phi_{k}(x_{0}, t_{0},u)\,\frac{t-t_{0}+\theta}{\theta}\, \zeta^{c_{1}}dx
\\
&+\frac{r}{K_{1}} \int\limits_{B_{r}(x_{0})\times \{t\}}
\bigg|\nabla \ln \frac{(1+\varepsilon)M_{\pm}(k,r,\theta)}{w_{k}}\bigg|\,
\frac{t-t_{0}+\theta}{\theta}\, \zeta^{c_{1}}dx
\\
&\leqslant \frac{K_{1}}{\theta}\int\limits_{B_{r}(x_{0})\times \{t\}} \Phi_{k}(x_{0}, t_{0},u)\, \zeta^{c_{1}}dx
+\frac{K_{1}}{\sigma^{c_{3}}r}\int\limits_{B_{r}(x_{0})\times \{t\}}
\frac{g\bigg(x,t, \dfrac{K_{1}w_{k}}{\sigma r\zeta}\bigg)w_{k}}{G(x_{0},t_{0},w_{k}/r)}\, \zeta^{c_{1}-1}dx
\\
&+K_{1}e^{c_{2}\lambda(r)}|B_{r}(x_{0})|,
\end{aligned}
\end{equation}
where
$G(x,t,z):=\int\limits_{0}^{z}g(x,t,s)ds$, \
$w_{k}:=(1+\varepsilon)M_{\pm}(k,r,\theta)-(u-k)_{\pm}$,
$$
\Phi_{k}(x,t,u):=\int\limits_{0}^{(u-k)_{\pm}}
\frac{(1+\varepsilon)M_{\pm}(k,r,\theta)-s}{G\bigg(x,t, \dfrac{(1+\varepsilon)M_{\pm}(k,r,\theta)-s}{r}\bigg)}\,ds,
$$
and the notation $D^{-}$ is used to denote the derivative
$$
D^{-}f(t):=\limsup\limits_{h\rightarrow0}\frac{f(t)-f(t-h)}{h}.
$$
\end{itemize}

The parameters $n$, $K$, $K_{1}$, $K_{2}$, $c$, $c_{i}$, $i=1,\ldots, 10$, $\mu_{1}$, $\mu_{2}$, $\mu_{3}$, $\mu_{4}$, $M$ are the data, and we say
that a generic constant $\gamma$ depends upon the data, if it can be quantitatively determined
a priory only in terms of the indicated parameters.

Our main results of this Section are read as follows.

\begin{theorem}\label{th2.1}
Let $u\in \mathcal{B}_{1,g,\lambda}(Q_{R,R}(x_{0},t_{0}))$ and
let hypotheses $({\rm g})$, $({\rm g}_{1})$, $({\rm g}_{2}\psi_{1})$ be fulfilled.
Then there exist positive numbers $c$, $\beta$ depending only on the data such that if
\begin{equation}\label{eqSkr3.112505}
\exp\big(c\Lambda(\beta,r)\big)\leqslant \left(\frac{3}{2}\right)^{1-\delta_{0}}
\exp\big(c\Lambda(\beta,2r)\big), \quad
\Lambda(\beta,r):=\exp\big(\beta\lambda(r)\big)
\end{equation}
for all $0<r\leqslant R/2$ and with some $\delta_{0}\in(0,1)$, and
\begin{equation}\label{eqSkr3.122505}
\int\limits_{0} \exp\big(-c\Lambda(\beta,r) \big)\frac{dr}{r}=+\infty,
\quad \lim\limits_{r\rightarrow0} r^{1-\delta_{0}}\exp\big(c\Lambda(\beta,r) \big)=0,
\end{equation}
then $u$ is continuous at $(x_{0},t_{0})$.
\end{theorem}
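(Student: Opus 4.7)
The plan is to mirror the elliptic scheme of Section~2, but on intrinsic cylinders of DiBenedetto type adapted to the degenerate growth condition $({\rm g}_{2}\psi_{1})$. Fix a point $(x_0,t_0)$ and work in a cylinder $Q_{r,r}(x_0,t_0)$. Denote by $\mu^{\pm}_{r,\theta}$, $\omega_{r,\theta}$ the essential sup, inf, and oscillation of $u$ on $Q_{r,\theta}(x_0,t_0)$. The natural intrinsic height should be chosen so that the parabolic and elliptic scalings balance; guided by the gradient term on the left of \eqref{eqSkrnew2.2}--\eqref{eqSkrnew2.3} and the $\psi$-quotient in \eqref{psib0cond}, I would take
\[
\theta := \frac{r}{g(x_0,t_0,\omega_r/r)}\cdot\frac{\omega_r}{r},
\]
where $\omega_r$ is a current candidate for the oscillation on $B_r(x_0)$ across a time-strip of length $\sim r$. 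Under this choice, $M_{\pm}(k,r,\theta)$ is comparable to $\omega_r$ when $k$ sits at half-height, and powers of $g(x_0,t_0,M_{\pm}/r)$ appearing in the denominators of \eqref{eqSkrnew2.2}--\eqref{eqSkr2.4} cancel against the integrands via $({\rm g}_{2}\psi_{1})$, leaving only multiplicative factors of $e^{c\lambda(r)}$ to be tracked.

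The first ingredient is a parabolic De Giorgi lemma: there is $\nu_1>0$ such that if
\[
\bigl|\{(x,t)\in Q_{r,\theta}(x_0,t_0): u<\mu^{-}_{r,\theta}+\xi\omega_r\}\bigr|\leqslant \nu_1 e^{-c\lambda(r)}|Q_{r,\theta}(x_0,t_0)|,
\]
then either $a\xi\omega_r\leqslant (1+b_0R^{-\delta})r$ or $u\geqslant \mu^{-}_{r,\theta}+a\xi\omega_r$ on $Q_{r/2,\theta/2}$. I would follow the template of Lemma~\ref{DGellem2.3}: set $k_j=\mu^{-}+a\xi\omega_r+(1-a)\xi\omega_r 2^{-j}$, $r_j=\tfrac r2(1+2^{-j})$, with shrinking cylinders $Q_j=B_{r_j}\times(t_0-\theta_j,t_0)$, apply the parabolic Sobolev embedding to $(u-k_{j+1})_{-}\zeta_j^{c_1}\chi_j$ using the sup-control coming from \eqref{eqSkr2.4} and the gradient integral controlled by \eqref{eqSkrnew2.3}. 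Pointwise bounds on $g(x,\cdot,M_{-}/(r\zeta))$ are reduced to $g(x_0,t_0,\omega_r/r)$ via the composition of $({\rm g}_{1})$ and the $\mu_2$-doubling in \eqref{eqSkr2.7}, picking up a factor $2^{j\gamma}$ that is absorbed into the iteration constant. The resulting recursion $y_{j+1}\leqslant C b^j e^{c\lambda(r)} y_j^{1+1/(n+2)}$ is closed off by Lemma~\ref{lemonyj}, with the smallness threshold carrying the $e^{-c\lambda(r)}$ factor.

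Next I would prove a parabolic expansion of positivity replacing Lemma~\ref{DGellem2.4}: starting from a single time slice on which $|\{u<\mu^{-}_r+\xi\omega_r\}\cap B_r|\leqslant(1-\alpha)|B_r|$, use the time-slice Caccioppoli \eqref{eqSkr2.4} to propagate this measure information forward over an intrinsic time-interval of length $\sim\theta$ (losing another $e^{c\lambda(r)}$ factor), then apply a De Giorgi--Poincaré / level-sum argument analogous to the passage from \eqref{LadUr6.22}--\eqref{LadUr6.23} to \eqref{estAkjastel}. This yields either $\omega_r\leqslant 2^{j_*}(1+s_0)r$ or a small-measure conclusion that feeds the De Giorgi lemma, where the depth of descent satisfies $j_*\simeq\exp(\beta\lambda(r))$. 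An oscillation decay of the form
\[
\omega_{r/2,\theta/2}\leqslant \bigl[1-\exp(-c\Lambda(\beta,r))\bigr]\omega_{r,\theta}+\gamma(1+b_0R^{-\delta})r\exp(c\Lambda(\beta,r))
\]
then follows after dichotomizing on whether $u>\mu^+-\omega/2$ or $u<\mu^-+\omega/2$ occupies more than half of $Q_{r,\theta}$.

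Finally, iteration follows the end of the proof of Theorem~\ref{elliptth2.1}: over the geometric sequence $r_j=2^{-j}r$, the product of $[1-\exp(-c\Lambda(\beta,r_j))]$ decays like $\exp(-\sum_j e^{-c\Lambda(\beta,r_j)})$, which under \eqref{eqSkr3.112505}--\eqref{eqSkr3.122505} tends to zero, while the additive remainder $r\exp(c\Lambda(\beta,r))$ vanishes by the second half of \eqref{eqSkr3.122505}. The principal obstacle, and the reason the condition \eqref{eqSkr3.112505} appears with the exponent $1-\delta_0$ rather than exactly $1$, is reconciling the intrinsic time-scaling with the iteration: each passage between an intrinsic cylinder $Q_{r,\theta(r)}$ and a standard cylinder $Q_{r,r}$ introduces ratios $\theta(r)/r$ that depend on $g(x_0,t_0,\omega_r/r)$, and the comparison of these ratios across scales $r\mapsto r/2$ requires the slack $\delta_0$ to absorb the polynomial-in-$\omega$ factors produced by $({\rm g}_{2}\psi_{1})$. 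Ensuring that these factors stay compatible with the $e^{c\Lambda(\beta,r)}$ control is the delicate bookkeeping on which the theorem hinges.
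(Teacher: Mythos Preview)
Your overall architecture---intrinsic cylinders, a parabolic De\,Giorgi lemma, expansion of positivity, iteration producing $\prod_j[1-\exp(-c\Lambda(\beta,r_j))]$---matches the paper. The gap is in the dichotomy. You propose to split on whether $\{u>\mu^+-\omega/2\}$ or $\{u<\mu^-+\omega/2\}$ occupies more than half of the single cylinder $Q_{r,\theta}$; that is the elliptic (and, in this paper, the singular) alternative, but it does not close in the degenerate case. From space--time measure $\leqslant\tfrac12|Q_{r,\theta}|$ you can extract one time slice with spatial measure $\leqslant\tfrac12|B_r|$, but you have no control on where that slice sits in $(t_0-\theta,t_0)$, so after propagating forward you may fail to cover a sub-cylinder with top at $t_0$, and the iteration stalls.

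The paper uses the classical DiBenedetto two-scale mechanism instead: a large cylinder $Q_{\rho,\theta_*}$ with $\theta_*=K_2\rho^2/\psi(x_0,t_0,\omega/2^{s_*+2}\rho)$ is filled by translates of a small cylinder $Q_{\rho,\eta}$ with $\eta=K_2\rho^2/(4\psi(x_0,t_0,\omega/4\rho))$. The alternative is whether \emph{some} small cylinder has $|\{u\leqslant\mu^-+\omega/2\}|\leqslant\bar\nu e^{-c\lambda(\rho)}|Q_{\rho,\eta}|$ (so the De\,Giorgi lemma fires directly on it, and \eqref{eqSkr2.4} with $({\rm g}_2\psi_1)$ pushes the pointwise bound from its top slice to $t_0$), or whether \emph{every} small cylinder has $|\{u\geqslant\mu^+-\omega/2\}|\leqslant(1-\bar\nu e^{-c\lambda(\rho)})|Q_{\rho,\eta}|$. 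In the second alternative one must (i) extract a good time $\tilde t$ by a Chebyshev argument, (ii) propagate it forward via \eqref{eqSkr2.4} while dropping to level $2^{-s_1}\omega$, and (iii) level-sum over $s=s_1+1,\dots,s_*$ using \eqref{eqSkrnew2.2} with $\varepsilon$ a power of the layer measure together with the spatial De\,Giorgi--Poincar\'e inequality; only then does the De\,Giorgi lemma apply on the \emph{full} $Q_{\rho,\theta_*}$. The parameters $s_1,s_*,s^*$ are determined in a chain, each absorbing factors $e^{c\lambda(\rho)}$, and give $2^{s^*}\leqslant\exp(\gamma\Lambda(\beta,\rho))$. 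Your sketch has step~(iii) in spirit but neither the two-scale alternative nor steps~(i)--(ii); without them the hard alternative cannot be closed. Incidentally, the exponent $1-\delta_0$ in \eqref{eqSkr3.112505} comes from the threshold $\omega\geqslant 2^{s^*+2}(1+b_0)\rho^{1-\delta/\bar\delta}$ with $\bar\delta>\delta/\delta_0$, i.e.\ from the $b_0R^{-\delta}$ cutoff in $({\rm g}_2\psi_1)$, rather than from comparing intrinsic and standard cylinders in the iteration.
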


\begin{theorem}\label{th2.2}
Let $u\in \mathcal{B}_{1,g,\lambda}(Q_{R,R}(x_{0},t_{0}))$ and
let hypotheses $({\rm g})$, $({\rm g}_{1})$, $({\rm g}_{2}\psi_{2})$ be fulfilled.
Then there exist positive numbers $c$, $\bar{c}$, $\beta$ depending only on the data such that if
\begin{equation}\label{eqSkr3.132505}
\exp\big(8\Lambda_{1}(c,\beta,r)\big)\leqslant \left(\frac{3}{2}\right)^{1-\delta_{0}}
\exp\big(8\Lambda_{1}(c,\beta,2r)\big), \quad
\Lambda_{1}(c,\beta,r):=\exp\big( c\Lambda(\beta,r) \big)
\end{equation}
for all $0<r\leqslant R/2$ and with some $\delta_{0}\in (0,1)$, and
\begin{equation}\label{eqSkr3.142505}
\lim\limits_{r\rightarrow0}r^{1-\delta_{0}}\exp\big( 8\Lambda_{1}(c,\beta,r) \big)=0
\end{equation}
and
\begin{equation}\label{eqSkr3.152505}
\sum\limits_{i=0}^{\infty} \exp\big(-8\Lambda_{1}(c,\beta,r_{i}) \big)=+\infty,
\ \ r_{i}:=\bar{c}\,r_{i-1}
\exp\big( -8 \Lambda_{1}(c,\beta,r_{i-1}) \big),
\ \ r_{0}=\rho<R,
\end{equation}
then $u$ is continuous at $(x_{0},t_{0})$.
\end{theorem}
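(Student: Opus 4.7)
\emph{Proof sketch.} The plan is to follow a DiBenedetto-style intrinsic-scaling scheme: build a nested family of cylinders $Q_{r_i,\theta_i}(x_0,t_0)$ whose time heights $\theta_i$ are calibrated to the current oscillation of $u$, and show that the oscillation decays to zero along this family. In the singular range $\mu_3<\mu_4<1$ the direct De\,Giorgi covering used in Theorem~\ref{th2.1} is not available; the auxiliary logarithmic inequality~\eqref{eqSkr2.9} will play the role of transporting pointwise information from a single time slice to the whole cylinder. At the $i$-th step I set $\omega_i:=\operatorname{osc}\{u;Q_{r_i,\theta_i}\}$ and choose $\theta_i$ so that $\theta_i\, g(x_0,t_0,\omega_i/r_i)\,\omega_i/r_i\sim 1$; the growth constraint~\eqref{eqSkr3.132505} is exactly what keeps consecutive intrinsic cylinders nested once the radii evolve as $r_{i+1}=\bar c\, r_i\exp(-8\Lambda_1(c,\beta,r_i))$ as in~\eqref{eqSkr3.152505}. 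A measure-theoretic dichotomy then applies in the top half of $Q_{r_i,\theta_i}$: either $\{u<\mu_{r_i}^- +\omega_i/2\}$ or $\{u>\mu_{r_i}^+-\omega_i/2\}$ occupies at most half of the cylinder, and if at some stage $\omega_i\le r_i^{1-\delta_0}\exp(c\Lambda(\beta,r_i))$ the continuity of $u$ at $(x_0,t_0)$ already follows from~\eqref{eqSkr3.142505}; otherwise we iterate.

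The first ingredient is a parabolic De\,Giorgi lemma analogous to Lemma~\ref{DGellem2.3}. Combining the energy identity~\eqref{eqSkr2.4} with~\eqref{eqSkrnew2.3} and the Sobolev embedding applied to $(\max\{u,k_{j+1}\}-k_j)_-\zeta^{c_1}$ yields the recursion $y_{j+1}\le \gamma b^j y_j^{1+1/n}$, to which Lemma~\ref{lemonyj} applies. The intrinsic choice of $\theta$, together with $({\rm g})$, $({\rm g}_1)$ and the singular $\psi$-bound $({\rm g}_2\psi_2)$, factors out the non-homogeneity of $g$ and delivers the statement: there exists $\nu_1\in(0,1)$ depending only on the data such that if $|A^-_{k,r,\theta}|\le\nu_1 e^{-c_2 n\lambda(r)}|Q_{r,\theta}|$, then $u\ge\mu_r^-+a\xi\omega/2$ on a concentric subcylinder, unless $a\xi\omega\le(1+s_0)r$.

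The central and most delicate step is the singular-case expansion of positivity. Testing~\eqref{eqSkr2.9} with $w_k=(1+\varepsilon)M_\pm(k,r,\theta)-(u-k)_\pm$ and a purely spatial cutoff $\zeta$ yields a differential inequality for the entropy-type functional $\int_{B_r(x_0)\times\{t\}}\Phi_k\,\zeta^{c_1}\,dx$; integrating in time propagates a single-slice measure lower bound $|\{u(\cdot,\bar t)>\mu_r^-+\xi\omega_r\}|\ge\alpha|B_r|$ into a uniform-in-time measure bound on $(\bar t,t_0)$. The extra term $K_1 e^{c_2\lambda(r)}|B_r(x_0)|$ on the right of~\eqref{eqSkr2.9} is absorbed via~\eqref{eqSkr3.132505}, and the non-autonomy of $g$ is absorbed through $({\rm g}_1)$ and $({\rm g}_2\psi_2)$. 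Feeding this uniform measure bound back into the De\,Giorgi lemma of the previous paragraph produces an expansion-of-positivity statement analogous to Lemma~\ref{DGellem2.4}: $u\ge\mu_r^-+\omega_r/2^{j_*+1}$ on a subcylinder, with $j_*\sim \exp(\beta\lambda(r/4))$, unless $\omega_r\le 2^{j_*}(1+s_0)r$. I expect this step to be the main obstacle: the precise bookkeeping of the $\varepsilon$-powers in~\eqref{eqSkr2.9} and the fact that in the singular regime the intrinsic time scale shrinks faster than the spatial radius together force the doubly-exponential profile $\Lambda_1=\exp(c\Lambda(\beta,\cdot))$ and the implicit radius recursion~\eqref{eqSkr3.152505}.

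Combining the three ingredients yields the oscillation recursion
\[
\omega_{i+1} \le \bigl(1-\exp(-8\Lambda_1(c,\beta,r_i))\bigr)\,\omega_i
+ \gamma(1+s_0)\,r_i^{1-\delta_0}\exp\bigl(8\Lambda_1(c,\beta,r_i)\bigr),
\]
whose infinite product is bounded above by $\exp\bigl(-\sum_i\exp(-8\Lambda_1(c,\beta,r_i))\bigr)$. The divergence condition in~\eqref{eqSkr3.152505} drives this product to zero, while the residual error term vanishes along $r_i\to 0$ by~\eqref{eqSkr3.142505}. Consequently $\omega_i\to 0$, which gives the continuity of $u$ at $(x_0,t_0)$.
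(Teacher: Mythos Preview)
Your overall architecture---intrinsic cylinders, a parabolic De\,Giorgi lemma, a logarithmic/entropy estimate replacing the covering argument, and the final oscillation recursion with the doubly-exponential weight $\Lambda_1$---matches the paper, and your endgame recursion and its handling via \eqref{eqSkr3.132505}--\eqref{eqSkr3.152505} is essentially what the paper does. But two points in the middle are misdescribed in a way that would leave real gaps if you tried to carry the argument out.

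First, the dichotomy in the singular case is \emph{not} taken over the top half of the cylinder; it is taken on a single time slice $t=t_0-\theta$ (see \eqref{firstalterpsi2}). From that slice one first uses the \emph{energy} inequality \eqref{eqSkr2.4} (not \eqref{eqSkr2.9}) to propagate a coarse measure bound to all later times: this is the content of Lemma~\ref{lemaoneta}, which produces a fixed $\eta\in(0,\tfrac12)$ depending only on the data with $|\{u(\cdot,t)\le\mu^-+\eta\omega\}|\le\tfrac78|B_\rho|$ for all $t\in(t_0-\theta,t_0)$. You attribute this time-propagation step to \eqref{eqSkr2.9}, but it comes from \eqref{eqSkr2.4}; without it you have no uniform-in-time information to feed into the logarithmic estimate.

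Second, the use of \eqref{eqSkr2.9} is not ``integrating in time.'' The paper takes a geometric sequence of levels $k_j=\mu^-+\varepsilon^{j}\omega$, sets $y_j=\sup_{t}Y_j(t)$ with $Y_j(t)$ the (weighted) measure of $\{u(\cdot,t)<\varepsilon^j\omega\}$, and evaluates \eqref{eqSkr2.9} \emph{pointwise} at the time $\bar t$ where $Y_{j+1}$ attains its supremum. If $D^-\!\int\Phi_{k_j}\ge 0$ there, the logarithmic term alone forces $y_{j+1}\ln\tfrac{1}{2\varepsilon}\le\gamma e^{c_0\lambda(\rho)}$. If $D^-\!\int\Phi_{k_j}<0$, one introduces the last time $t_*\le\bar t$ where $D^-\!\int\Phi_{k_j}\ge 0$, compares $I(\bar t)\le I(t_*)$, and uses the two elementary integral inequalities \eqref{eqSkrnew6.8}--\eqref{eqSkrnew6.9} to deduce $y_{j+1}\le\bigl(1-\sigma_0(\rho)\bigr)y_j$ with $\sigma_0(\rho)=\tfrac14\exp\bigl(-\tfrac{4\gamma}{\nu}e^{c_0(n+2)\lambda(\rho)}\bigr)$. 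Iterating this contraction in $j$ up to $j_*$ and then estimating $\varepsilon^{j_*}$ from below is precisely what produces the doubly-exponential scale $\Lambda_1$ and hence the radius recursion \eqref{eqSkr3.152505}. Your sketch has the right output but not this mechanism; in particular the formula $j_*\sim\exp(\beta\lambda(r/4))$ you quote is the \emph{elliptic} one from Lemma~\ref{DGellem2.4} and is one exponential short of what the singular parabolic argument actually yields.
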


\begin{remark}
{\rm We note that the function $\lambda(r)=L\ln\ln\ln \dfrac{1}{r}$
satisfies conditions \eqref{eqSkr3.112505} and \eqref{eqSkr3.122505}
if $0<L<1/\beta$. We also note that the function
$\lambda(r)=L\ln\ln\frac{1}{8}\ln\ln\ln\ln \dfrac{1}{r}$
satisfies conditions \eqref{eqSkr3.132505}--\eqref{eqSkr3.152505}
if $0<L<1/\beta$. Indeed, to prove condition \eqref{eqSkr3.152505} we note
that by \eqref{eqSkr3.142505}
$$
r_{1}=\bar{c}\,\rho \exp\big(-8 \Lambda_{1}(c,\beta,\rho)\big)
\geqslant \rho^{\gamma_{0}}, \quad \gamma_{0}:=2-\delta_{0},
$$
if $\rho$ is sufficiently small, so $r_{j}\geqslant\rho^{\gamma_{0}^{j}}$,
$j=1,2,\ldots$, and hence
$$
j\geqslant \frac{1}{\ln\gamma_{0}} \ln \frac{\ln \frac{1}{r_{j}} }{\ln \frac{1}{\rho}}.
$$
This implies
\begin{multline*}
\sum\limits_{i=0}^{j} \exp\big(-8 \Lambda_{1}(c,\beta,r_{i}) \big)
\geqslant j \exp\big(-8 \Lambda_{1}(c,\beta,r_{j}) \big)
\\
\geqslant \frac{1}{\ln\gamma_{0}} \ln \frac{\ln \frac{1}{r_{j}} }{\ln \frac{1}{\rho}}
\left( \ln\ln\ln \frac{1}{r_{j}} \right)^{-1}\rightarrow +\infty
\quad \text{if} \ j\rightarrow +\infty.
\end{multline*}
}
\end{remark}

We note that in the parabolic case, the statements of Theorems
\ref{th2.1} and \ref{th2.2} differ from the elliptic case (Theorem \ref{elliptth2.1}).
The following examples show that additional conditions on the function $\psi$ arrives naturally.

\begin{example}\label{exmpl3.1}
{\rm The function
$$
g_{1}(x,t, {\rm v})={\rm v}^{p(x,t)-1}+{\rm v}^{q(x,t)-1}, \quad
(x,t, {\rm v})\in \mathbb{R}^{n}\times \mathbb{R}_{+} \times \mathbb{R}_{+},
$$
satisfies conditions
$({\rm g})$, $({\rm g}_{1})$  if
$1<p\leqslant p(x,t)\leqslant q(x,t)\leqslant q$ and
$$
\text{osc}\{p(x,t); Q_{r,r^{p^{-}}}(x_{0},t_{0})\}+
\text{osc}\{q(x,t); Q_{r,r^{p^{-}}}(x_{0},t_{0})\}
\leqslant
\frac{\lambda(r)}{\ln r^{-1}}, \quad p^{-}=\min\{2,p\}.
$$
Moreover, the function $g_{1}(x,t, {\rm v})$ satisfies condition  (${\rm g}_{2}\psi_{1}$)
with $\mu_{1}=p-2$, $\mu_{2}=q-2$, $\delta=0$ and $b_{0}=0$, if $p>2$.
In addition, if $1<p\leqslant p(x,t)\leqslant p_{1}<2<q_{1}\leqslant q(x,t)$
then the function $g_{1}(x,t, {\rm v})$ satisfies (${\rm g}_{2}\psi_{1}$) with
$\mu_{1}=(q_{1}-2)/2$, $\mu_{2}=q-2$, $\delta=0$ and
$b_{0}=\max \left\{1, \big(\frac{2-p+\mu_{1}}{q_{1}-2+\mu_{1}}\big)
^{\frac{1}{q_{1}-p_{1}}} \right\}$.
Finally, if $1<p\leqslant p(x,t)\leqslant q(x,t)\leqslant q<2$,
then the function $g_{1}(x,t, {\rm v})$ satisfies condition  (${\rm g}_{2}\psi_{2}$)
with $\mu_{3}=2-q$, $\mu_{4}=2-p$, $\delta=0$ and $b_{0}=0$.}
\end{example}
\begin{example}\label{exmpl3.3}
{\rm The function
$$
g_{2}(x,t, {\rm v})={\rm v}^{p(x,t)-1}\big( 1+\ln(1+{\rm v}) \big),
\quad (x,t, {\rm v})\in \mathbb{R}^{n}\times \mathbb{R}_{+}\times \mathbb{R}_{+},
$$
satisfies conditions $({\rm g})$, $({\rm g}_{1})$  if
$1<p\leqslant p(x,t)\leqslant p_{1}$ and
$$
\text{osc}\{p(x,t); Q_{r,r^{p^{-}}}(x_{0},t_{0})\}\leqslant \frac{\lambda(r)}{\ln r^{-1}}.
$$
If $p>2$ it satisfies condition (${\rm g}_{2}\psi_{1}$) with $\mu_{1}=p-2$, $\mu_{2}=p-1$, $\delta=0$
and $b_{0}=0$. If $p_{1}<2$ then the function $g_{3}(x,t, {\rm v})$
satisfies condition (${\rm g}_{2}\psi_{2}$) with $\mu_{3}=\dfrac{2-p}{2}$, $\mu_{4}=2-p$,
$\delta=0$ and $b_{0}=e^{\frac{2}{2-p}}-1$.
}
\end{example}
\begin{example}\label{exmpl3.2}
{\rm The function
$$
g_{3}(x,t, {\rm v})={\rm v}^{p-1}+a(x,t){\rm v}^{q-1}, \quad
(x,t, {\rm v})\in \mathbb{R}^{n}\times \mathbb{R}_{+}\times \mathbb{R}_{+},
$$
satisfies conditions $({\rm g})$, $({\rm g}_{1})$ if
$a(x,t)\geqslant0$, $1<p<q\leqslant p+\alpha$, $0<\alpha\leqslant1$,
$$
\text{osc}\{a(x,t); Q_{r,r^{p^{-}}}(x_{0},t_{0})\}
\leqslant
a_{0}r^{\alpha}e^{\lambda(r)} \ \ \text{and}
\ \ \lim\limits_{r\rightarrow0} r^{\alpha}e^{\lambda(r)}=0.
$$
In addition, if $p>2$ then it satisfies condition
(${\rm g}_{2}\psi_{1}$) with $\mu_{1}=p-2$, $\mu_{2}=q-2$, $\delta=0$
and $b_{0}=0$. In the case $q<2$ the function $g_{3}(x,t, {\rm v})$
satisfies hypothesis (${\rm g}_{2}\psi_{2}$) with $\mu_{3}=2-q$, $\mu_{4}=2-p$,
$\delta=0$ and $b_{0}=0$. Moreover, if $p<2<q$ and $a(x_{0},t_{0})=0$
then $g_{3}(x,t, {\rm v})$ satisfies \eqref{eqeqSkr2.8mu4} with
$\mu_{4}=2-p$. To check \eqref{eqeqSkr2.8mu3} we need
to obtain the inequality
$$
(p-2)u^{p-2}+(q-2)a(x,t)u^{q-2}\leqslant
-\frac{2-p}{2}(u^{p-2}+a(x,t)u^{q-2})
\ \ \text{for} \ (x,t)\in Q_{R,R}(x_{0},t_{0}),
$$
or the same
$$
a(x,t)\left(q-1-\frac{p}{2}\right)u^{q-p}\leqslant \frac{(2-p)}{2},
$$
this inequality will be fulfilled if $R$ is so small that
$$
a_{0}\left(q-1-\frac{p}{2}\right)R^{\alpha}e^{\lambda(R)}M^{q-p}\leqslant  \frac{2-p}{2},
$$
which yields \eqref{eqeqSkr2.8mu3} with $\mu_{3}=\dfrac{2-p}{2}$, $\delta=0$
and $b_{0}=0$.

Finally, if $2<p<q$ and $a(x_{0},t_{0})>0$, then 
the function $g_{3}(x,t, {\rm v})$ satisfies \eqref{eqSkr2.7}
with $\mu_{2}=q-2$. To check \eqref{psib0cond} we need to obtain the
inequality
$$
(p-2)u^{p-2}+(q-2)a(x_{0},t_{0})u^{q-2}\geqslant
\frac{q-2}{2}(u^{p-2}+a(x_{0},t_{0})u^{q-2}),
$$
or the same
$$
\frac{q-2}{2}\,a(x_{0},t_{0})u^{q-p}
\geqslant \frac{q}{2}+1-p.
$$
Choose $R$ from the condition
$a_{0}R^{\alpha} e^{\lambda(R)}=\frac{1}{2}a(x_{0},t_{0})$,
which yields \eqref{psib0cond} with
$$
\mu_{1}=\frac{q-2}{2}, \ \ \delta=\frac{\alpha}{q-p} \ \
\text{and} \ \
b_{0}=\bigg(\frac{2}{a_{0}}\, \frac{q+2-2p}{q-2}\bigg)^{\frac{1}{q-p}}.
$$
Note that this choice of $R$ guarantees that
$$
\frac{1}{2}a(x_{0},t_{0})\leqslant a(x,t)\leqslant \frac{3}{2}a(x_{0},t_{0})
\ \ \text{for all} \ (x,t)\in Q_{R,R}(x_{0},t_{0}).
$$
}
\end{example}
\begin{example}\label{exmpl3.4}
{\rm The function
$$
g_{4}(x,t, {\rm v})={\rm v}^{p-1}\big( 1+b(x,t)\ln(1+{\rm v}) \big),
\quad (x,t, {\rm v})\in \mathbb{R}^{n}\times\mathbb{R}_{+}\times\mathbb{R}_{+},
$$
satisfies conditions $({\rm g})$, $({\rm g}_{1})$ if
$p>1$, $b(x,t)\geqslant0$,
$$
\text{osc}\{b(x,t); Q_{r,r^{p^{-}}}(x_{0},t_{0})\}\leqslant \frac{Be^{\lambda(r)}}{\ln r^{-1}}
\ \ \text{and} \ \ \lim\limits_{r\rightarrow0}\frac{e^{\lambda(r)}}{\ln r^{-1}}=0.
$$

In addition, if $p>2$ then it satisfies condition (${\rm g}_{2}\psi_{1}$) with
$\mu_{1}=p-2$, $\mu_{2}=p-1$, $\delta=0$ and $b_{0}=0$.
Moreover, if $p<2$ then the function $g_{4}(x,t, {\rm v})$ satisfies
condition (${\rm g}_{2}\psi_{2}$) with
$$
\mu_{3}=\frac{2-p}{2}, \ \ \mu_{4}=2-p, \ \ \delta=0 \ \ \text{and}
\ \ b_{0}=e^{\frac{2}{2-p}}-1.
$$
}
\end{example}

\section{Parabolic equations with generalized Orlicz growth}

We consider the equation
\begin{equation}\label{g-growtheq}
u_{t}-{\rm div}\bigg( g(x,t,|\nabla u|)\frac{\nabla u}{|\nabla u|} \bigg)=0,
\quad (x,t)\in \Omega_{T},
\end{equation}
where $\Omega_{T}:= \Omega\times (0,T)$,  $\Omega$ is a bounded domain in $\mathbb{R}^{n}$
and $0<T<+\infty$.

Additionally, we assume that
\begin{itemize}
\item[(${\rm g}_{3}$)]
there exists $\mu>0$ such that,
for a.a. $(x,t)\in \Omega_{T}$ and for ${\rm w}\geqslant{\rm v}>0$,
$$
\frac{g(x, t, {\rm w})}{g(x, t, {\rm v})}\geqslant
\left( \frac{{\rm w}}{{\rm v}} \right)^{\mu}.
$$
\end{itemize}

We set $\mathcal{G}(x,t, {\rm v}):= g(x,t, {\rm v}){\rm v}$ for
$(x,t, {\rm v})\in \Omega_{T}\times (0,+\infty)$
and write $W^{1,\mathcal{G}}(\Omega_{T})$ for a class of functions $u: \Omega_{T}\rightarrow\mathbb{R}$
satisfying
$
\iint\limits_{\Omega_{T}} \mathcal{G}(x,t,|\nabla u|)\, dxdt<+\infty.
$
By $W^{1,\mathcal{G}}_{0}(\Omega_{T})$
we denote the set of functions $u\in W^{1,\mathcal{G}}(\Omega_{T})$ with the property
that, for every $t\in (0,T)$, the function $u(\cdot, t)$ has a compact support in $\Omega$.

We say that a measurable function $u: \Omega_{T}\rightarrow\mathbb{R}$ is a bounded weak
solution of Eq. \eqref{g-growtheq} in $\Omega_{T}$ if
$u\in C_{{\rm loc}}\big( 0,T; L^{2}_{{\rm loc}}(\Omega) \big)
\cap W^{1,\mathcal{G}}_{{\rm loc}}(\Omega_{T}) \cap  L^{\infty}(\Omega_{T})$,
$u_{t}\in L^{2}_{{\rm loc}}(\Omega_{T})$,
and for every subinterval $(t_{1}, t_{2})\subset (0,T)$,
the following equality holds:
\begin{equation}\label{parabintid}
\int\limits_{\Omega}u\varphi\,dx\bigg|_{t_{1}}^{t_{2}}+\int\limits_{t_{1}}^{t_{2}}\int\limits_{\Omega}
\bigg\{-u\varphi_{t}+g(x,t, |\nabla u|)\frac{\nabla u}{|\nabla u|}\nabla \varphi \bigg\}\, dxdt=0
\end{equation}
for any
$\varphi\in L^{\infty}(\Omega_{T})\cap W_{0}^{1,\mathcal{G}}(\Omega_{T})$,
$\varphi_{t}\in L^{2}(\Omega_{T})$.

\begin{remark}
{\rm We note that in the case when $g$ independent of $t$, condition
$u_{t}\in L^{2}_{{\rm loc}}(\Omega_{T})$ can be dropped. In this case
integral identity \eqref{parabintid} can be rewritten in terms of the Steklov averages.
}
\end{remark}

Testing identity \eqref{parabintid} by $\varphi=(u-k)_{\pm}\zeta^{c_{1}}(x)\chi(t)$,
where $\zeta$, $\chi$ are the same as in \eqref{eqSkrnew2.2}--\eqref{eqSkr2.4},
using the Young inequality and condition (${\rm g}_{1}$) similarly to that of
Section \ref{sectellB1cl}, we arrive at \eqref{eqSkrnew2.2}--\eqref{eqSkr2.4}.

Testing \eqref{parabintid} by
$$
\varphi=\frac{w_{k}\,\zeta^{c_{1}}}{G(x_{0},t_{0},w_{k}/r)}\,
\frac{t-t_{0}+\theta}{\theta},
$$
we obtain for all $t\in(t_{0}-\theta,t_{0})$
\begin{multline*}
\frac{\partial}{\partial t}
\int\limits_{B_{r}(x_{0})\times\{t\}}
\Phi_{k}(x_{0},t_{0},u)\,\frac{t-t_{0}+\theta}{\theta}\,
\zeta^{c_{1}}dx
\\
+
\int\limits_{B_{r}(x_{0})\times\{t\}}
\frac{\mathcal{G}(x,t,|\nabla(u-k)_{\pm}|)}{G(x_{0},t_{0},w_{k}/r)}
\left( \frac{\mathcal{G}(x_{0},t_{0},w_{k}/r)}{G(x_{0},t_{0},w_{k}/r)}-1 \right)
\,\frac{t-t_{0}+\theta}{\theta}\,
\zeta^{c_{1}}dx
\\
\leqslant \frac{\gamma}{\theta}
\int\limits_{B_{r}(x_{0})\times\{t\}}
\Phi_{k}(x_{0},t_{0},u)\,\zeta^{c_{1}}dx
+
\gamma c_{1} \int\limits_{B_{r}(x_{0})\times\{t\}}
\frac{g(x,t,|\nabla(u-k)_{\pm}|)w_{k}}{G(x_{0},t_{0},w_{k}/r)}\,
\frac{t-t_{0}+\theta}{\theta}\,|\nabla\zeta|\zeta^{c_{1}-1}dx.
\\
\end{multline*}
Condition (${\rm g}_{3}$) implies
$$
G(x_{0},t_{0},{\rm w})\leqslant \frac{1}{1+\mu}\,
g(x_{0},t_{0},{\rm w})\,{\rm w}
\ \ \text{for} \ {\rm w}>0,
$$
hence from the previous, by the Young inequality, we have
\begin{multline*}
\frac{\partial}{\partial t}
\int\limits_{B_{r}(x_{0})\times\{t\}}
\Phi_{k}(x_{0},t_{0},u)\,\frac{t-t_{0}+\theta}{\theta}\,
\zeta^{c_{1}}dx+\frac{\mu}{2}\int\limits_{B_{r}(x_{0})\times\{t\}}
\frac{\mathcal{G}(x,t,|\nabla(u-k)_{\pm}|)}{G(x_{0},t_{0},w_{k}/r)}
\,\frac{t-t_{0}+\theta}{\theta}\,
\zeta^{c_{1}}dx
\\
\leqslant \frac{\gamma}{\theta}
\int\limits_{B_{r}(x_{0})\times\{t\}}
\Phi_{k}(x_{0},t_{0},u)\,\zeta^{c_{1}}dx+
\frac{\gamma}{\sigma^{\gamma}r}\int\limits_{B_{r}(x_{0})\times\{t\}}
\frac{g(x,t,\frac{\gamma w_{k}}{\sigma r\zeta})}{G(x_{0},t_{0},w_{k}/r)}\,
\zeta^{c_{1}-1}dx.
\end{multline*}
Further, using conditions (${\rm g}_{1}$), (${\rm g}_{3}$), we have
for any $\varepsilon_{1}\in(0,1)$
\begin{multline*}
r\int\limits_{B_{r}(x_{0})\times\{t\}}
\frac{|\nabla(u-k)_{\pm}|}{w_{k}}\,\zeta^{c_{1}}dx
\leqslant
r\varepsilon_{1}^{\mu} \int\limits_{B_{r}(x_{0})\times\{t\}}
\frac{\mathcal{G}(x,t,|\nabla(u-k)_{\pm}|)}{w_{k}g(x,t,w_{k}/r)}\,
\zeta^{c_{1}}dx+ \frac{1}{\varepsilon_{1}}|B_{r}(x_{0})|
\\
\leqslant K_{2}(1+\mu)^{-1}\,\varepsilon_{1}^{\mu}re^{c_{6}\lambda(r)}
\int\limits_{B_{r}(x_{0})\times\{t\}}
\frac{\mathcal{G}(x,t,|\nabla(u-k)_{\pm}|)}{G(x_{0},t_{0},w_{k}/r)}
\,
\zeta^{c_{1}}dx+ \frac{1}{\varepsilon_{1}}|B_{r}(x_{0})|.
\end{multline*}
Choosing $\varepsilon_{1}$ from the condition
$\varepsilon_{1}^{\mu}=e^{-c_{6}\lambda(r)}$
and collecting the last two inequalities, we
arrive at the required \eqref{eqSkr2.9}.

Taking into account the previous arguments, we obtain
\begin{theorem}
Let $u$ be a solution to Eq. \eqref{g-growtheq},
fix point $(x_{0},t_{0})\in \Omega_{T}$ and assume that conditions
${\rm (g)}$, ${\rm (g_{1})}$, ${\rm (g_{2}\psi_{1})}$ and ${\rm (g_{3})}$
be fulfilled in some cylinder $Q_{R,R}(x_{0},t_{0})\subset\Omega_{T}$.
Assume also that conditions \eqref{eqSkr3.112505}, \eqref{eqSkr3.122505}
be fulfilled, then $u$ is continuous at $(x_{0},t_{0})$.
\end{theorem}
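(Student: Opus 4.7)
The plan is to reduce the theorem to Theorem \ref{th2.1} by showing that every bounded weak solution of \eqref{g-growtheq} automatically lies in the parabolic class $\mathcal{B}_{1,g,\lambda}(Q_{R,R}(x_{0},t_{0}))$. Once the three structural inequalities \eqref{eqSkrnew2.2}--\eqref{eqSkr2.4} are verified, the assumptions $({\rm g})$, $({\rm g}_{1})$, $({\rm g}_{2}\psi_{1})$ together with \eqref{eqSkr3.112505}--\eqref{eqSkr3.122505} give the conclusion directly via Theorem \ref{th2.1}. Note that the extra inequality \eqref{eqSkr2.9} associated with $({\rm g}_{2}\psi_{2})$ is not needed in the present degenerate setting.

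To verify \eqref{eqSkr2.4}, I would test the integral identity \eqref{parabintid} with $\varphi=(u-k)_{\pm}\zeta^{c_{1}}(x)\chi(t)$ on the time strip $(t_{0}-\theta,t)\subset(t_{0}-\theta,t_{0})$. Integration by parts in time yields the pointwise-in-$t$ term on the left, the initial trace and $\chi_{t}$ contributions on the right, and a nonnegative flux integral $g(x,t,|\nabla u|)|\nabla u|\zeta^{c_{1}}\chi$ that may be discarded. The remaining cross term involving $|\nabla\zeta|$ is controlled by the parabolic analogue of the Young-type inequality \eqref{gYoungineq1}, producing the integrand $g(x,t,K_{1}(u-k)_{\pm}/(\sigma r\zeta))\,(u-k)_{\pm}/r$ on the right-hand side of \eqref{eqSkr2.4}.

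For \eqref{eqSkrnew2.2} and \eqref{eqSkrnew2.3}, I would refine the previous test by invoking the stronger Young inequality \eqref{gYoungineq2}, whose parabolic analogue is valid under $({\rm g}_{3})$. Writing $|\nabla u|=|\nabla u|\cdot g(x,t,M_{\pm}(k,r,\theta)/r)/g(x_{0},t_{0},M_{\pm}(k,r,\theta)/r)$ and exploiting the normalization $M_{\pm}(k,r,\theta)\geqslant r$ together with the point-transfer hypothesis $({\rm g}_{1})$ lets me split the gradient integral on the left into the two contributions on the right of \eqref{eqSkrnew2.2}. The free parameter $\varepsilon_{1}$ from the Young step is then tuned to $\varepsilon\, e^{-\lambda(r)/\mu}$ so that the exponential loss $e^{c_{6}\lambda(r)}$ coming from $({\rm g}_{1})$ merges into the single prefactor $e^{c_{2}\lambda(r)}/\varepsilon$ prescribed by the definition.

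The principal technical obstacle is purely notational: one must carefully track how each invocation of $({\rm g}_{1})$, $({\rm g}_{3})$, and the two Young-type estimates combines with the normalizing denominator $g(x_{0},t_{0},M_{\pm}/r)$ and with the $\sigma^{-c_{3}}$ cutoff loss, so that the resulting inequalities reproduce \emph{verbatim} the structural form required by the class $\mathcal{B}_{1,g,\lambda}$. This parallels the elliptic derivation carried out at the end of Section \ref{sectellB1cl}, so no genuinely new difficulty is anticipated. Once the inclusion $u\in\mathcal{B}_{1,g,\lambda}(Q_{R,R}(x_{0},t_{0}))$ is established, Theorem \ref{th2.1} applies and yields the continuity of $u$ at $(x_{0},t_{0})$.
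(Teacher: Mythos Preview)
Your proposal is correct and follows essentially the same route as the paper: the paper also tests \eqref{parabintid} with $\varphi=(u-k)_{\pm}\zeta^{c_{1}}\chi$, invokes the Young-type inequalities and condition $({\rm g}_{1})$ exactly as in the elliptic case of Section~\ref{sectellB1cl} to obtain \eqref{eqSkrnew2.2}--\eqref{eqSkr2.4}, and then concludes by a direct appeal to Theorem~\ref{th2.1}. Your observation that \eqref{eqSkr2.9} is not needed in the degenerate setting is also in line with the paper, which derives that inequality only for use under $({\rm g}_{2}\psi_{2})$.
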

\begin{theorem}
Let $u$ be a solution to Eq. \eqref{g-growtheq},
fix point $(x_{0},t_{0})\in \Omega_{T}$ and assume that conditions
${\rm (g)}$, ${\rm (g_{1})}$, ${\rm (g_{2}\psi_{2})}$ and ${\rm (g_{3})}$
be fulfilled in some cylinder $Q_{R,R}(x_{0},t_{0})\subset\Omega_{T}$.
Assume also that conditions \eqref{eqSkr3.132505}--\eqref{eqSkr3.152505}
be fulfilled, then $u$ is continuous at $(x_{0},t_{0})$.
\end{theorem}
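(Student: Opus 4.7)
The plan is to reduce the theorem to Theorem \ref{th2.2} by verifying that every bounded weak solution of \eqref{g-growtheq} belongs to the parabolic class $\mathcal{B}_{1,g,\lambda}(Q_{R,R}(x_{0},t_{0}))$ and then quoting Theorem \ref{th2.2} verbatim. Indeed, all hypotheses of Theorem \ref{th2.2} except class membership are assumed, so continuity at $(x_{0},t_{0})$ will follow immediately once the defining inequalities \eqref{eqSkrnew2.2}--\eqref{eqSkr2.9} are established.

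The first step is to derive the energy--type inequalities \eqref{eqSkrnew2.2}, \eqref{eqSkrnew2.3}, and \eqref{eqSkr2.4}. For this I would test the integral identity \eqref{parabintid} with $\varphi=(u-k)_{\pm}\zeta^{c_{1}}(x)\chi(t)$, where $\zeta$, $\chi$ are admissible cut-offs. This is valid (up to Steklov averaging in time) since $u_{t}\in L^{2}_{\mathrm{loc}}$. After integration by parts in $t$, the time derivative produces the initial boundary term on the parabolic bottom $\{t_{0}-\theta\}$ and the $|\chi_{t}|$ term, while $g(x,t,|\nabla u|)\nabla u\cdot\nabla\varphi$ splits into a good coercive piece $\mathcal{G}(x,t,|\nabla u|)\zeta^{c_{1}}\chi$ and a term controlled via Young's inequalities \eqref{gYoungineq1}--\eqref{gYoungineq2}. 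Condition $({\rm g}_{1})$ transfers all weights $g(x,t,\cdot)$ to $g(x_{0},t_{0},\cdot)$ at the price of a factor $e^{c_{6}\lambda(r)}$, which is absorbed by choosing $\varepsilon_{1}=\varepsilon\, e^{-\lambda(r)/(p-1)}$-type parameters, exactly as in the elliptic case at the end of Section~\ref{sectellB1cl}. The assumption $M_{\pm}(k,r,\theta)\ge r$ ensures that the argument $M_{\pm}/r$ lies in the regime where $({\rm g}_{1})$ and $({\rm g}_{2}\psi_{2})$ apply.

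The second, and technically most delicate, step is to prove the logarithmic inequality \eqref{eqSkr2.9}, which is the new ingredient peculiar to the singular case. For this I would test \eqref{parabintid} with the nonlinear logarithmic-type function
$$
\varphi=\frac{w_{k}\,\zeta^{c_{1}}}{G(x_{0},t_{0},w_{k}/r)}\,\frac{t-t_{0}+\theta}{\theta},
\qquad w_{k}=(1+\varepsilon)M_{\pm}(k,r,\theta)-(u-k)_{\pm},
$$
exactly as the excerpt indicates. The time derivative term generates $\partial_{t}\Phi_{k}(x_{0},t_{0},u)$ plus a lower-order multiple of $\Phi_{k}/\theta$. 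The spatial term yields, after a short computation using $({\rm g}_{3})$ in the form $G(x_{0},t_{0},{\rm w})\le (1+\mu)^{-1}g(x_{0},t_{0},{\rm w})\,{\rm w}$, a positive coercive quantity proportional to $\mathcal{G}(x,t,|\nabla(u-k)_{\pm}|)/G(x_{0},t_{0},w_{k}/r)$. By Young's inequality and $({\rm g}_{1})$, this coercive term dominates $r|\nabla(u-k)_{\pm}|/w_{k}=r\bigl|\nabla\ln((1+\varepsilon)M_{\pm}/w_{k})\bigr|$ up to a factor $K_{2}(1+\mu)^{-1}\varepsilon_{1}^{\mu}e^{c_{6}\lambda(r)}$; choosing $\varepsilon_{1}^{\mu}=e^{-c_{6}\lambda(r)}$ absorbs the exponential into the constant and produces the residual term $K_{1}e^{c_{2}\lambda(r)}|B_{r}(x_{0})|$.

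The main obstacle is the second step. Unlike the energy inequality, the logarithmic estimate \eqref{eqSkr2.9} requires a precisely calibrated test function whose admissibility (boundedness and compact support) must be checked: since $w_{k}\ge \varepsilon M_{\pm}(k,r,\theta)\ge r$, the denominator $G(x_{0},t_{0},w_{k}/r)$ stays bounded below by $({\rm g})$ and $({\rm g}_{2}\psi_{2})$, which is exactly why the hypothesis $\varepsilon M_{\pm}(k,r,\theta)\ge r$ is imposed. Once \eqref{eqSkr2.9} is secured, together with \eqref{eqSkrnew2.2}--\eqref{eqSkr2.4}, the function $u$ lies in $\mathcal{B}_{1,g,\lambda}(Q_{R,R}(x_{0},t_{0}))$ and Theorem \ref{th2.2}, applied under the assumed \eqref{eqSkr3.132505}--\eqref{eqSkr3.152505}, yields continuity at $(x_{0},t_{0})$.
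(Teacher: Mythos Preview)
Your proposal is correct and follows essentially the same route as the paper: test \eqref{parabintid} with $(u-k)_{\pm}\zeta^{c_{1}}\chi$ to obtain \eqref{eqSkrnew2.2}--\eqref{eqSkr2.4}, then with $\varphi=\dfrac{w_{k}\zeta^{c_{1}}}{G(x_{0},t_{0},w_{k}/r)}\cdot\dfrac{t-t_{0}+\theta}{\theta}$, use $({\rm g}_{3})$ to get $G\leqslant(1+\mu)^{-1}g\cdot{\rm w}$, and absorb the $e^{c_{6}\lambda(r)}$ factor via the choice $\varepsilon_{1}^{\mu}=e^{-c_{6}\lambda(r)}$ to reach \eqref{eqSkr2.9}; Theorem~\ref{th2.2} then concludes. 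The test-function admissibility check you flag (namely $w_{k}\geqslant\varepsilon M_{\pm}\geqslant r$) is the same observation implicitly used in the paper.
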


\section{De Giorgi type lemmas}



Fix $(\bar{x}, \bar{t})\in Q_{R,R}(x_{0},t_{0})$ and construct the cylinder
$Q_{r,\theta}(\bar{x},\bar{t})\subset Q_{R,R}(x_{0},t_{0})$,
$r\leqslant R^{\bar{\delta}}$, $\bar{\delta}>\delta$,
where $\delta$ is the parameter that was defined in assumptions
$({\rm g}_{2}\psi_{1})$ and $({\rm g}_{2}\psi_{2})$.
We denote by
$\mu^{\pm}$ and $\omega$ non-negative numbers such that
$\mu^{+}\geqslant \text{ess}\!\!\!\!\!\sup\limits_{Q_{r,\theta}(\bar{x},\bar{t})}
\!\!\! u$,
$\mu^{-}\leqslant \text{ess}\!\!\!\!\inf\limits_{Q_{r,\theta}(\bar{x},\bar{t})}
\!\!\!\! u$,
$\omega= \mu^{+}-\mu^{-}$.
%

Our main result of this section reads as follows
\begin{lemma}\label{thSkr3.1}
Let $u\in \mathcal{B}_{1,g,\lambda}(Q_{R,R}(x_{0},t_{0}))$
and $\xi\in(0,1)$, then there exists $\nu_{1}\in (0,1)$
depending only upon the data,  $\xi$, $\omega$,
$r$ and $\theta$ such that if
\begin{equation*}
|\{(x,t)\in Q_{r,\theta}(\bar{x},\bar{t}): u(x,t)\leqslant \mu^{-}+ \xi\omega\}|
\leqslant \nu |Q_{r,\theta}(\bar{x},\bar{t})| \ \ \text{for} \ \ \nu\leqslant\nu_{1},
\end{equation*}
then either
\begin{equation}\label{eqSkr3.2}
\xi\omega\leqslant (1+b_{0})r^{1-\delta/\bar{\delta}},
\end{equation}
or
\begin{equation}\label{eq5.2Skr2505}
u(x,t)\geqslant \mu^{-}+\frac{\xi\omega}{2} \ \ \text{for a.a.} \
(x,t)\in Q_{r/2,\theta/2}(\bar{x},\bar{t}).
\end{equation}
Likewise, if
\begin{equation}\label{eq5.3Skr2505}
|\{(x,t)\in Q_{r,\theta}(\bar{x},\bar{t}): u(x,t)\leqslant \mu^{-}+ \xi\omega\}|
\leqslant \nu |Q_{r,\theta}(\bar{x},\bar{t})| \ \ \text{for} \ \ \nu\leqslant\nu_{1},
\end{equation}
then \eqref{eqSkr3.2} holds true, or
\begin{equation}\label{eq5.4Skr2505}
u(x,t)\leqslant \mu^{+}-\frac{\xi\omega}{2} \ \ \text{for a.a.} \
(x,t)\in Q_{r/2,\theta/2}(\bar{x},\bar{t}).
\end{equation}
\end{lemma}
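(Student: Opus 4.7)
The plan is to execute the standard parabolic De\,Giorgi first lemma along the lines of \cite[Chap.~II, Lemma~6.1]{LadUr} and the elliptic Lemma~\ref{DGellem2.3}, with careful bookkeeping of the Orlicz terms appearing on the right-hand sides of \eqref{eqSkrnew2.3} and \eqref{eqSkr2.4}. I will detail the first alternative; the second follows symmetrically (with \eqref{eqSkrnew2.2} in place of \eqref{eqSkrnew2.3}).

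For $j=0,1,2,\ldots$, I set $\rho_{j}:=(r/2)(1+2^{-j})$, $\theta_{j}:=(\theta/2)(1+2^{-j})$, $k_{j}:=\mu^{-}+\xi\omega/2+\xi\omega/2^{j+1}$, $Q_{j}:=B_{\rho_{j}}(\bar{x})\times(\bar{t}-\theta_{j},\bar{t})$, and $y_{j}:=|A^{-}_{k_{j},\rho_{j},\theta_{j}}|$. I choose cutoffs $\zeta_{j}\in C_{0}^{\infty}(B_{\rho_{j}}(\bar{x}))$ with $\zeta_{j}=1$ on $B_{\rho_{j+1}}(\bar{x})$ and $|\nabla\zeta_{j}|\leqslant 2^{j+3}/r$, and $\chi_{j}\in C^{1}(\mathbb{R})$ piecewise linear with $\chi_{j}=1$ on $[\bar{t}-\theta_{j+1},\bar{t}]$, $\chi_{j}=0$ on $(-\infty,\bar{t}-\theta_{j}]$, $|\chi_{j}^{\prime}|\leqslant 2^{j+3}/\theta$. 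Assuming that \eqref{eqSkr3.2} fails, i.e.\ $\xi\omega>(1+b_{0})r^{1-\delta/\bar{\delta}}$, and using $r\leqslant R^{\bar{\delta}}$ with $\bar{\delta}>\delta$, I obtain $M_{-}(k_{j},\rho_{j},\theta_{j})\geqslant \xi\omega/2>\max\{r,b_{0}R^{-\delta}r\}$. This guarantees both that \eqref{eqSkrnew2.3} and \eqref{eqSkr2.4} are applicable and that the arguments of $g$ fall in the range where the doubling-type bound extracted from $({\rm g}_{2}\psi_{1})$ (or its singular counterpart $({\rm g}_{2}\psi_{2})$) operates.

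Applying \eqref{eqSkrnew2.3} and \eqref{eqSkr2.4} with $l=k_{j}$, $k=k_{j+1}$, $\varepsilon=1$, $\sigma=2^{-j-2}$, $\rho=\rho_{j}$, and the cutoffs $(\zeta_{j},\chi_{j})$, the ratio $w/v$ with $w=K_{1}(u-k_{j+1})_{-}/(\sigma\rho_{j}\zeta_{j})$ and $v=M_{-}(k_{j},\rho_{j},\theta_{j})/r$ is bounded by $\gamma\,2^{j}\zeta_{j}^{-1}$. The doubling of $g$ inferred from $({\rm g}_{2}\psi_{1})$ in the form $g(\cdot,\cdot,w)\leqslant \gamma(w/v)^{1+\mu_{2}}g(\cdot,\cdot,v)$, followed by the spatial--temporal transfer \eqref{eqSkrnew2.5}, yields
\begin{equation*}
g\!\left(x,t,\tfrac{K_{1}(u-k_{j+1})_{-}}{\sigma\rho_{j}\zeta_{j}}\right)\leqslant \gamma\,2^{j\gamma}\zeta_{j}^{-\gamma}\,e^{c\lambda(r)}\,g\!\left(x_{0},t_{0},\tfrac{M_{-}(k_{j},\rho_{j},\theta_{j})}{r}\right),
\end{equation*}
and the prefactor $1/g(x_{0},t_{0},M_{-}/r)$ in front of the curly braces in \eqref{eqSkrnew2.3}, \eqref{eqSkr2.4} neutralises this Orlicz weight exactly. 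Summing the two resulting estimates yields a purely geometric parabolic energy inequality
\begin{equation*}
\sup_{t}\int_{B_{\rho_{j}}}(u-k_{j+1})_{-}^{2}\zeta_{j}^{c_{1}}\,dx + \iint_{Q_{j+1}}|\nabla u|\,dxdt \leqslant \gamma\,b^{j}\,e^{c\lambda(r)}\Big(\tfrac{\xi\omega}{r}\Big)^{\!2}\Big(1+\tfrac{r}{\theta}\Big)\,y_{j}.
\end{equation*}

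Finally, applying the Sobolev embedding theorem (in the spirit of the argument in Lemma~\ref{DGellem2.3} or the parabolic embedding for $L^{\infty}L^{2}\cap L^{1}W^{1,1}$) to $(u-k_{j+1})_{-}\zeta_{j}^{c_{1}}\chi_{j}$ together with the bound above, and using $k_{j}-k_{j+1}=\xi\omega/2^{j+2}$, I arrive at the canonical recursion
\begin{equation*}
y_{j+1}\leqslant \gamma\, b^{j}\,\Psi(r,\theta,\omega,\xi)\, y_{j}^{1+\kappa}
\end{equation*}
for some $\kappa>0$ depending only on $n$ and a factor $\Psi$ absorbing $e^{c\lambda(r)}$, $1/(\xi\omega)$, $1/r$, and $r/\theta$. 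Lemma~\ref{lemonyj} then forces $y_{j}\to0$ as soon as $y_{0}\leqslant \nu_{1}:=(\gamma\Psi)^{-1/\kappa}b^{-1/\kappa^{2}}$, which produces \eqref{eq5.2Skr2505} and has precisely the advertised dependence on the data, $\xi$, $\omega$, $r$, $\theta$. \textbf{The main obstacle} is the Orlicz cancellation in the middle step: one must confirm that on $A^{-}_{k_{j},\rho_{j},\theta_{j}}$ the argument of $g$ is controlled, up to tame factors of $2^{j}\zeta_{j}^{-1}$, by $M_{-}/r$, so that the Orlicz prefactor $1/g(x_{0},t_{0},M_{-}/r)$ completely cancels the Orlicz integrand in \eqref{eqSkrnew2.3} and \eqref{eqSkr2.4}; once this is achieved the iteration reduces to the classical De Giorgi scheme, with the entire growth information of $g$ summarised in the single exponential factor $e^{c\lambda(r)}$ that is ultimately absorbed into $\nu_{1}$.
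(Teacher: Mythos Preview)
Your plan is essentially the paper's: nested cylinders and levels, the $\mathcal{B}_{1}$-class inequalities \eqref{eqSkrnew2.3} and \eqref{eqSkr2.4} with $\varepsilon=1$, the doubling of $g$ drawn from $({\rm g}_{2}\psi_{1})$ (or $({\rm g}_{2}\psi_{2})$) together with \eqref{eqSkrnew2.5} to handle the Orlicz integrand, then the parabolic embedding $L^{\infty}_{t}L^{2}_{x}\cap L^{1}_{t}W^{1,1}_{x}\hookrightarrow L^{(n+2)/n}$ plus H\"older, and finally Lemma~\ref{lemonyj}.

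One concrete point needs correcting. Inequality \eqref{eqSkr2.4} has \emph{no} prefactor $1/g(x_{0},t_{0},M_{-}/r)$ in front of its curly brace (only $K_{1}\sigma^{-c_{3}}$), so the Orlicz weight in the $\sup_{t}$-estimate is not ``neutralised'' as you claim: after the doubling and the transfer \eqref{eqSkrnew2.5} a factor $g(\bar{x},\bar{t},\xi\omega/r)$ survives. Consequently your displayed ``purely geometric'' energy inequality is not the right object; the correct pair of bounds (the paper's \eqref{eqSqr3.5}--\eqref{eqSqr3.6}) carries the intrinsic quantity
\[
\Theta:=\frac{r\,\xi\omega}{\theta\,g(\bar{x},\bar{t},\xi\omega/r)},
\]
and after the embedding step the recursion and Lemma~\ref{lemonyj} yield
$\nu_{1}=\gamma^{-1}e^{-c_{0}(n+1)\lambda(r)}\,\Theta\,(1+\Theta)^{-n-1}$
as in \eqref{eqSkr3.7}. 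For the lemma as stated any $\nu_{1}(\text{data},\xi,\omega,r,\theta)$ would do, but this explicit form is used quantitatively later (in the first-alternative analysis, where one needs $\nu_{1}\geqslant\gamma^{-1}e^{-c_{0}(2n+3)\lambda(\rho)}$ for the intrinsic choice of $\theta$), so it is worth getting right.
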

\begin{proof}
Further, we will suppose that
\begin{equation}\label{eqSkr3.4}
\xi\omega\geqslant (1+b_{0})r^{1-\delta/\bar{\delta}}.
\end{equation}
Let $(x_{1},t_{1})\in Q_{r/2,\theta/2}(\bar{x},\bar{t})$ be arbitrary.
For $j=0,1,2, \ldots$, we define the sequences
$$
r_{j}:=\frac{r}{4}(1+2^{-j}), \quad \theta_{j}:=\frac{\theta}{4}(1+2^{-j}), \quad
\bar{r}_{j}:=\frac{r_{j}+r_{j+1}}{2},
$$
$$
\bar{\theta}_{j}:=\frac{\theta_{j}+\theta_{j+1}}{2}, \quad
k_{j}:=\mu^{-}+\frac{\xi\omega}{2}+\frac{\xi\omega}{2^{j+1}}, \quad
B_{j}:=B_{r_{j}}(x_{1}),
$$
$$
\bar{B}_{j}:=B_{\bar{r}_{j}}(x_{1}), \quad Q_{j}:=Q_{r_{j},\theta_{j}}(x_{1},t_{1}),
\quad \bar{Q}_{j}:=Q_{\bar{r}_{j},\bar{\theta}_{j}}(x_{1},t_{1}),
$$
$$
A_{j,k_{j}}:=Q_{j}\cap\{u<k_{j}\}, \quad
\bar{A}_{j,k_{j}}:=\bar{Q}_{j}\cap\{u<k_{j}\}.
$$
Let $\zeta_{j}\in C_{0}^{\infty}(\bar{B}_{j})$ be such that
$0\leqslant\zeta_{j}\leqslant 1$, $\zeta_{j}=1$ in $B_{j+1}$ and
$|\nabla \zeta_{j}|\leqslant \gamma\, 2^{j}/r$.
Let also $\chi_{j}(t)$
be such that $\chi_{j}(t)=1$ for $t\geqslant t_{1}-\theta_{j+1}$,
$\chi_{j}(t)=0$ for $t\leqslant t_{1}-\bar{\theta}_{j}$,
$0\leqslant \chi_{j}(t) \leqslant1$ and
$\left|\frac{d\chi_{j}(t)}{dt}\right|\leqslant  \frac{\gamma\,2^{j}}{\theta}$.

Using \eqref{eqSkrnew2.5}, \eqref{eqSkr3.4}, and hypotheses (${\rm g}_{2}\psi_{1}$)
or (${\rm g}_{2}\psi_{2}$),
we can rewrite inequality \eqref{eqSkrnew2.3} with $\varepsilon=1$ in the form
\begin{multline}\label{eqSqr3.5}
\iint\limits_{\bar{A}_{j,k_{j}}} |\nabla (k_{j}-\max\{u, k_{j+1}\})_{+}|\,
\zeta_{j}^{c_{1}}\chi_{j}\,dxdt
\leqslant \gamma\, \frac{\xi\omega}{r}
e^{c_{2}\lambda(r)}|A_{j,k_{j}}|
\\
+\frac{\gamma\, 2^{j\gamma}}{g(x_{1},t_{1}, \xi\omega/r)}
\left\{ \frac{(\xi\omega)^{2}}{\theta}\, |A_{j,k_{j}}|+\frac{\xi\omega}{r}
\iint\limits_{A_{j,k_{j}}} g\left(x,t, \frac{K_{1}2^{j\gamma}\xi\omega}{r\zeta_{j}}\right)
\zeta_{j}^{c_{1}-1}\chi_{j}\, dxdt \right\}
\\
\leqslant
\gamma\, \frac{\xi\omega}{r}
e^{c_{2}\lambda(r)}|A_{j,k_{j}}|+
\gamma\, 2^{j\gamma} \frac{\xi\omega}{r}
\left\{  \frac{\xi\omega r}{\theta g(x_{1},t_{1}, \xi\omega/r)}+
e^{c_{6}\lambda(r)} \right\}
|A_{j,k_{j}}|
\\
\leqslant
\gamma\, 2^{j\gamma} \frac{\xi\omega}{r}  e^{c_{0}\lambda(r)}
\left(1+\frac{\xi\omega r}{\theta g(\bar{x}, \bar{t}, \xi\omega/r)} \right)|A_{j,k_{j}}|,
\hskip 4,65cm
\end{multline}
where $c_{0}:=\max\{c_{2},c_{6}\}$. Here we also assume that
$c_{1}\geqslant \max\{\mu_{2}, 2-\mu_{3}\}$.

Similarly inequality \eqref{eqSkr2.4} implies that
\begin{multline}\label{eqSqr3.6}
\sup\limits_{t_{1}-\bar{\theta}_{j}<t<t_{1}} \int\limits_{\bar{B}_{j}\times\{t\}}
(k_{j}-u)_{+}^{2}\,\zeta_{j}^{c_{1}}\chi_{j}\,dx
\\
\leqslant \gamma\, 2^{j\gamma} \bigg\{ \frac{(\xi \omega)^{2}}{\theta} |A_{j,k_{j}}| +
\frac{\xi \omega}{r} \iint\limits_{A_{j,k_{j}}}
g\bigg(x,t, \frac{K_{1}2^{j\gamma}\xi\omega}{r \zeta_{j}}\bigg)
\zeta_{j}^{c_{1}-1}\chi_{j}\, dxdt \bigg\}
\\
\leqslant \gamma\, 2^{j\gamma}\, \frac{\xi \omega}{r} e^{c_{0}\lambda(r)}
\bigg(1+ \frac{r\xi \omega}{\theta g(\bar{x}, \bar{t}, \xi \omega/r)}  \bigg)\,
g(\bar{x}, \bar{t}, \xi \omega/r)\, |A_{j,k_{j}}|.
\end{multline}

From \eqref{eqSqr3.5}, \eqref{eqSqr3.6}, by the Sobolev embedding theorem and
H\"{o}lder's inequality, we obtain
\begin{multline*}
(k_{j}-k_{j+1})|A_{j+1,k_{j+1}}|\leqslant \iint\limits_{\bar{A}_{j,k_{j}}}
(k_{j}-\max\{u, k_{j+1}\})_{+}\,\zeta_{j}^{c_{1}}\chi_{j}\,dxdt
\\
\leqslant \gamma |A_{j,k_{j}}|^{\frac{2}{n+2}} \bigg(
\sup\limits_{t_{1}-\bar{\theta}_{j}<t<t_{1}}
\int\limits_{\bar{B}_{j}\times \{t\}} (k_{j}-u)_{+}^{2}\,\zeta_{j}^{c_{1}}\chi_{j}\,dx
\bigg)^{\frac{1}{n+2}}
\\
\times \bigg(
\iint\limits_{\bar{A}_{j,k_{j}}}
\left|\nabla \big((k_{j}-\max\{u, k_{j+1}\})_{+}\,\zeta_{j}^{c_{1}}\big)\right|
\chi_{j}\,dxdt\bigg)^{\frac{n}{n+2}} \ \ \
\\
\leqslant\gamma\,2^{j\gamma}
\left(\frac{\xi\omega}{r} e^{c_{0}\lambda(r)} \right)^{\frac{n+1}{n+2}}
\left(1+\frac{r\xi\omega}{\theta g(\bar{x}, \bar{t}, \xi\omega/r)} \right)^{\frac{n+1}{n+2}}
\big[g(\bar{x}, \bar{t}, \xi\omega/r)\big]^{\frac{1}{n+2}}
|A_{j,k_{j}}|^{1+\frac{1}{n+2}},
\end{multline*}
which implies
\begin{multline*}
y_{j+1}:=\frac{|A_{j+1,k_{j+1}}|}{|Q_{r,\theta}(\bar{x}, \bar{t})|}
\\
\leqslant
\gamma\,2^{j\gamma} e^{c_{0}\frac{n+1}{n+2}\lambda(r)}
\left(1+\frac{r\xi\omega}{\theta g(\bar{x}, \bar{t}, \xi\omega/r)} \right)^{\frac{n+1}{n+2}}
 \left( \frac{\theta g(\bar{x}, \bar{t}, \xi\omega/r)}{r \xi\omega} \right)^{\frac{1}{n+2}}
y_{j}^{1+\frac{1}{n+2}}, \  j=0,1,2, \ldots.
\end{multline*}
Iterating this inequality, we obtain that $\lim\limits_{j\rightarrow\infty}y_{j}=0$,
provided that $y_{0}\leqslant \nu_{1}$ and $\nu_{1}$ is chosen to satisfy
\begin{equation}\label{eqSkr3.7}
\nu_{1}:= \gamma^{-1}  e^{-c_{0}(n+1)\lambda(r)}
\frac{r\xi\omega}{\theta g(\bar{x}, \bar{t}, \xi\omega/r)}
\left(1+\frac{r\xi\omega}{\theta g(\bar{x}, \bar{t}, \xi\omega/r)} \right)^{-n-1},
\end{equation}
which proves the lemma.
 \end{proof}

\begin{lemma}[De\,Giorgi type lemma involving initial data]\label{TheoremSkr3.2}
Assume that the hypotheses
$({\rm g})$, $({\rm g}_{1})$  be fulfilled,
and let $u\in \mathcal{B}_{1,g,\lambda}(Q_{R,R}(x_{0},t_{0}))$.
Fix $\xi\in(0,1)$, then
there exists $\nu_{2}\in (0,1)$ depending only on upon the data, $\xi$, $\omega$, $r$
and $\theta$ such that if
\begin{equation*}
u(x,t_{0}-\theta)\geqslant \mu^{-}+ \xi\omega \ \ \text{ for } \ x\in B_{r}(\bar{x}),
\end{equation*}
and
\begin{equation*}
|\{(x,t)\in Q_{r,\theta}(\bar{x}, \bar{t}): u(x,t)\leqslant \mu^{-}+\xi\omega\}|\leqslant
\nu |Q_{r,\theta}(\bar{x}, \bar{t})|,
\end{equation*}
for $\nu\leqslant\nu_{2}$, then either \eqref{eqSkr3.2} holds true, or
\begin{equation*}
u(x,t)\geqslant \mu^{-}+ \xi\omega/2 \ \ \text{ for a.a.} \
(x,t)\in Q_{r/2,\theta}(\bar{x}, \bar{t}).
\end{equation*}

Likewise, if
$$
u(x,t_{0}-\theta)\leqslant \mu^{+}- \xi\omega \ \ \text{ for } \ x\in B_{r}(\bar{x}),
$$
and
\begin{equation*}
|\{(x,t)\in Q_{r,\theta}(\bar{x}, \bar{t}): u(x,t)\geqslant \mu^{+}-\xi\omega\}|\leqslant
\nu |Q_{r,\theta}(\bar{x}, \bar{t})|,
\end{equation*}
for $\nu\leqslant\nu_{2}$, then either \eqref{eqSkr3.2} holds true, or
\begin{equation*}
u(x,t)\leqslant \mu^{+}- \xi\omega/2 \ \ \text{ for a.a.} \
(x,t)\in Q_{r/2,\theta}(\bar{x}, \bar{t}).
\end{equation*}
\end{lemma}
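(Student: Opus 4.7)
The plan is to mimic the De Giorgi iteration of Lemma \ref{thSkr3.1}, exploiting the initial-data hypothesis to simplify both the space--time geometry and the energy inequalities. The crucial observation is that, since $u(x,\bar t-\theta)\geqslant \mu^-+\xi\omega$ on $B_r(\bar x)$, every truncation $(u-k_j)_-$ with $k_j\leqslant \mu^-+\xi\omega$ vanishes on the initial slice $\{t=\bar t-\theta\}$. Consequently one does not need to contract the time variable during the iteration, which is precisely why the conclusion holds on $Q_{r/2,\theta}(\bar x,\bar t)$ rather than on the smaller $Q_{r/2,\theta/2}(\bar x,\bar t)$ obtained in Lemma \ref{thSkr3.1}.

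Concretely, I would assume that \eqref{eqSkr3.2} is violated so that $\xi\omega/r>1+b_0$, and for $j=0,1,\ldots$ set
\[
r_j:=\tfrac{r}{2}(1+2^{-j}),\qquad \bar r_j:=\tfrac{r_j+r_{j+1}}{2},\qquad
k_j:=\mu^-+\tfrac{\xi\omega}{2}+\tfrac{\xi\omega}{2^{j+1}}.
\]
Take purely spatial cutoffs $\zeta_j\in C_0^\infty(B_{\bar r_j}(\bar x))$ with $\zeta_j\equiv 1$ on $B_{r_{j+1}}(\bar x)$ and $|\nabla \zeta_j|\leqslant \gamma\, 2^j/r$, and a constant-in-time cutoff $\chi\equiv 1$ on $(\bar t-\theta,\bar t)$. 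Insert these into \eqref{eqSkrnew2.3} (with $l=k_j$, $k=k_{j+1}$, $\rho=\bar r_j$, $\sigma\sim 2^{-j}$, $\varepsilon=1$) and into \eqref{eqSkr2.4} on the same cylinders. The initial-trace term drops because $(u-k_j)_-|_{t=\bar t-\theta}=0$, and the $|\chi_t|$ term drops because $\chi_t\equiv 0$; the only surviving right-hand side contribution is the $g$-integral, which by condition $({\rm g}_1)$ at the super-unit scale $\xi\omega/r$ and the monotonicity of $g$ in its last argument is bounded by $\gamma\, 2^{j\gamma}e^{c_6\lambda(r)}\,g(\bar x,\bar t,\xi\omega/r)\,|A_{j,k_j}|$.

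Dividing by $g(\bar x,\bar t,\xi\omega/r)$ and using \eqref{eqSkrnew2.6} to absorb that factor into a structural constant yields the parabolic counterpart of \eqref{eqSqr3.5}--\eqref{eqSqr3.6}, now without any $\theta^{-1}$ waste term. The parabolic Sobolev embedding applied exactly as in the proof of Lemma \ref{thSkr3.1} then produces the iteration
\[
y_{j+1}\leqslant \gamma\, 2^{j\gamma}\, C_\ast\, y_j^{1+1/(n+2)},\qquad y_j:=\frac{|A_{j,k_j}|}{|Q_{r,\theta}(\bar x,\bar t)|},
\]
where $C_\ast$ depends on the data together with $\xi,\omega,r,\theta$ through the factor $g(\bar x,\bar t,\xi\omega/r)$. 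Lemma \ref{lemonyj} then delivers $\lim_j y_j=0$ provided $y_0\leqslant \nu_2:=\nu_2(\text{data},\xi,\omega,r,\theta)$, which is exactly \eqref{eq5.2Skr2505}. The complementary statement is proved symmetrically using \eqref{eqSkrnew2.2} and the plus-part of \eqref{eqSkr2.4}. The main bookkeeping hurdle is verifying that only $({\rm g})$ and $({\rm g}_1)$ are required to close the energy estimate; this is possible precisely because $\chi$ is constant in time, so no hypothesis on $\psi$ (neither $({\rm g}_2\psi_1)$ nor $({\rm g}_2\psi_2)$) needs to be invoked in the iteration.
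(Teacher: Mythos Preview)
Your approach is exactly the paper's: take $\chi(t)\equiv 1$, drop the initial--trace term (because $(u-k_j)_-$ vanishes at $t=\bar t-\theta$) and the $|\chi_t|$ term, and rerun the iteration of Lemma~\ref{thSkr3.1} without shrinking in time, arriving at the smallness threshold
\[
\nu_{2}=\gamma^{-1}e^{-c_{0}(n+1)\lambda(r)}\,\frac{r\xi\omega}{\theta\,g(\bar x,\bar t,\xi\omega/r)} .
\]

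There is, however, one genuine slip in your final paragraph. Taking $\chi\equiv 1$ eliminates the initial and $|\chi_t|$ contributions, but it does \emph{not} remove the need for the growth hypotheses $({\rm g}_2\psi_1)$ or $({\rm g}_2\psi_2)$. The surviving $g$-integral still has argument $K_1(u-k_j)_-/(\sigma r\zeta_j)\sim 2^{j\gamma}\xi\omega/(r\zeta_j)$, which is \emph{larger} than $\xi\omega/r$; monotonicity of $g(x,t,\cdot)$ therefore points the wrong way and cannot produce the bound $\gamma\,2^{j\gamma}e^{c_6\lambda(r)}g(\bar x,\bar t,\xi\omega/r)$ that you wrote. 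What is needed is the polynomial upper growth \eqref{eqSkr2.7} (degenerate case) or \eqref{eqeqSkr2.8mu3} (singular case), together with $c_1\geqslant\max\{\mu_2,\,2-\mu_3\}$ so that the factor $\zeta_j^{-(1+\mu_2)}$ (resp.\ $\zeta_j^{-(1-\mu_3)}$) is absorbed by $\zeta_j^{c_1-1}$. This is precisely the step ``Using \eqref{eqSkrnew2.5}, \eqref{eqSkr3.4}, and hypotheses $({\rm g}_{2}\psi_{1})$ or $({\rm g}_{2}\psi_{2})$'' in the proof of Lemma~\ref{thSkr3.1}, and it is implicitly carried over when the paper says ``repeat the same arguments''. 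So your iteration is correct, but the justification of that particular estimate must invoke one of the $\psi$-conditions, not merely $({\rm g})$, $({\rm g}_1)$, and monotonicity.
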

\begin{proof}
The proof is similar to that of Theorem \ref{thSkr3.1}. Taking $\chi(t)\equiv1$,
using inequalities \eqref{eqSkrnew2.3}, \eqref{eqSkr2.4} and repeat the same arguments
as in the previous proof, we prove the theorem with
\begin{equation}\label{eqSkr3.11}
\nu_{2}:=\gamma^{-1} e^{-c_{0}(n+1)\lambda(r)}
\frac{r\xi\omega}{\theta g(\bar{x}, \bar{t}, \xi\omega/r)}\,.
\end{equation}
\end{proof}


\section{Continuity in the ,,degenerate'' case, proof of Theorem \ref{th2.1}}


Fix number $\rho>0$ such that $\rho<R^{\bar{\delta}}$,
$\bar{\delta}>\max\{\delta/\delta_{0}, 1+\delta\}$ and construct the cylinder
$$
Q_{\rho}(x_{0},t_{0}):=B_{\rho}(x_{0})
\times (t_{0}-K_{2}c_{5}(1+b_{0})\,\rho^{2-\delta/\bar{\delta}}, t_{0})
\subset Q_{R,R}(x_{0},t_{0}),
$$
and set $\mu^{+}:=\text{ess}\!\!\!\!\!\sup\limits_{Q_{\rho}(x_{0},t_{0})}\!\!\!\! u$,
$\mu^{-}:= \text{ess}\!\!\!\!\!\!\inf\limits_{Q_{\rho}(x_{0},t_{0})}\!\!\!\!u$,
$\omega= \mu^{+}-\mu^{-}$.
If
$
\omega\geqslant 4(1+b_{0})\rho^{1-\delta/\bar{\delta}},
$
then, by \eqref{eqSkrnew2.6} and \eqref{psib0cond}, we have
\begin{multline*}
\psi\left(x_{0},t_{0}, \frac{\omega}{4\rho}\right)
\geqslant \psi(x_{0},t_{0}, (1+b_{0})\rho^{-\delta/\bar{\delta}} )
\\
=
\frac{\rho^{\delta/\bar{\delta}}g(x_{0},t_{0},(1+b_{0})\rho^{-\delta/\bar{\delta}} )}
{(1+b_{0})}\geqslant \frac{\rho^{\delta/\bar{\delta}}g(x_{0},t_{0},1)}{1+b_{0}}
\geqslant \frac{\rho^{\delta/\bar{\delta}}}{c_{5}(1+b_{0})},
\end{multline*}
and therefore
$$
Q_{\rho,\bar{\theta}}(x_{0},t_{0})\subset Q_{\rho}(x_{0},t_{0}), \quad
\bar{\theta}:=\frac{K_{2}\rho^{2}}{\psi\left(x_{0},t_{0}, \dfrac{\omega}{4\rho}\right)}.
$$
For fixed positive numbers $s_{\ast}:=s_{\ast}(\rho)<s^{\ast}:=s^{\ast}(\rho)$,
which will be specified
later, set
$$
\theta_{\ast}:= \frac{K_{2}\rho^{2}}{\psi\left(x_{0},t_{0},
\dfrac{\omega}{2^{s_{\ast}+2} \rho}\right)},
$$
and construct the cylinder
$Q_{\rho, \theta_{\ast}}(x_{0},t_{0})\subset Q_{\rho, \bar{\theta}}(x_{0},t_{0})$.
Consider also the cylinders
$Q_{\rho, \eta}(x_{0},\bar{t})\subset Q_{\rho,\theta_{\ast}}(x_{0},t_{0})$, where
$$
\eta:=\frac{K_{2}\rho^{2}}{4\psi\left(x_{0},t_{0},
\dfrac{\omega}{4 \rho}\right)} \ \ \text{and} \ \
t_{0}-\theta_{\ast}\leqslant \bar{t}-\eta< \bar{t}\leqslant t_{0}.
$$
The following two alternative cases are possible:

\textsl{First alternative}. There exists a cylinder
$Q_{\rho, \eta}(x_{0},\bar{t})\subset Q_{\rho,\theta_{\ast}}(x_{0},t_{0})$
such that
\begin{equation*}
|\{(x,t)\in Q_{\rho, \eta}(x_{0},\bar{t}): u(x,t)\leqslant \mu^{-}+\omega/2\}|\leqslant
\overline{\nu} e^{-c_{0}(2n+3)\lambda(\rho)}
|Q_{\rho, \eta}(x_{0},\bar{t})|,
\end{equation*}
where $\overline{\nu}$ is a sufficiently small positive number which will be chosen later
depending only upon the data.

\textsl{Second alternative}. For all cylinders
$Q_{\rho, \eta}(x_{0},\bar{t})\subset Q_{\rho,\theta_{\ast}}(x_{0},t_{0})$
the following inequality holds:
\begin{equation*}\label{eqSkr4.3}
|\{(x,t)\in Q_{\rho, \eta}(x_{0},\bar{t}): u(x,t)\geqslant \mu^{+}-\omega/2\}|\leqslant
\left(1-\overline{\nu}e^{-c_{0}(2n+3)\lambda(\rho)} \right)
|Q_{\rho, \eta}(x_{0},\bar{t})|,
\end{equation*}

Further we will assume that
\begin{equation}\label{eqSkr4.4}
\omega\geqslant 2^{s^{\ast}+2}(1+b_{0})\rho^{1-\delta/\bar{\delta}}.
\end{equation}

\subsection{Analysis of the first alternative}

By Theorem \ref{thSkr3.1} with $r=\rho$, $\theta=\eta$, $\xi=1/2$,
$(\bar{x}, \bar{t})=(x_{0}, \bar{t})$, assuming that $\rho$ is small enough,
and since the number $\nu_{1}$ defined in \eqref{eqSkr3.7} satisfies
$$
\nu_{1}=\gamma^{-1}e^{-c_{0}(n+1)\lambda(\rho)}
\frac{r\omega}{\eta\, g(x_{0}, \bar{t}, \omega/2r)}\,
\frac{1}{\left(1+ \dfrac{r\omega}{\eta\, g(x_{0}, \bar{t}, \omega/2r)} \right)^{n+1}}
\geqslant
\gamma^{-1}e^{-c_{0}(2n+3)\lambda(\rho)},
$$
we conclude that
\begin{equation}\label{eqSkr4.5}
u(x,\bar{t})\geqslant \mu^{-}+\omega/4 \ \ \text{for all} \ \ x\in B_{\rho/2}(x_{0}),
\end{equation}
provided $\overline{\nu}$ is chosen such that $\overline{\nu}=\gamma^{-1}$.
\begin{lemma}
For any $\nu\in(0,1)$ there exists $s^{\ast}$ depending only upon the data, $s_{\ast}$
and $\rho$ such that 
\begin{equation}\label{eqSkr4.6}
\left| \left\{x\in B_{\rho/4}(x_{0}): u(x,t)\leqslant \mu^{-}+\omega/2^{s^{\ast}} \right\} \right|
\leqslant \nu \left|B_{\rho/4}(x_{0})\right| \quad
\text{for all} \ \ t\in (\bar{t}, t_{0}).
\end{equation}
\end{lemma}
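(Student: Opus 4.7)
My plan is to apply the energy inequality \eqref{eqSkr2.4} level by level on a cylinder whose initial time is exactly $\bar t$, so that the initial-data term vanishes thanks to the positivity already obtained in \eqref{eqSkr4.5}. Set $k_s:=\mu^-+\omega/2^s$ for integers $s\geqslant 2$, and write $A_s(t):=\{x\in B_{\rho/4}(x_0):u(x,t)<k_s\}$ for the spatial sublevel sets. By \eqref{eqSkr4.5}, $(u(\cdot,\bar t)-k_s)_-\equiv 0$ on $B_{\rho/2}(x_0)$ for every $s\geqslant 2$. I apply the minus version of \eqref{eqSkr2.4} on the cylinder $Q_{\rho/2,\,t_0-\bar t}(x_0,t_0)$ (so $t_0-\theta=\bar t$) with $k=k_s$, $\sigma=1/2$, $\chi\equiv 1$ and a spatial cutoff $\zeta\in C_0^\infty(B_{\rho/2}(x_0))$ equal to $1$ on $B_{\rho/4}(x_0)$ with $|\nabla\zeta|\leqslant 4/\rho$. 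Both the initial-data integral and the $|\chi_t|$-integral vanish.

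Next, using $(u-k_s)_-\leqslant\omega/2^s$ together with $({\rm g}_1)$ and $({\rm g}_2\psi_1)$, I dominate the integrand on the right-hand side of \eqref{eqSkr2.4} by a constant multiple of $e^{c_0\lambda(\rho)}\,g(x_0,t_0,\omega/(2^s\rho))\,(\omega/(2^s\rho))\,\zeta^{1-q}$, after which \eqref{eqSkr2.4} reads
$$
\int\limits_{B_{\rho/4}(x_0)\times\{t\}}(u-k_s)_-^2\,dx\;\leqslant\;
\gamma\,e^{c_0\lambda(\rho)}\,g\bigl(x_0,t_0,\omega/(2^s\rho)\bigr)\,\frac{\omega}{2^s\rho}\,(t_0-\bar t)\,|B_{\rho/2}(x_0)|.
$$
Since $(u-k_s)_-\geqslant\omega/2^{s+1}$ on $\{u(\cdot,t)<k_{s+1}\}\cap B_{\rho/4}(x_0)$, rewriting $g(x_0,t_0,v)=v\,\psi(x_0,t_0,v)$ and exploiting $(t_0-\bar t)\leqslant\theta_\ast=K_2\rho^2/\psi(x_0,t_0,\omega/(2^{s_\ast+2}\rho))$ collapses the $\omega$'s, $\rho$'s and factors of $2^s$, leaving
$$
|A_{s+1}(t)|\;\leqslant\;\gamma\,e^{c_0\lambda(\rho)}\,\frac{\psi(x_0,t_0,\omega/(2^s\rho))}{\psi(x_0,t_0,\omega/(2^{s_\ast+2}\rho))}\,|B_{\rho/4}(x_0)|.
$$
The one-sided comparison \eqref{psib0cond}, valid because $\omega/(2^s\rho)>b_0 R^{-\delta}$ for every $s\leqslant s^\ast$ under \eqref{eqSkr4.4} and $\rho<R^{\bar\delta}$, bounds the ratio on the right by $\gamma\,2^{-(s-s_\ast-2)\mu_1}$ once $s\geqslant s_\ast+2$. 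Hence $|A_{s^\ast}(t)|\leqslant\gamma\,e^{c_0\lambda(\rho)}\,2^{-(s^\ast-s_\ast-3)\mu_1}\,|B_{\rho/4}(x_0)|$ uniformly in $t\in(\bar t,t_0)$, and choosing $s^\ast$ large enough (depending on the data, $s_\ast$, $\rho$ and $\nu$) forces the right-hand side below $\nu\,|B_{\rho/4}(x_0)|$.

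\textbf{Main obstacle.} The delicate point is the calibration between the time-scale $\theta_\ast$ and the factor $g(x_0,t_0,\omega/(2^s\rho))/(\omega/2^{s+1})^2$ that appears after rearrangement. It is the asymmetric use of $({\rm g}_2\psi_1)$ that makes the argument close: the upper bound \eqref{eqSkr2.7} is applied pointwise on $(x,t)$ to transfer $g(x,t,\cdot)$ to $g(x_0,t_0,\cdot)$ at the price of $e^{c\lambda(\rho)}$, while the one-sided comparison \eqref{psib0cond} is used only at the center $(x_0,t_0)$ to supply the geometric $2^{-(s-s_\ast-2)\mu_1}$ decay. Verifying the threshold $\omega/(2^s\rho)>b_0 R^{-\delta}$ throughout $s\in[s_\ast,s^\ast]$ is what forces the standing assumption \eqref{eqSkr4.4} together with the size restriction $\rho<R^{\bar\delta}$, $\bar\delta>\delta$, into the final dependence of $s^\ast$ on $\rho$.
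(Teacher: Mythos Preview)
Your argument is correct and follows essentially the same route as the paper's: both apply \eqref{eqSkr2.4} on the half-radius ball with $\chi\equiv 1$ and initial time $\bar t$ so that \eqref{eqSkr4.5} annihilates the initial-data term, then convert the $g$-integrand via \eqref{eqSkrnew2.5} and \eqref{eqSkr2.7}, and finally combine the time-scale $\theta_\ast$ with \eqref{psib0cond} to extract the geometric decay $2^{-(s-s_\ast)\mu_1}$. One cosmetic slip: in the parabolic setting the cutoff exponent you quote as $\zeta^{1-q}$ should be $\zeta^{-1-\mu_2}$ (coming from \eqref{eqSkr2.7}, not the elliptic $({\rm g}_1)$), which is harmless since $c_1\geqslant \mu_2+2$.
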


\begin{proof}
We use inequality \eqref{eqSkr2.4} with
$\sigma=1/2$, $k=\mu^{-}+2^{1-s^{\ast}}\omega$,
$r=\rho$, $\chi(t)\equiv1$.
By \eqref{eqSkrnew2.5}, \eqref{eqSkr4.4}, \eqref{eqSkr4.5}
and $({\rm g}_{2}\psi_{1})$, we obtain
\begin{multline*}
\int\limits_{B_{\rho/4}(x_{0})\times\{t\}}
(u-k)_{-}^{2}\,dx
\leqslant
\frac{\gamma\,\omega}{2^{s^{\ast}}\rho}
\iint\limits_{A^{-}_{k,\rho/2, \theta_{\ast}}}
g\left(x,t, \gamma(u-k)_{-}/\rho\right)dxdt
\\
\leqslant \gamma e^{c_{0}\lambda(\rho)}
\left( \frac{\omega}{2^{s^{\ast}}} \right)^{2}
\frac{\psi(x_{0},t_{0}, 2^{-s^{\ast}}\rho^{-1}\omega)}
{\psi(x_{0},t_{0}, 2^{-s_{\ast}}\rho^{-1}\omega)}\, |B_{\rho}(x_{0})|
\leqslant
\gamma e^{c_{0}\lambda(\rho)}
\left( \frac{\omega}{2^{s^{\ast}}} \right)^{2}
2^{-(s^{\ast}-s_{\ast})\mu_{1}}|B_{\rho}(x_{0})|,
\end{multline*}
which implies that, for all $t\in(\bar{t},t_{0})$,
$$
\left| \left\{ x\in B_{\rho/4}(x_{0}):
u(x,t)\leqslant \mu^{-}+ \omega/2^{s^{\ast}} \right\} \right|
\leqslant
\gamma
2^{-(s^{\ast}-s_{\ast})\mu_{1}} e^{c_{0}\lambda(\rho)}|B_{\rho}(x_{0})|.
$$
From this we arrive at \eqref{eqSkr4.6} with
$$
2^{s^{\ast}}=2^{s_{\ast}}\nu^{-1/\mu_{1}} e^{c_{0}\lambda(\rho)/\mu_{1}},
$$
where the number $s_{\ast}$ will be specified later.
This completes the proof of the lemma.
\end{proof}

By Theorem \ref{TheoremSkr3.2} with $(\bar{x}, \bar{t})=(x_{0},t_{0})$, $\xi=2^{-s^{\ast}}$,
$\eta\leqslant \theta \leqslant \theta_{\ast}$, since the number $\nu_{2}$ defined in
\eqref{eqSkr3.11} satisfies
$$
\nu_{2}\geqslant \gamma^{-1} e^{-c_{0}(n+1)\lambda(\rho)}
\frac{\psi(x_{0},t_{0},\omega/4\rho)}{\psi(x_{0},t_{0},\omega/2^{s^{\ast}})}
\geqslant
\gamma^{-1} e^{-c_{0}(n+1)\lambda(\rho)},
$$
and choosing $\nu$ from the condition
$$
\nu=\gamma^{-1} e^{-c_{0}(n+1)\lambda(\rho)},
$$
we arrive at
$$
u(x,t)\geqslant \mu^{-}+\omega/2^{s^{\ast}+1}
\quad \text{for a.a.} \ (x,t)\in Q_{\rho/8, \eta/8}(x_{0},t_{0}).
$$
From this we obtain that
\begin{equation}\label{eqSkr4.7}
{\rm osc}\big\{u; Q_{\rho/8,\,\eta/8}(x_{0},t_{0})\big\}\leqslant
(1-2^{-s^{\ast}-1})\omega,
\end{equation}
with the number $s^{\ast}$ defined by
\begin{equation}\label{eqSkr4.8}
2^{s^{\ast}}=\gamma\, 2^{s_{\ast}}
e^{\beta_{0}\lambda(\rho)}, \quad
\beta_{0}=\frac{c_{0}(n+2)}{\mu_{1}},
\end{equation}
where $s_{\ast}$ will be specified later.


\subsection{Analysis of the second alternative}

\begin{lemma}
Fix a cylinder $Q_{\rho, \eta}(x_{0},\bar{t})$, then there exists
$$
\tilde{t}\in \left(\bar{t}-\eta,\, \bar{t}-\eta\,\frac{\bar{\nu}}{2}
e^{-c_{0}(2n+3)\lambda(\rho)}  \right)
$$
such that
\begin{equation}\label{eqSkr4.9}
\left| \left\{ x\in B_{\rho}(x_{0}): u(x, \tilde{t})\geqslant \mu^{+}-\omega/2 \right\} \right|
\leqslant
\frac{1-\bar{\nu}e^{-c_{0}(2n+3)\lambda(\rho)} }
{1-\dfrac{\bar{\nu}}{2} e^{-c_{0}(2n+3)\lambda(\rho)}}\,
|B_{\rho}(x_{0})|.
\end{equation}
\end{lemma}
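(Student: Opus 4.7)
The plan is to argue by contradiction using Fubini's theorem, which is the standard averaging/pigeonhole slicing argument of Di\,Benedetto type. For brevity set $a := \overline{\nu}\exp\big(-c_{0}(2n+3)\lambda(\rho)\big)$ and let $I := \big(\bar{t}-\eta,\; \bar{t}-\tfrac{a}{2}\eta\big)$, so that $|I| = \eta(1-a/2)$. The goal is to exhibit some $\tilde{t}\in I$ at which the claimed super-level density estimate holds.

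Suppose, toward a contradiction, that no such $\tilde{t}$ exists. Then for every $t \in I$ one has the strict inequality
$$
\big|\{x \in B_{\rho}(x_{0}) : u(x,t) \geqslant \mu^{+}-\omega/2\}\big|
> \frac{1-a}{1-a/2}\,|B_{\rho}(x_{0})|.
$$
Integrating this pointwise-in-$t$ bound over $I$ and applying Fubini's theorem yields
$$
\big|\{(x,t) \in B_{\rho}(x_{0})\times I : u(x,t) \geqslant \mu^{+}-\omega/2\}\big|
> \frac{1-a}{1-a/2}\,|B_{\rho}(x_{0})|\cdot \eta\,(1-a/2)
= (1-a)\,\eta\,|B_{\rho}(x_{0})|.
$$
Since $I \subset (\bar{t}-\eta,\bar{t})$, the set on the left-hand side is contained in $Q_{\rho,\eta}(x_{0},\bar{t}) \cap \{u \geqslant \mu^{+}-\omega/2\}$, whose measure is bounded by $(1-a)\,|Q_{\rho,\eta}(x_{0},\bar{t})| = (1-a)\,\eta\,|B_{\rho}(x_{0})|$ by the defining hypothesis of the second alternative. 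This contradicts the displayed strict inequality and so completes the proof.

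Nothing delicate occurs in this step; the only bookkeeping is the choice of the length of $I$, tuned so that the product $|I|\cdot \frac{1-a}{1-a/2}$ equals exactly $\eta(1-a)$, matching the available upper bound coming from the second alternative. Consequently there is no real obstacle here---the argument is a purely measure-theoretic slicing. The extracted time level $\tilde{t}$, which lies near the bottom of the cylinder $Q_{\rho,\eta}(x_{0},\bar{t})$, is precisely what is needed in the subsequent step to serve as the ``initial layer'' when invoking Lemma \ref{TheoremSkr3.2} (the De\,Giorgi-type lemma with initial data), thereby propagating the sub-level density estimate forward in time and ultimately producing the oscillation decay required in the proof of Theorem \ref{th2.1}.
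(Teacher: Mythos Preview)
Your proof is correct and follows exactly the same contradiction-by-Fubini argument as the paper: assume the slicewise bound fails on the whole subinterval, integrate, and contradict the second alternative. The only extraneous remark is your closing sentence identifying Lemma~\ref{TheoremSkr3.2} as the next step; in the paper the follow-up actually proceeds via the energy estimate~\eqref{eqSkr2.4} directly rather than through that lemma, but this does not affect the validity of the present proof.
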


\begin{proof}
Suppose that the statement of the lemma is false, than for all
$$
t\in \left(\bar{t}-\eta,\, \bar{t}-\eta\,\frac{\bar{\nu}}{2}
e^{-c_{0}(2n+3)\lambda(\rho)}  \right)
$$
there holds
\begin{equation*}
\left| \left\{ x\in B_{\rho}(x_{0}): u(x, t)\geqslant \mu^{+}-\omega/2 \right\} \right|
>
\frac{1-\bar{\nu}e^{-c_{0}(2n+3)\lambda(\rho)} }
{1-\dfrac{\bar{\nu}}{2} e^{-c_{0}(2n+3)\lambda(\rho)}}\,
|B_{\rho}(x_{0})|,
\end{equation*}
and hence
\begin{multline*}
\left| \left\{ (x,t)\in Q_{\rho, \eta}(x_{0}, \bar{t}):
u(x,t)\geqslant \mu^{+}-\omega/2 \right\} \right|
\\
\geqslant
\int\limits_{\bar{t}-\eta}^{\bar{t}
-\eta\frac{\bar{\nu}}{2} e^{-c_{0}(2n+3)\lambda(\rho)}}
\left| \left\{ x\in B_{\rho}(x_{0}): u(x, t)\geqslant \mu^{+}-\omega/2 \right\} \right|dt
\\
>\left(1-\bar{\nu}e^{-c_{0}(2n+3)\lambda(\rho)} \right)
\left|  Q_{\rho, \eta}(x_{0}, \bar{t}) \right|,
\end{multline*}
reaching a contradiction. The lemma is proved.
\end{proof}

We will assume later that $\bar{\nu}$ is so small that inequality \eqref{eqSkr4.9}
yields
\begin{equation*}
\left| \left\{ x\in B_{\rho}(x_{0}): u(x, \tilde{t})\geqslant \mu^{+}-\omega/2 \right\} \right|\leqslant
\left( 1- \frac{\bar{\nu}^{2}}{2} e^{-c_{0}(2n+3)\lambda(\rho)}
 \right) |B_{\rho}(x_{0})|.
\end{equation*}

\begin{lemma}
There exists $s_{1}>1$ depending only upon the data and $\rho$ such that
\begin{equation}\label{eqSkr4.10}
\left| \left\{ x\in B_{\rho}(x_{0}): u(x, t)\geqslant \mu^{+}-\omega/2^{s_{1}} \right\} \right|
\leqslant \left( 1- \frac{\bar{\nu}^{4}}{8}
e^{-c_{0}(2n+3)\lambda(\rho)}
 \right) |B_{\rho}(x_{0})|
\end{equation}
for all $t\in (\tilde{t}, \tilde{t}+\eta)$.
\end{lemma}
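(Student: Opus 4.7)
The lemma is the classical forward-in-time measure propagation near the supremum, adapted to the generalized $\mathcal{B}_{1,g,\lambda}$ class.

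For integer $s\geqslant 1$ set $k_s:=\mu^+-\omega/2^s$ and $A_s(t):=|\{x\in B_\rho(x_0):u(x,t)>k_s\}|$, and write $\alpha:=\tfrac{\bar\nu^2}{2}e^{-c_0(2n+3)\lambda(\rho)}$ for the small parameter produced by the previous lemma, so that $A_s(\tilde t)\leqslant(1-\alpha)|B_\rho(x_0)|$ for every $s\geqslant 1$. First, I would apply the energy inequality \eqref{eqSkr2.4} with the plus sign, level $k=k_s$, $\sigma=1/2$, $\chi\equiv1$, on the cylinder $B_\rho(x_0)\times(\tilde t,t)$ for each $t\in(\tilde t,\tilde t+\eta)$. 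Since $(u-k_s)_+\leqslant \omega/2^s$ and $\{u(\cdot,\tilde t)>k_s\}\subseteq\{u(\cdot,\tilde t)>k_1\}$, the initial-data integral is bounded by $(1-\alpha)(\omega/2^s)^2|B_\rho(x_0)|$. For the nonlinear integral I would use $({\rm g}_1)$ to replace $g(x,t',\cdot)$ by $g(x_0,t_0,\cdot)$ at the cost of a factor $e^{c_6\lambda(\rho)}$, apply \eqref{psib0cond} to compare $\psi(x_0,t_0,\omega/(2^s\rho))$ with $\psi(x_0,t_0,\omega/(4\rho))$ (gaining a factor $\gamma\,2^{-(s-2)\mu_1}$), and then absorb one $\psi(x_0,t_0,\omega/(4\rho))$ using the explicit length $\eta=K_2\rho^2/\psi(x_0,t_0,\omega/(4\rho))$. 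Combining yields
$$\int_{B_{\rho/2}(x_0)}(u-k_s)_+^2(x,t)\,dx\leqslant \big[(1-\alpha)+\gamma\,2^{-(s-2)\mu_1}e^{\gamma\lambda(\rho)}\big](\omega/2^s)^2|B_\rho(x_0)|. \qquad(\ast)$$

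The second step converts $(\ast)$ into a measure bound sharp enough for the factor $1-\tfrac{\bar\nu^4}{8}e^{-c_0(2n+3)\lambda(\rho)}$. A direct Chebyshev inequality would yield only $A_{s+1}(t)\leqslant 4(1-\alpha)|B_\rho(x_0)|$, which is insufficient. Instead, I would apply the De\,Giorgi-Poincar\'e Lemma~\ref{lem2.1DGPn} slice-by-slice between the levels $k_s<k_{s+1}$, integrate the result in $t\in(\tilde t,t)$, and combine with the gradient integral bound \eqref{eqSkrnew2.2}; the resulting inequality telescopes in $s$, so that summing over $s$ in the range $[\lceil\log_2\bar\nu^{-2}\rceil,s_1]$ produces the extra factor $s_1^{-1}$ needed to beat the trivial $4(1-\alpha)$ bound. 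This yields $A_{s_1}(t)\leqslant\gamma\, s_1^{-n/(n-1)}e^{\gamma\lambda(\rho)}\alpha^{-n/(n-1)}|B_\rho(x_0)|$, and I would then choose $s_1$ large enough---polynomially in $\bar\nu^{-1}$ and in $e^{\gamma\lambda(\rho)}$---to obtain the required conclusion.

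The main obstacle is that the De\,Giorgi-Poincar\'e step at time $t$ requires a lower bound on $|B_{\rho/2}(x_0)\setminus A_s(t)|$, which a priori is only controlled at $t=\tilde t$. This would be resolved by a contradiction argument: if the conclusion fails at some $t_*\in(\tilde t,\tilde t+\eta)$, then $|B_\rho(x_0)\setminus A_{s_1}(t_*)|$ would be smaller than $\tfrac{\bar\nu^4}{8}e^{-c_0(2n+3)\lambda(\rho)}|B_\rho(x_0)|$; plugging this smallness into the denominator of the slice-wise De\,Giorgi-Poincar\'e inequality then contradicts $(\ast)$ for the chosen $s_1$. A secondary technical point is that the exponent estimate \eqref{psib0cond} needs its argument above $b_0R^{-\delta}$ throughout the range of $s$ used, which follows from the standing hypothesis \eqref{eqSkr4.4} together with $\bar\delta>\delta/\delta_0$.
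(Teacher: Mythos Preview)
Your Step~1 is on the right track---it is precisely the energy inequality \eqref{eqSkr2.4} that the paper also uses, and your estimate of the nonlinear term via $({\rm g}_1)$ and \eqref{psib0cond} is correct. However, two points in your execution derail the argument.

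First, fixing $\sigma=\tfrac12$ is fatal. Your bound $(\ast)$ is over $B_{\rho/2}(x_0)$, but the conclusion concerns the super-level set in the full ball $B_\rho(x_0)$; the annulus $B_\rho\setminus B_{\rho/2}$ has measure $(1-2^{-n})|B_\rho|$, which swamps the tiny gain $\alpha=\tfrac{\bar\nu^2}{2}e^{-c_0(2n+3)\lambda(\rho)}$ no matter what you do afterwards. The paper keeps $\sigma$ free, accepts the price $\sigma^{-c_3}$ (equivalently $\sigma^{-\mu_2-1}$) in the nonlinear term, and later chooses $n\sigma$ comparable to $\alpha^2$.

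Second, your dismissal of Chebyshev and the detour through a slice-wise De\,Giorgi--Poincar\'e argument are both mistaken. The De\,Giorgi--Poincar\'e inequality at time $t$ requires a lower bound on $|B_\rho\setminus A_s(t)|$, which you do not have for $t>\tilde t$; your contradiction fix does not close, because if $|A_{s_1}(t_\ast)|$ is large then, by the inclusion $A_{s_1}(t_\ast)\subset A_s(t_\ast)$, the denominator $|B_\rho\setminus A_s(t_\ast)|$ is \emph{small}, making the De\,Giorgi--Poincar\'e right-hand side large rather than producing a contradiction with $(\ast)$. In fact, Chebyshev \emph{is} sufficient---but not at level $k_{s+1}$. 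The paper simply bounds the left side of \eqref{eqSkr2.4} from below by
\[
\int_{B_{(1-\sigma)\rho}\times\{t\}}(u-k_s)_+^2\,dx
\;\geqslant\;
\Bigl(\tfrac{\omega}{2^s}\Bigr)^2\!\bigl(1-2^{\,s-s_1}\bigr)^2
\Bigl(|A_{s_1}(t)|-n\sigma|B_\rho|\Bigr),
\]
and then makes three calibrated choices: $s_1-s$ large enough that $(1-2^{\,s-s_1})^{-2}=1+\alpha$ (so $(1+\alpha)(1-\alpha)\leqslant 1-\tfrac{\alpha^2}{4}$), $n\sigma=\tfrac{\alpha^2}{16}$, and finally $s$ large enough that the nonlinear remainder $\gamma\,\sigma^{-1-\mu_2}2^{-(s-2)\mu_1}e^{c_0\lambda(\rho)}$ is also $\leqslant\tfrac{\alpha^2}{16}$. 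This yields $|A_{s_1}(t)|\leqslant\bigl(1-\tfrac{\alpha^2}{4}\bigr)|B_\rho|=(1-\tfrac{\bar\nu^4}{8}e^{-c_0(2n+3)\lambda(\rho)})|B_\rho|$ directly, with $s_1$ depending only on the data and $\rho$ via \eqref{eqSkr4.11}. No telescoping, no De\,Giorgi--Poincar\'e, no contradiction argument is needed.
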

\begin{proof}
We use inequality \eqref{eqSkr2.4} with $k=\mu^{+}-\omega/2^{s}$, $r=\rho$,
$1<s<s_{1}$, $\chi(t)\equiv1$.
Using inequality \eqref{eqSkr4.4}, we estimate the last term on the right-hand side of
\eqref{eqSkr2.4} as follows
\begin{multline*}
\rho^{-1}\iint\limits_{A^{+}_{k,\rho,\eta}}
g\left( x,t, \frac{K_{1}(u-k)_{+}}{\sigma\rho\zeta} \right) (u-k)_{+}\,\zeta^{c_{1}-1}dxdt
\\
\leqslant \frac{\gamma}{\sigma^{\mu_{2}+1}}\,
e^{c_{6}\lambda(\rho)}\frac{\omega\,\eta}{2^{s}\rho}\,
g\left(x_{0},t_{0}, \frac{\omega}{2^{s}\rho} \right) |B_{\rho}(x_{0})|
\leqslant
\frac{\gamma\, \sigma^{-\mu_{2}-1}}{2^{s(\mu_{1}-1)}}\,
e^{c_{0}\lambda(\rho)}
\omega^{2}.
\end{multline*}
We estimate the term on the left-hand side of \eqref{eqSkr2.4} by
$$
\int\limits_{B_{\rho(1-\sigma)}(x_{0})} (u-k)_{+}^{2}dx
\geqslant
(\omega/2^{s})^{2} (1-2^{s-s_{1}})^{2}
\Big( \left|  \left\{ x\in B_{\rho}(x_{0}):
u(x,t)\geqslant \mu^{+}-\omega/2^{s_{1}} \right\}\right|-n\sigma |B_{\rho}(x_{0})| \Big).
$$
From this and \eqref{eqSkr2.4} we arrive at
\begin{multline*}
\left|  \left\{ x\in B_{\rho}(x_{0}):
u(x,t)\geqslant \mu^{+}-\omega/2^{s_{1}} \right\}\right|
\\
\leqslant  \Bigg\{n\sigma+ (1-2^{s-s_{1}})^{-2} \left( 1-\frac{\bar{\nu}^{2}}{2}
e^{-c_{0}(2n+3)\lambda(\rho)} \right)
+\frac{\gamma\,\sigma^{-1-\mu_{2}}}{2^{s(\mu_{1}-1)}} (1-2^{s-s_{1}})^{-2}
e^{c_{0}\lambda(\rho)} \Bigg\}
|B_{\rho}(x_{0})|.
\end{multline*}

First choose $s_{1}$ from the condition
$$
(1-2^{s-s_{1}})^{-2}=1+\frac{\bar{\nu}^{2}}{2}
e^{-c_{0}(2n+3)\lambda(\rho)},
$$
then choose $\sigma$ from the condition
$$
n\sigma=\frac{\bar{\nu}^{2}}{16}
e^{-2c_{0}(2n+3)\lambda(\rho)},
$$
and finally, choose $s$ such that
$$
\frac{\gamma\,\sigma^{-1-\mu_{2}}}{2^{s(\mu_{1}-1)}}
e^{c_{0}\lambda(\rho)}=
\frac{\bar{\nu}^{2}}{16}e^{-2c_{0}(2n+3)\lambda(\rho)},
$$
we arrive at \eqref{eqSkr4.10} with $s_{1}$ defined by the equality
\begin{equation}\label{eqSkr4.11}
2^{-s_{1}}=2^{-s}\left[ 1- \left( 1+ \frac{\bar{\nu}^{2}}{2}
e^{c_{0}(2n+3)\lambda(\rho)}  \right)^{-1/2} \right]
\geqslant\gamma^{-1}(\bar{\nu})
e^{-\beta_{1}\lambda(\rho)},
\end{equation}
$$
\beta_{1}=c_{0}\left( 2n+3+ \frac{1+(3+\mu_{2})(2n+3)}{\mu_{1}-1} \right).
$$
The lemma is proved.
\end{proof}

\begin{lemma}
For any $\nu_{2}\in (0,1)$ there exists $s_{\ast}$ depending only upon the data,
$\rho$ and $\nu_{2}$ such that
\begin{equation}\label{eqSkr4.12}
\left| \left\{ Q_{\rho,\theta_{\ast}} (x_{0},t_{0}):
u(x,t)\geqslant \mu^{+}-\omega/2^{s_{\ast}} \right\} \right|
\leqslant
\nu_{2} e^{-c_{0}(n+1)\lambda(\rho)}
|Q_{\rho,\theta_{\ast}} (x_{0},t_{0})|.
\end{equation}
\end{lemma}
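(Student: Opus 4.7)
The strategy is the classical De\,Giorgi--Ladyzhenskaya--Ural'tseva shrinking-lemma scheme adapted to the weighted Orlicz setting. Set $k_{s} := \mu^{+} - \omega/2^{s}$ for $s_{1} \leqslant s \leqslant s_{\ast}$ and let $A_{s}(t) := B_{\rho}(x_{0}) \cap \{u(\cdot, t) > k_{s}\}$. The first task is to upgrade \eqref{eqSkr4.10}, which was proved on a single sub-cylinder $Q_{\rho, \eta}(x_{0}, \bar{t})$, to a bound uniform in $t$. Since the second-alternative hypothesis holds for every sub-cylinder $Q_{\rho, \eta}(x_{0}, \bar{t}) \subset Q_{\rho, \theta_{\ast}}(x_{0}, t_{0})$, applying the two preceding lemmas for each such $\bar{t}$ yields
$$|B_{\rho}(x_{0}) \setminus A_{s}(t)| \geqslant \tfrac{1}{8}\,\bar{\nu}^{4}\, e^{-c_{0}(2n+3)\lambda(\rho)}\, |B_{\rho}(x_{0})|, \qquad s \geqslant s_{1},$$
uniformly for $t \in (t_{0} - \theta_{\ast} + \eta, t_{0})$; the monotonicity $A_{s}(t) \subset A_{s_{1}}(t)$ extends the bound from $s_{1}$ to all larger $s$.

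Next I combine the De\,Giorgi--Poincar\'e inequality (Lemma \ref{lem2.1DGPn}, applied slice-by-slice with $l = k_{s+1}$, $k = k_{s}$) with the energy-gradient bound. The complement estimate above turns the ratio factor in \eqref{eq2.2} into an explicit weight $e^{c_{0}(2n+3)\lambda(\rho)}$, so that
$$\frac{\omega}{2^{s+1}}\, |A_{s+1}(t)|^{1-1/n} \leqslant \gamma\, \rho\, e^{c_{0}(2n+3)\lambda(\rho)} \int_{A_{s}(t) \setminus A_{s+1}(t)} |\nabla u|\, dx.$$
I would use the trivial bound $|A_{s+1}(t)| \leqslant |B_{\rho}(x_{0})|^{1/n}\, |A_{s+1}(t)|^{1-1/n}$ to recover the first power on the left, integrate in $t$ over $(t_{0} - \theta_{\ast}, t_{0})$, and control the right-hand side via the $(u-k)_{+}$ analog of \eqref{eqSkrnew2.3} --- namely \eqref{eqSkrnew2.2} --- taking $\varepsilon = 1$, $\sigma = 1/2$, $\chi \equiv 1$. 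Exactly as in the computation \eqref{eqSqr3.5}, the definition of $\theta_{\ast}$ through $\psi(x_{0}, t_{0}, \omega/2^{s_{\ast}+2}\rho)$ together with $({\rm g}_{2}\psi_{1})$ absorbs the $g$-term and the time-derivative term and gives
$$\iint_{A_{s,\rho,\theta_{\ast}}^{+} \setminus A_{s+1,\rho,\theta_{\ast}}^{+}} |\nabla u|\, dx\,dt \leqslant \gamma\, \frac{\omega}{2^{s}\rho}\, e^{c_{0}\lambda(\rho)}\, |A_{s,\rho,\theta_{\ast}}^{+} \setminus A_{s+1,\rho,\theta_{\ast}}^{+}|.$$

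Putting these estimates together produces an inequality of the form
$$|A_{s+1,\rho,\theta_{\ast}}^{+}|^{n/(n-1)} \leqslant \gamma\, e^{\beta_{2}\lambda(\rho)}\, |Q_{\rho, \theta_{\ast}}(x_{0}, t_{0})|^{1/(n-1)}\, |A_{s,\rho,\theta_{\ast}}^{+} \setminus A_{s+1,\rho,\theta_{\ast}}^{+}|,$$
with some $\beta_{2}$ depending only on the data. Summing over $s = s_{1}, \ldots, s_{\ast} - 1$, the right-hand side telescopes and is bounded by $|Q_{\rho, \theta_{\ast}}|$, and monotonicity of the nested sets yields $(s_{\ast} - s_{1})\, |A_{s_{\ast},\rho,\theta_{\ast}}^{+}|^{n/(n-1)} \leqslant \gamma\, e^{\beta_{2}\lambda(\rho)}\, |Q_{\rho, \theta_{\ast}}|^{n/(n-1)}$. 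Choosing $s_{\ast} - s_{1} \geqslant \gamma\, \nu_{2}^{-n/(n-1)}\, e^{\beta_{3}\lambda(\rho)}$ for an appropriate $\beta_{3}$ delivers \eqref{eqSkr4.12} and fixes the explicit form of $s_{\ast}$ used in \eqref{eqSkr4.8}. The main obstacle is carefully tracking the $e^{\lambda(\rho)}$ weights through each step so that the final $s_{\ast}$ depends on $\rho$ only via a power of $e^{\lambda(\rho)}$, which is essential for the iteration producing Theorem \ref{th2.1}; a secondary subtle point is the uniformity in $t$ in Step 1, which requires invoking the second-alternative hypothesis for every sub-cylinder rather than a single one.
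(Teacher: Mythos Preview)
There is a genuine gap in your gradient estimate. With $\varepsilon=1$ in \eqref{eqSkrnew2.2}, the first term on the right is indeed $K_{1}e^{c_{2}\lambda(\rho)}\frac{M_{+}}{\rho}\,|A^{+}_{k_{s},\rho,\theta_{\ast}}\setminus A^{+}_{k_{s+1},\rho,\theta_{\ast}}|$, but the second term, the one carrying the factor $\varepsilon^{c_{4}}=1$, involves integrals over the \emph{full} superlevel set $A^{+}_{k_{s},\rho,\theta_{\ast}}$ (and, if you take $\chi\equiv1$, an additional initial-data term of comparable size). After the absorption you invoke ``as in \eqref{eqSqr3.5}'', that second term is bounded by $\gamma\,\frac{\omega}{2^{s}\rho}\,e^{c_{0}\lambda(\rho)}\,|A^{+}_{k_{s},\rho,\theta_{\ast}}|$, not by the layer measure. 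Hence your displayed bound with only $|A^{+}_{k_{s},\rho,\theta_{\ast}}\setminus A^{+}_{k_{s+1},\rho,\theta_{\ast}}|$ on the right is false, and the telescoping sum that follows collapses to the trivial inequality $|A^{+}_{s+1}|\leqslant\gamma|A^{+}_{s}|$.

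The paper repairs exactly this point by choosing
\[
\varepsilon=\left(\frac{|A^{+}_{k_{s},\rho,\theta_{\ast}}\setminus A^{+}_{k_{s+1},\rho,\theta_{\ast}}|}{|Q_{\rho,\theta_{\ast}}(x_{0},t_{0})|}\right)^{1/(1+c_{4})},
\]
which balances the two terms so that the resulting bound on the layer-gradient integral is proportional to $\big(|\text{layer}|/|Q|\big)^{c_{4}/(1+c_{4})}|Q|$; it also takes $\chi$ to vanish at $t_{0}-2\theta_{\ast}$ to kill the initial-data term. Summation over $s$ then gives the decay $(s_{\ast}-s_{1})^{-c_{4}/(1+c_{4})}$ and fixes $s_{\ast}$ in the form \eqref{eqSkr4.13} with $\beta_{2}=c_{0}(5n+8)(c_{4}+1)/c_{4}$. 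In short, the free parameter $\varepsilon$ in the $\mathcal{B}_{1}$ class is precisely what makes the shrinking lemma work here; setting $\varepsilon=1$ throws that mechanism away.
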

\begin{proof}
For $s=s_{1}+1, \ldots , s_{\ast}$, we set $k_{s}=\mu^{+}-\omega/2^{s}$.
We use inequality \eqref{eqSkrnew2.2} with $\sigma=1/2$ and
$$
\varepsilon=\left( \frac{|A^{+}_{k_{s},\rho,\theta_{\ast}}\setminus
A^{+}_{k_{s+1},\rho,\theta_{\ast}}|}
{|Q_{\rho,\theta_{\ast}}(x_{0},t_{0})|} \right)^{1/(1+c_{4})}.
$$
We also choose $\chi(t)$ such that $0\leqslant \chi(t)\leqslant 1$,
$\chi(t)=0$ for $t\leqslant t_{0}-2\theta_{\ast}$, $\chi(t)=1$ for
$t\geqslant t_{0}-\theta_{\ast}$ and $|\chi_{t}|\leqslant 2/\theta_{\ast}$.
By these choices and by \eqref{eqSkrnew2.5}, \eqref{eqSkr2.7} and \eqref{eqSkr4.4},
inequality \eqref{eqSkrnew2.2} implies
$$
\iint\limits_{A^{+}_{k_{s},\rho,\theta_{\ast}}\setminus A^{+}_{k_{s+1},\rho,\theta_{\ast}}}
|\nabla u|\, dxdt \leqslant \frac{\gamma\,\omega}{2^{s}\rho}
\left( \frac{|A^{+}_{k_{s},\rho,\theta_{\ast}}\setminus
A^{+}_{k_{s+1},\rho,\theta_{\ast}}|}
{|Q_{\rho,\theta_{\ast}}(x_{0},t_{0})|} \right)^{c_{4}/(1+c_{4})}
e^{c_{0}\lambda(\rho)} |Q_{\rho,\theta_{\ast}}(x_{0},t_{0})|.
$$
By \eqref{eqSkr4.10} and Lemma \ref{lem2.1DGPn},
from the previous we obtain
\begin{multline*}
\frac{\omega}{2^{s+1}}\,|A^{+}_{k_{s},\rho, \theta_{\ast}}|\leqslant
\gamma(\bar{\nu})\,\rho\, e^{2c_{0}(2n+3)\lambda(\rho)}
\iint\limits_{A^{+}_{k_{s},\rho,\theta_{\ast}}\setminus A^{+}_{k_{s+1},\rho,\theta_{\ast}}}
|\nabla u|\, dxdt
\\
\leqslant \gamma(\bar{\nu})
\left( \frac{|A^{+}_{k_{s},\rho,\theta_{\ast}}\setminus
A^{+}_{k_{s+1},\rho,\theta_{\ast}}|}
{|Q_{\rho,\theta_{\ast}}(x_{0},t_{0})|} \right)^{c_{4}/(1+c_{4})}
e^{c_{0}(4n+7)\lambda(\rho)} \frac{\omega}{2^{s}\rho}\,
|Q_{\rho,\theta_{\ast}}(x_{0},t_{0})|.
\end{multline*}
Summing up this inequality for $s=s_{1}+1, \ldots, s_{\ast}$ and choosing $s_{\ast}$
from the condition
$$
\frac{\gamma(\bar{\nu}) e^{c_{0}(4n+7)\lambda(\rho)}}
{(s_{\ast}-s_{1})^{c_{4}/(c_{4}+1)}}=
\nu_{2} e^{-c_{0}(n+1)\lambda(\rho)},
$$
we arrive at \eqref{eqSkr4.12} with
\begin{equation}\label{eqSkr4.13}
s_{\ast}=s_{1}+\gamma(\bar{\nu},\nu_{2})
e^{\beta_{2}\lambda(\rho)}, \quad
\beta_{2}=c_{0}(5n+8)(c_{4}+1)/c_{4},
\end{equation}
and $s_{1}$ was defined in \eqref{eqSkr4.11}.
The lemma is proved.
\end{proof}

By Theorem \ref{thSkr3.1} with $(\bar{x}, \bar{t})=(x_{0},t_{0})$, $\xi=2^{-s_{\ast}}$,
$\theta=\theta_{\ast}$, $\nu_{2}=1/\gamma$, inequality \eqref{eqSkr4.12} implies
$$
u(x,t)\leqslant \mu^{+}-\omega/2^{s_{\ast}+1} \ \ \text{ for a.a.} \ \
(x,t)\in Q_{\rho/2, \theta_{\ast}/2}(x_{0},t_{0}),
$$
which yields
\begin{equation}\label{eqSkr4.14}
{\rm osc}\big\{u; Q_{\rho/2,\,\theta_{\ast}/2}(x_{0},t_{0})\big\}\leqslant
(1-2^{-s^{\ast}-1})\omega.
\end{equation}

Collecting estimates \eqref{eqSkr4.7} and \eqref{eqSkr4.14}, we obtain that
\begin{equation}\label{eqSkr4.15}
{\rm osc}\big\{u; Q_{\rho/8,\,\eta/8}(x_{0},t_{0})\big\}\leqslant
(1-2^{-s^{\ast}-1})\,\omega,
\end{equation}
where $s^{\ast}$ defined by the equality
\begin{equation}
2^{s^{\ast}}=\gamma e^{\beta_{0}\lambda(\rho)}
2^{ \gamma(\bar{\nu}) \Lambda(\beta_{1},\rho) +
\gamma(\bar{\nu}, \nu_{2}) \Lambda(\beta_{2},\rho)},
\end{equation}
and numbers $\beta_{0}$, $\beta_{1}$, $\beta_{2}$ were defined in \eqref{eqSkr4.8},
\eqref{eqSkr4.11} and \eqref{eqSkr4.13}.

Choose $\rho$ small enough such that
$$
\begin{aligned}
2^{s^{\ast}}&=\gamma e^{\beta_{0}\lambda(\rho)}
2^{ \gamma(\bar{\nu}) \Lambda(\beta_{1},\rho) +
\gamma(\bar{\nu}, \nu_{2}) \Lambda(\beta_{2},\rho)}
\\
&\leqslant
4^{ \gamma(\bar{\nu}) \Lambda(\beta_{1},\rho) +
\gamma(\bar{\nu}, \nu_{2}) \Lambda(\beta_{2},\rho)}
\leqslant  \exp\big(\gamma\Lambda(\beta,\rho)\big),
\end{aligned}
$$
where $\beta$ depends only on the known data.
Then inequality \eqref{eqSkr4.15} can be rewritten in the form
\begin{equation}\label{eqSkr4.18}
{\rm osc}\big\{u; Q_{\rho/8,\,\eta/8}(x_{0},t_{0})\big\}\leqslant
\left( 1-\frac{1}{2} \exp\big(-\gamma\Lambda(\beta,\rho)\big) \right)\omega.
\end{equation}

Define the sequences
$$
r_{j}:=\frac{\rho}{8^{j+1}}, \quad
\omega_{j+1}:= \left(1-\frac{1}{2}
\exp\big(-\gamma\Lambda(\beta,r_{j})\big)\right)\omega_{j},
$$
$$
\theta_{j}:=\frac{r_{j}^{2}}{\psi(x_{0},t_{0}, \omega_{j}/4r_{j})},
\quad
Q_{j}:=Q_{r_{j}, \theta_{j}} (x_{0},t_{0}), \quad
j=0,1,2, \ldots .
$$
Since
$$
\frac{\omega_{j}}{r_{j}}=\frac{\omega_{j+1}}
{8r_{j+1} \left( 1- \frac{1}{2} \exp\big(-\gamma\Lambda(\beta,r_{j})\big) \right) }
\leqslant \frac{\omega_{j+1}}{r_{j+1}}, \quad j=0,1,2, \ldots ,
$$
from \eqref{eqSkr4.18} we have
$$
{\rm osc}\{u; Q_{1}\}\leqslant \omega_{1}.
$$
Repeating the previous procedure, by our choices we obtain
that either
$$
\omega_{j}\leqslant 4(1+b_{0})r_{j}^{1-\delta/\bar{\delta}}
\exp\big( \gamma\Lambda(\beta,r_{j}) \big) \leqslant
4(1+b_{0})r_{j}^{1-\delta_{0}}
\exp\big( \gamma\Lambda(\beta,r_{j}) \big)
$$
or
$$
{\rm osc}\left\{u; Q_{j}\right\}\leqslant \omega_{j}, \quad
j=1,2, \ldots .
$$
Note that by \eqref{eqSkrnew2.6}, \eqref{psib0cond} we have an inclusion
$\widetilde{Q}_{j}\subset Q_{j}$, where
$$
\widetilde{Q}_{j}:= B_{r_{j}}(x_{0})\times
\bigg( t_{0}-\frac{r_{j}^{2}}{\psi(x_{0},t_{0},\frac{M}{2r_{j}})},\, t_{0}  \bigg).
$$
Iterating the previous inequality and using \eqref{eqSkr3.112505}, we obtain
\begin{multline*}
{\rm osc}\{u; \widetilde{Q}_{j}\}\leqslant
\omega \prod\limits_{i=0}^{j-1}
\left( 1- \frac{1}{2} \exp\big(-\gamma\Lambda(\beta,r_{i})\big) \right)
+\gamma(1+b_{0})\rho^{1-\delta_{0}}\exp\big( \gamma\Lambda(\beta,\rho) \big)
\\
\leqslant
\omega\exp\left( -\frac{1}{2} \sum\limits_{i=0}^{j-1}
\exp\big(-\gamma\Lambda(\beta,r_{i})\big) \right)
+\gamma(1+b_{0})\rho^{1-\delta_{0}}\exp\big( \gamma\Lambda(\beta,\rho) \big),
\end{multline*}
which yields  
$$
{\rm osc}\{u; \widetilde{Q}_{j}\}\leqslant \omega
\exp \left( -\gamma\int\limits_{2r_{j}}^{\rho}
\exp\big(-\gamma \Lambda(\beta,s) \big)
\frac{ds}
{s } \right)+
\gamma(1+b_{0})\rho^{1-\delta_{0}} \exp\big( \gamma \Lambda(\beta,\rho) \big),
$$
which implies continuity of
$u\in \mathcal{B}_{1,g,\lambda}(Q_{R,R}(x_{0},t_{0}))$
at $(x_{0},t_{0})$ under conditions \eqref{eqSkr3.112505}, \eqref{eqSkr3.122505}.
This completes the proof of Theorem \ref{th2.1}. 


\section{Continuity in the ,,singular'' case, proof of Theorem \ref{th2.2} }

We fix a positive number $\bar{\delta}$ by the condition
$\bar{\delta}>\max\{\delta/\delta_{0}, 1+\delta_{0}\}$.
For $0<\rho<R^{\bar{\delta}}$, we construct the cylinder
$Q_{\rho,2c_{5}c_{10}M\rho}(x_{0},t_{0})\subset
 Q_{R,R}(x_{0},t_{0})$.
We set $Q_{\rho}(x_{0},t_{0}):=Q_{\rho,2c_{5}c_{10}M\rho}(x_{0},t_{0})$, \
$\mu^{+}:=\text{ess}\!\!\!\!\sup\limits_{Q_{\rho}(x_{0},t_{0})}\!\!\! u$, \
$\mu^{-}:=\text{ess}\!\!\!\!\inf\limits_{Q_{\rho}(x_{0},t_{0})}\!\!\! u$, \
$\omega:=\mu^{+}-\mu^{-}$.

If
$$
\eta\omega\geqslant (1+b_{0})\rho^{1-\delta/\bar{\delta}}
\geqslant (1+b_{0})\rho R^{-\delta}
$$
for some $\eta\in(0,1)$, then, by \eqref{eqeqSkr2.8mu3} and \eqref{eqSkrnew2.6}, we have
$$
\frac{\rho^{2}}{\psi(x_{0},t_{0}, \eta\omega/\rho)}\leqslant
\frac{c_{10}\rho^{2}}{\psi(x_{0},t_{0}, \omega/\rho)}
\leqslant
\frac{c_{10}\rho\,\omega}{g(x_{0},t_{0},1)}
\leqslant 2c_{5}c_{10}M\rho,
$$
and the following inclusion is true:
\begin{equation}\label{defthetac6c10}
Q_{\rho, \theta}\subset Q_{\rho}(x_{0},t_{0}),
\quad \text{where} \ \
\theta:=\frac{\rho^{2}e^{-c_{0}\lambda(\rho)}}{\psi(x_{0},t_{0}, \eta\omega/\rho)}.
\end{equation}

The following two alternative cases are possible:
\begin{equation}\label{firstalterpsi2}
\left|\left\{ x\in B_{\rho}(x_{0}): u(x,t_{0}-\theta) \leqslant\mu^{-}+\omega/2 \right\} \right|
\leqslant \frac{1}{2} |B_{\rho}(x_{0})|
\end{equation}
or
$$
\left|\left\{ x\in B_{\rho}(x_{0}):u(x,t_{0}-\theta)\geqslant \mu^{+}-\omega/2 \right\} \right|
\leqslant \frac{1}{2} |B_{\rho}(x_{0})|.
$$
Both cases can be considered completely similar.
Assume, for example, the first one.
\begin{lemma}\label{lemaoneta}
There exists $\eta\in (0,\, 1/2)$ depending only upon the data
such that if
\begin{equation}\label{etaomeggeqb0}
\eta\omega\geqslant (1+b_{0})\rho^{1-\delta/\bar{\delta}},
\end{equation}
then
\begin{equation}\label{eqSkr4.20}
\left|\left\{ x\in B_{\rho}(x_{0}): u(x,t)\leqslant \mu^{-}+ \eta\omega \right\} \right|
\leqslant \frac{7}{8}\,|B_{\rho}(x_{0})| \quad
\text{for all } \ t\in(t_{0}-\theta, t_{0}).
\end{equation}
\end{lemma}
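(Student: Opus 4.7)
\textbf{Proof plan for Lemma \ref{lemaoneta}.}
The plan is to apply the energy inequality \eqref{eqSkr2.4} at the intermediate level $k = \mu^- + \omega/4$, exploiting the initial datum supplied by \eqref{firstalterpsi2}, and then deduce the conclusion at the lower level $\mu^- + \eta\omega$ via Chebyshev. I would fix a spatial cutoff $\zeta \in C_0^\infty(B_\rho(x_0))$, $0 \leq \zeta \leq 1$, $\zeta \equiv 1$ on $B_{(1-\sigma)\rho}(x_0)$, $|\nabla \zeta| \leq (\sigma\rho)^{-1}$, with $\sigma$ to be chosen at the end, and take $\chi(t) \equiv 1$, so that $\chi_t \equiv 0$ and $\chi(t_0-\theta) = 1$.

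With these choices, \eqref{eqSkr2.4} reduces, for every $t \in (t_0-\theta, t_0)$, to
$$\int_{B_{(1-\sigma)\rho}(x_0)} (u-k)_-^2\, dx \leq \int_{B_\rho(x_0)} (u-k)_-^2(x, t_0-\theta)\, dx + \frac{K_1}{\sigma^{c_3}} \iint_{A^-_{k,\rho,\theta}} g\!\left(x,t, \tfrac{K_1 (u-k)_-}{\sigma\rho\zeta}\right) \tfrac{(u-k)_-}{\rho}\, \zeta^{c_1-1}\, dxdt.$$
Since $\eta < 1/4$ forces $k < \mu^- + \omega/2$, the first alternative \eqref{firstalterpsi2} and the crude bound $(u-k)_- \leq \omega/4$ control the initial term by $\tfrac{1}{2}(\omega/4)^2 |B_\rho(x_0)|$.

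For the space-time integral, the hypothesis \eqref{etaomeggeqb0}, combined with $\rho < R^{\bar\delta}$, $\bar\delta > \delta/\delta_0$, and $\eta \leq 1/4$, guarantees $\omega/(4\rho) > b_0 R^{-\delta}$, so that $({\rm g}_2\psi_2)$ is applicable at every level involved. Monotonicity of $g$ and \eqref{eqeqSkr2.8mu3} yield
$$g\!\left(x,t, \tfrac{K_1 (u-k)_-}{\sigma \rho \zeta}\right) \leq c_{10}\left(\tfrac{K_1}{\sigma \zeta}\right)^{1-\mu_3} g\!\left(x,t, \tfrac{\omega}{4\rho}\right);$$
then $({\rm g}_1)$ transfers this to $(x_0,t_0)$ with cost $K_2 e^{c_6 \lambda(\rho)}$, and the admissibility condition $c_1 \geq 2-\mu_3$ already used in Lemma \ref{thSkr3.1} turns $\zeta^{c_1-1}\cdot\zeta^{-(1-\mu_3)}$ into a nonnegative power of $\zeta$ that I simply bound by $1$. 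A second application of \eqref{eqeqSkr2.8mu3} compares $\psi(x_0,t_0,\omega/(4\rho))$ with $\psi(x_0,t_0,\eta\omega/\rho)$ and produces a factor $(4\eta)^{\mu_3}$; combined with the identity $\theta\,\psi(x_0,t_0,\eta\omega/\rho) = \rho^2 e^{-c_0\lambda(\rho)}$ built into \eqref{defthetac6c10} and the inequality $c_0 \geq c_6$, the space-time integral is controlled by $C \sigma^{-c_3-(1-\mu_3)} \eta^{\mu_3} (\omega/4)^2 |B_\rho(x_0)|$, where $C$ depends only on the data.

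Combining the two estimates and applying Chebyshev at level $\mu^- + \eta\omega$, where $(u-k)_- \geq (1-4\eta)\omega/4$, gives
$$\bigl|\{u(x,t) < \mu^- + \eta\omega\} \cap B_{(1-\sigma)\rho}(x_0)\bigr| \leq \frac{\tfrac{1}{2} + C\, \sigma^{-c_3-(1-\mu_3)}\, \eta^{\mu_3}}{(1-4\eta)^2}\, |B_\rho(x_0)|.$$
Adding the annular correction $|B_\rho \setminus B_{(1-\sigma)\rho}| \leq n\sigma |B_\rho|$, I would first pick $\sigma$ small (say $n\sigma = 1/8$), which fixes the data-dependent constant $C_\sigma := C\sigma^{-c_3-(1-\mu_3)}$, and then choose $\eta \in (0,1/4)$ so small that $(\tfrac{1}{2} + C_\sigma \eta^{\mu_3})/(1-4\eta)^2 \leq 3/4$; this is possible by continuity at $\eta = 0$, where the ratio equals $1/2$, and gives \eqref{eqSkr4.20}. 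The main obstacle is the careful bookkeeping of levels entering $({\rm g}_2\psi_2)$: every application of \eqref{eqeqSkr2.8mu3} must be carried out at arguments above $b_0 R^{-\delta}$, and the arrangement of $({\rm g}_2\psi_2)$ \emph{before} $({\rm g}_1)$ is essential, since $({\rm g}_1)$ has a built-in upper bound $M$ on its argument that is violated by $K_1/(\sigma\zeta)$-type factors.
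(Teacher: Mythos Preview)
Your proof is correct and follows essentially the same route as the paper: apply \eqref{eqSkr2.4} with $\chi\equiv 1$, bound the initial term via the first alternative \eqref{firstalterpsi2}, estimate the space--time integral using $({\rm g}_{2}\psi_{2})$ together with $({\rm g}_{1})$ and the definition \eqref{defthetac6c10} of $\theta$, and finish by Chebyshev plus an annular correction. The only cosmetic differences are that the paper takes $k=\mu^{-}+\omega/2$ (you use $\omega/4$) and parametrizes the lower level by $2^{-j}$ rather than by $\eta$ directly; your explicit remark on applying $({\rm g}_{2}\psi_{2})$ \emph{before} $({\rm g}_{1})$ to respect the upper bound $M$ in the latter is a useful clarification that the paper leaves implicit.
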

\begin{proof}
We use inequality \eqref{eqSkr2.4} with $\chi(t)\equiv1$, $r=\rho$, $\theta$ defined in
\eqref{defthetac6c10}, and with
\begin{equation}\label{kinlem4.5}
k=\mu^{-}+\omega/2.
\end{equation}
Moreover, the second integral on the right-hand side of \eqref{eqSkr2.4} is obviously equal
to zero, since $\chi_{t}\equiv 0$, and the first, by virtue of \eqref{firstalterpsi2}
and \eqref{kinlem4.5}, is estimated in the following way:
\begin{equation}\label{upperestintBrhot0-thu-k}
\int\limits_{B_{\rho}(x_{0})\times\{t_{0}-\theta\}}
(u-k)_{-}^{2}\,\zeta^{c_{1}}\chi(t_{0}-\theta) dx\leqslant
\frac{1}{2}\left(\frac{\omega}{2}\right)^{2}|B_{\rho}(x_{0})|.
\end{equation}
By \eqref{eqSkrnew2.5}, \eqref{eqeqSkr2.8mu3} and \eqref{etaomeggeqb0},
we estimate the last term on the right-hand side of \eqref{eqSkr2.4} as follows
\begin{equation}\label{estintgAkrtheta}
\begin{aligned}
\iint\limits_{A^{-}_{k,\rho, \theta}}
g&\left( x,t, \frac{K_{1}(u-k)_{-}}{\sigma\rho\zeta} \right)
\frac{(u-k)_{-}}{\rho}\, \zeta^{c_{1}-1}dxdt
\\
&\leqslant \gamma\sigma^{\mu_{3}-1}\, e^{c_{6}\lambda(\rho)}
\frac{\omega}{\rho}\, \theta\, g(x_{0},t_{0}, \omega/2\rho)\, |B_{\rho}(x_{0})|
\\
&\leqslant \gamma\, \sigma^{\mu_{3}-1}\eta\omega^{2}\,
\frac{g(x_{0},t_{0}, \omega/2\rho)}{g(x_{0},t_{0}, \eta\omega/\rho)}\,
|B_{\rho}(x_{0})|\leqslant
\gamma\, \sigma^{\mu_{3}-1}\eta^{\mu_{3}} \Big(\frac{\omega}{2}\Big)^{2} |B_{\rho}(x_{0})|,
\end{aligned}
\end{equation}
here we assume also that $c_{1}\geqslant2-\mu_{3}$.

Using \eqref{kinlem4.5} and estimating the term on the left-hand side of \eqref{eqSkr2.4},
we obtain for any $j>1$
\begin{multline}\label{lowerestintBrhotu-k}
\int\limits_{B_{\rho}(x_{0})\times\{t\}}
(u-k)_{-}^{2}\,\zeta^{c_{1}} dx\geqslant
\int\limits_{B_{(1-\sigma)\rho}(x_{0})\times\{t\}\cap
\{u\leqslant \mu^{-}+\omega/2^{j}\}}
(u-k)_{-}^{2} dx
\\
\geqslant \left(\frac{\omega}{2}\right)^{2} \left(1-\frac{1}{2^{j-1}}\right)^{2}
\Big( |\{x\in B_{\rho}(x_{0}): u(x,t)\leqslant \mu^{-}+\omega/2^{j}\}|
-n\sigma |B_{\rho}(x_{0})| \Big).
\end{multline}
Collecting estimates \eqref{upperestintBrhot0-thu-k}, \eqref{estintgAkrtheta}
and \eqref{lowerestintBrhotu-k} in \eqref{eqSkr2.4},
we arrive at
$$
|\{x\in B_{\rho}(x_{0}): u(x,t)\leqslant \mu^{-}+\omega/2^{j}\}|\leqslant
\bigg(\frac{1}{2(1-2^{1-j})^{2}}
+ \frac{\gamma\sigma^{-\gamma}\eta^{\mu_{3}}}{(1-2^{1-j})^{2}} +n\sigma\bigg)
|B_{\rho}(x_{0})|
$$
for all $t\in (t_{0}-\theta, t_{0})$.
Choosing $j$ from the condition $(1-2^{1-j})^{2}=3/4$, then $\sigma$ from the condition
$n\sigma=1/16$, and finally, choosing $\eta$ from the condition
$\gamma\sigma^{-\gamma}\eta^{\mu_{3}}= 1/16$, we arrive at
\eqref{eqSkr4.20}. This completes the proof of the lemma.
\end{proof}

Now we will use inequality \eqref{eqSkr2.9} with $\sigma=1/2$, $r=\rho$,
$k=\mu^{-}+\varepsilon^{j}\omega$, $j=1,2,\ldots, j_{\ast}$, where
$\varepsilon=\varepsilon(\rho)\in(0,1)$ and $j_{\ast}=j_{\ast}(\rho)$ will be
determined later.
Set $v=u-\mu^{-}$,
$$A_{j}(t):=\{x\in B_{\rho}(x_{0}): v(x,t)<\varepsilon^{j}\omega\},$$
$$
Y_{j}(t):=\frac{1}{|B_{\rho}(x_{0})|} \int\limits_{A_{j}(t)}
\frac{t-t_{0}+\theta}{\theta}\,\zeta^{c_{1}}dx, \quad
y_{j}:=\sup\limits_{t_{0}-\theta/2<t<t_{0}} Y_{j}(t).
$$

Further we will assume that
\begin{equation}\label{ineq6.3}
\varepsilon^{j_{\ast}+1}\omega
\geqslant
4(1+b_{0})\rho^{1-\delta/\bar{\delta}}.
\end{equation}
Inequality \eqref{ineq6.3} together with condition \eqref{eqeqSkr2.8mu3}
and with the definition of $\theta$ in \eqref{defthetac6c10}
imply that
\begin{multline*}
\frac{1}{\theta}\Phi_{\varepsilon^{j}\omega}(x_{0},t_{0},v)\leqslant
\gamma\, \frac{\rho}{\theta}\int\limits_{0}^{\varepsilon^{j}\omega}
\frac{ds}{g\left(x_{0},t_{0}, \frac{(1+\varepsilon)\varepsilon^{j}\omega-s}{\rho}\right)}
\\
\leqslant \gamma\, \frac{\rho}{\theta}\,
\frac{\big((1+\varepsilon)\varepsilon^{j}\omega\big)^{1-\mu_{3}}}
{g\left(x_{0},t_{0}, \frac{(1+\varepsilon)\varepsilon^{j}\omega}{\rho}\right)}
\int\limits_{0}^{\varepsilon^{j}\omega}
\frac{ds}{\big((1+\varepsilon)\varepsilon^{j}\omega-s\big)^{1-\mu_{3}}}
\\
\leqslant
\gamma\,\frac{\varepsilon^{j}}{\eta}\,e^{c_{0}\lambda(\rho)}
\frac{g(x_{0},t_{0},\eta\omega/\rho)}{g(x_{0},t_{0},\eta\varepsilon^{j}\omega/\rho)}
\leqslant \gamma\, \frac{\varepsilon^{j\mu_{3}}}{\eta}\,e^{c_{0}\lambda(\rho)}
\leqslant \gamma\,e^{c_{0}\lambda(\rho)}.
\end{multline*}
Here we also used the evident inequality, which is a consequence of \eqref{eqeqSkr2.8mu3}
\begin{multline*}
G(x_{0},t_{0},{\rm w})\geqslant \int\limits_{{\rm w}/2}^{{\rm w}}g(x_{0},t_{0},s)\,ds
\\
\geqslant
c_{10}^{-1}g(x_{0},t_{0},{\rm w})\,{\rm w}^{\mu_{3}-1}
\int\limits_{{\rm w}/2}^{{\rm w}} s^{1-\mu_{3}}\,ds
\geqslant \frac{1}{2c_{10}}\,g(x_{0},t_{0},{\rm w})\,{\rm w}
\ \ \text{ if } \ {\rm w}\geqslant2(1+b_{0})R^{-\delta}.
\end{multline*}

Therefore, using Lemma \ref{lemaoneta} and the previous inequality,
from \eqref{eqSkr2.9} we obtain
\begin{multline}\label{eqparabSkr6.4}
D^{-}\int\limits_{B_{\rho}(x_{0})\times\{t\}}
\Phi_{\varepsilon^{j}\omega}(x_{0},t_{0},v)\,\frac{t-t_{0}+\theta}{\theta}\,\zeta^{c_{1}}dx
\\
+\gamma^{-1}\int\limits_{B_{\rho}(x_{0})\times\{t\}}
\left|\,\ln \frac{(1+\varepsilon)\varepsilon^{j}\omega}
{(1+\varepsilon)\varepsilon^{j}\omega-(v-\varepsilon^{j}\omega)_{-}}\right|
\,\frac{t-t_{0}+\theta}{\theta}\,\zeta^{c_{1}}dx
\\
\leqslant \gamma e^{c_{0}\lambda(\rho)}|B_{\rho}(x_{0})|
\quad \text{for all} \ \ t\in(t_{0}-\theta,t_{0}).
\end{multline}

Fix $\bar{t}\in(t_{0}-\theta/2,t_{0})$ such that $Y_{j+1}(\bar{t})=y_{j+1}$. If
$$
D^{-}\int\limits_{B_{\rho}(x_{0})\times\{\bar{t}\}}
\Phi_{\varepsilon^{j}\omega}(x_{0},t_{0},v)\,
\frac{\bar{t}-t_{0}+\theta}{\theta}\,\zeta^{c_{1}}dx\geqslant0,
$$
then inequality \eqref{eqparabSkr6.4} implies that
\begin{equation}\label{eqSkrnew6.5}
y_{j+1}\ln\frac{1}{2\varepsilon}\leqslant\gamma e^{c_{0}\lambda(\rho)}.
\end{equation}
For fixed $\nu\in(0,1)$, we choose $\varepsilon$ from the condition
$$
\varepsilon\leqslant \frac{1}{2}\exp
\left(-\frac{\gamma}{\nu} \exp\big( c_{0}(n+2)\lambda(\rho) \big)  \right),
$$
then inequality \eqref{eqSkrnew6.5} yields
$$
y_{j+1}\leqslant \nu e^{-c_{0}(n+1)\lambda(\rho)}.
$$

Assume now that
$$
D^{-}\int\limits_{B_{\rho}(x_{0})\times\{\bar{t}\}}
\Phi_{\varepsilon^{j}\omega}(x_{0},t_{0},v)\,
\frac{\bar{t}-t_{0}+\theta}{\theta}\,\zeta^{c_{1}}dx<0.
$$

\textsl{Claim}. The following inequalities hold:
\begin{equation}\label{eqSkrnew6.8}
\int\limits_{0}^{\sigma}
\frac{(\varepsilon+s)ds}{G\big(x_{0},t_{0},(\varepsilon+s)\varepsilon^{j}\omega/\rho\big)}
\geqslant
\sigma\int\limits_{0}^{1}
\frac{(\varepsilon+s)ds}{G\big(x_{0},t_{0},(\varepsilon+s)\varepsilon^{j}\omega/\rho\big)}
\quad \text{for} \ \sigma\in(0,1),
\end{equation}
\begin{equation}\label{eqSkrnew6.9}
\int\limits_{0}^{\varepsilon}
\frac{(\varepsilon+s)ds}{G\big(x_{0},t_{0},(\varepsilon+s)\varepsilon^{j}\omega/\rho\big)}
\leqslant
2c_{10}^{2}\,\varepsilon^{\mu_{3}}\int\limits_{0}^{1}
\frac{(\varepsilon+s)ds}{G\big(x_{0},t_{0},(\varepsilon+s)\varepsilon^{j}\omega/\rho\big)}.
\end{equation}

Indeed, inequality \eqref{eqSkrnew6.8} is a consequence of the fact that the function
$G(x_{0},t_{0}, {\rm v})/{\rm v}$ is nondecreasing
$$
\int\limits_{0}^{\sigma}
\frac{(\varepsilon+s)ds}{G\big(x_{0},t_{0},(\varepsilon+s)\varepsilon^{j}\omega/\rho\big)}=
\sigma
\int\limits_{0}^{1}
\frac{(\varepsilon+s\sigma)ds}{G\big(x_{0},t_{0},(\varepsilon+s\sigma)\varepsilon^{j}\omega/\rho\big)}
\geqslant
\sigma\int\limits_{0}^{1}
\frac{(\varepsilon+s)ds}{G\big(x_{0},t_{0},(\varepsilon+s)\varepsilon^{j}\omega/\rho\big)}.
$$
To prove \eqref{eqSkrnew6.9} we use condition \eqref{eqeqSkr2.8mu3}:
\begin{multline*}
\int\limits_{0}^{\varepsilon}
\frac{(\varepsilon+s)ds}{G\big(x_{0},t_{0},(\varepsilon+s)\varepsilon^{j}\omega/\rho\big)}=
\varepsilon^{2}
\int\limits_{0}^{1}
\frac{(1+s)ds}{G\big(x_{0},t_{0},(1+s)\varepsilon^{j+1}\omega/\rho\big)}
\\
\leqslant
2c_{10}^{2}\,\varepsilon^{2}
\int\limits_{0}^{1}
\frac{1+s}{G\big(x_{0},t_{0},(\varepsilon+s)\varepsilon^{j}\omega/\rho\big)}
\left( \frac{\varepsilon+s}{\varepsilon(1+s)} \right)^{2-\mu_{3}} ds
\leqslant
2c_{10}^{2}\,\varepsilon^{\mu_{3}} \int\limits_{0}^{1}
\frac{(\varepsilon+s)ds}{G\big(x_{0},t_{0},(\varepsilon+s)\varepsilon^{j}\omega/\rho\big)},
\end{multline*}
here we also used the inequality
\begin{multline*}
G(x_{0},t_{0}, {\rm w})\leqslant g(x_{0},t_{0}, {\rm w}) {\rm w}
\leqslant
c_{10}\,\frac{g(x_{0},t_{0}, {\rm v}) {\rm w}^{2}}{{\rm v}}
\left( \frac{{\rm v}}{{\rm w}} \right)^{\mu_{3}}
\\
\leqslant
2c_{10}^{2}\,G(x_{0},t_{0}, {\rm v})
\left( \frac{{\rm w}}{{\rm v}} \right)^{2-\mu_{3}}
 \ \ \text{if } \ {\rm w}\geqslant {\rm v}\geqslant 2(1+b_{0})R^{-\delta},
\end{multline*}
which proves the claim.

Define
$$
t_{\ast}:=\sup\Bigg\{ t\in(t_{0}-\theta/2,t_{0}): D^{-}\int\limits_{B_{\rho}(x_{0})\times\{t\}}
\Phi_{\varepsilon^{j}\omega}(x_{0},t_{0}, v)\,
\frac{t-t_{0}+\theta}{\theta}\,\zeta^{c_{1}} dx\geqslant0 \Bigg\}.
$$
From the definition of $t_{\ast}$ we have
\begin{equation}\label{eqSkr6.10new}
I(\bar{t}):=\int\limits_{B_{\rho}(x_{0})\times\{\bar{t}\}}
\Phi_{\varepsilon^{j}\omega}(x_{0},t_{0}, v)\,
\frac{\bar{t}-t_{0}+\theta}{\theta}\,\zeta^{c_{1}} dx
\leqslant I(t_{\ast}).
\end{equation}
By \eqref{eqSkrnew6.9} we obtain
\begin{multline}\label{eqSkr6.11new}
I(\bar{t})\geqslant \int\limits_{A_{j+1}(\bar{t})}
\frac{\bar{t}-t_{0}+\theta}{\theta}\,\zeta^{c_{1}}dx
\int\limits_{0}^{\varepsilon^{j}(1-\varepsilon)\omega}
\frac{(1+\varepsilon)\varepsilon^{j}\omega-s}
{G\left(x_{0},t_{0}, \frac{(1+\varepsilon)\varepsilon^{j}\omega-s}{\rho}\right)}\,ds
\\
=(\varepsilon^{j}\omega)^{2}\int\limits_{A_{j+1}(\bar{t})}
\frac{\bar{t}-t_{0}+\theta}{\theta}\,\zeta^{c_{1}}dx
\int\limits_{0}^{1-\varepsilon}
\frac{(1+\varepsilon-s)ds}
{G\left(x_{0},t_{0},\frac{1+\varepsilon-s}{\rho}\varepsilon^{j}\omega\right)}
\\
=(\varepsilon^{j}\omega)^{2}\int\limits_{A_{j+1}(\bar{t})}
\frac{\bar{t}-t_{0}+\theta}{\theta}\,\zeta^{c_{1}}dx
\int\limits_{\varepsilon}^{1}
\frac{(\varepsilon+s)ds}
{G\left(x_{0},t_{0},\frac{\varepsilon+s}{\rho}\varepsilon^{j}\omega\right)}
\\
\geqslant y_{j+1}(1-2c_{10}^{2}\varepsilon^{\mu_{3}})(\varepsilon^{j}\omega)^{2}
|B_{\rho}(x_{0})| \int\limits_{0}^{1}
\frac{(\varepsilon+s)ds}
{G\left(x_{0},t_{0},\frac{\varepsilon+s}{\rho}\varepsilon^{j}\omega\right)}.
\end{multline}

Let us estimate from above the term on the right-hand side of \eqref{eqSkr6.10new}.
By Fubini's theorem we conclude that
\begin{multline*}
I(t_{\ast})\leqslant \int\limits_{B_{\rho}(x_{0})\times\{t_{\ast}\}}
\frac{t_{\ast}-t_{0}+\theta}{\theta}\,\zeta^{c_{1}}dx
\int\limits_{0}^{\varepsilon^{j}\omega}
\frac{\big( (1+\varepsilon)\varepsilon^{j}\omega-s \big)
\mathbb{I}_{\{\varepsilon^{j}\omega-v>s\}}}
{G\big(x_{0},t_{0}, \frac{(1+\varepsilon)\varepsilon^{j}\omega-s}{\rho}\big)}\,ds
\\
=(\varepsilon^{j}\omega)^{2}\int\limits_{0}^{1}
\frac{(1+\varepsilon-s)ds}
{G\big(x_{0},t_{0}, \frac{1+\varepsilon-s}{\rho}\,\varepsilon^{j}\omega\big)}
\int\limits_{B_{\rho}(x_{0})\times\{t_{\ast}\}}
\frac{t_{\ast}-t_{0}+\theta}{\theta}\,\zeta^{c_{1}}
\mathbb{I}_{\{\varepsilon^{j}\omega-v>\varepsilon^{j}s\}} dx.
\end{multline*}

Similarly to \eqref{eqSkrnew6.5} we obtain
$$
\int\limits_{B_{\rho}(x_{0})\times\{t_{\ast}\}}
\frac{t_{\ast}-t_{0}+\theta}{\theta}\,\zeta^{c_{1}}
\mathbb{I}_{\{\varepsilon^{j}\omega-v>\varepsilon^{j}s\}} dx
\leqslant
\frac{\gamma\, e^{c_{0}\lambda(\rho)}}{\ln \frac{1+\varepsilon}{1+\varepsilon-s}}\,
|B_{\rho}(x_{0})| \quad \text{for any} \ s\in[0,1].
$$
Particularly, if
$$
(1+\varepsilon)
\bigg[1-\exp\Big(-\frac{2\gamma}{\nu}\,e^{c_{0}(n+2)\lambda(\rho)}\Big)\bigg]<s<1,
$$
then
$$
\int\limits_{B_{\rho}(x_{0})\times\{t_{\ast}\}}
\frac{t_{\ast}-t_{0}+\theta}{\theta}\,\zeta^{c_{1}}
\mathbb{I}_{\{\varepsilon^{j}\omega-v>\varepsilon^{j}s\}} dx
\leqslant
\frac{\nu}{2}\, e^{-c_{0}(n+1)\lambda(\rho)}|B_{\rho}(x_{0})|.
$$
Choosing $s_{\ast}$ from the condition
$$
s_{\ast}=(1+\varepsilon)
\bigg[1-\exp\Big(-\frac{4\gamma}{\nu}\,e^{c_{0}(n+2)\lambda(\rho)}\Big)\bigg],
$$
and assuming that
$
y_{j}\geqslant \nu e^{-c_{0}(n+1)\lambda(\rho)},
$
from the previous and \eqref{eqSkrnew6.8}, we obtain
\begin{multline}\label{eqSkr6.12new}
I(t_{\ast})\leqslant y_{j}(\varepsilon^{j}\omega)^{2}|B_{\rho}(x_{0})|
\bigg( \int\limits_{0}^{s_{\ast}}
\frac{(1+\varepsilon-s)ds}
{G\big(x_{0},t_{0}, \frac{1+\varepsilon-s}{\rho} \varepsilon^{j}\omega\big)}
+\frac{1}{2} \int\limits_{s_{\ast}}^{1}
\frac{(1+\varepsilon-s)ds}
{G\big(x_{0},t_{0}, \frac{1+\varepsilon-s}{\rho} \varepsilon^{j}\omega\big)} \bigg)
\\
\leqslant y_{j}(\varepsilon^{j}\omega)^{2}|B_{\rho}(x_{0})|
\bigg( \int\limits_{0}^{1} \frac{(\varepsilon+s)ds}
{G\big(x_{0},t_{0}, \frac{\varepsilon+s}{\rho} \varepsilon^{j}\omega\big)}
-\frac{1}{2}
\int\limits_{0}^{1-s_{\ast}} \frac{(\varepsilon+s)ds}
{G\big(x_{0},t_{0}, \frac{\varepsilon+s}{\rho} \varepsilon^{j}\omega\big)} \bigg)
\\
\leqslant \bigg(1-\frac{1-s_{\ast}}{2}\bigg)
y_{j}(\varepsilon^{j}\omega)^{2}|B_{\rho}(x_{0})|
\int\limits_{0}^{1} \frac{(\varepsilon+s)ds}
{G\big(x_{0},t_{0}, \frac{\varepsilon+s}{\rho} \varepsilon^{j}\omega\big)}.
\end{multline}
Combining \eqref{eqSkr6.10new}, \eqref{eqSkr6.11new}, \eqref{eqSkr6.12new},
we obtain that either
$$
y_{j}\leqslant \nu e^{-c_{0}(n+1)\lambda(\rho)}
$$
or
\begin{equation}\label{eqSkr6.13new}
y_{j+1}\leqslant \frac{1-(1-s_{\ast})/2}{1-2c_{10}^{2}\varepsilon^{\mu_{3}}}y_{j}.
\end{equation}
By our choice of $s_{\ast}$ we have
\begin{multline*}
\frac{1-(1-s_{\ast})/2}{1-2c_{10}^{2}\varepsilon^{\mu_{3}}}=1-
\frac{(1+\varepsilon)\exp(-\frac{4\gamma}{\nu}e^{c_{0}(n+2)\lambda(\rho)})-4c_{10}^{2}\varepsilon^{\mu_{3}}-\varepsilon}
{2(1-2c_{10}^{2}\varepsilon^{\mu_{3}})}
\\
\leqslant
1-\frac{1}{2}\left( \exp\left(-\frac{4\gamma}{\nu}e^{c_{0}(n+2)\lambda(\rho)}\right)
-(1+4c_{10}^{2})\varepsilon^{\mu_{3}} \right).
\end{multline*}
Choosing $\varepsilon$ from the condition
$$
(1+4c_{10}^{2})\varepsilon^{\mu_{3}}=\frac{1}{2}
\exp\left(-\frac{4\gamma}{\nu}e^{c_{0}(n+2)\lambda(\rho)}\right),
$$
we obtain from \eqref{eqSkr6.13new} that
$$
y_{j+1}\leqslant \big(1-\sigma_{0}(\rho)\big)y_{j}, \quad
\sigma_{0}(\rho)=\frac{1}{4}\exp\left(-\frac{4\gamma}{\nu}e^{c_{0}(n+2)\lambda(\rho)}\right).
$$
Iterating this inequality we obtain that
$$
y_{j_{\ast}}=\big(1-\sigma_{0}(\rho)\big)^{j_{\ast}}y_{0}
\leqslant \big(1-\sigma_{0}(\rho)\big)^{j_{\ast}},
$$
choosing $j_{\ast}$ from the condition
$$
\big(1-\sigma_{0}(\rho)\big)^{j_{\ast}}\leqslant
\nu e^{-c_{0}(n+1)\lambda(\rho)},
$$
we conclude that
\begin{equation}\label{eqSkr6.15new}
y_{j_{\ast}}\leqslant\nu e^{-c_{0}(n+1)\lambda(\rho)}.
\end{equation}

Let us estimate the value of $\varepsilon^{j_{\ast}}$ from below.
By our choices of $\varepsilon$ and $j_{\ast}$ we have
\begin{multline*}
\varepsilon^{j_{\ast}}\geqslant\exp
\left( -\frac{1}{\sigma_{0}(\rho)}\ln\Big(\frac{1}{\nu}e^{c_{0}(n+2)\lambda(\rho)}\Big)
\ln\Big( 6^{1/\mu_{3}}\exp \Big( \frac{4\gamma}{\nu\mu_{3}}
e^{c_{0}(n+2)\lambda(\rho)} \Big)  \Big) \right)
\\
\geqslant \gamma_{0}^{-1}
\exp \left( -6\exp\Big( \frac{4\gamma}{\nu} e^{c_{0}(n+2)\lambda(\rho)} \Big)  \right)
=\gamma_{0}^{-1} \exp\big(-6\Lambda_{1}(C,\beta,\rho)\big),
\end{multline*}
where $C=4\gamma/\nu$, $\beta=c_{0}(n+2)$ and
$\Lambda_{1}(C,\beta,\rho):=\exp(Ce^{\beta\lambda(\rho)})$.
Particularly, if
\begin{equation}\label{eqSkr6.16new}
\omega\geqslant \frac{2\gamma_{0}^{2}}{2\gamma_{0}-1}
(1+b_{0})\rho^{1-\delta/\bar{\delta}}
\exp\big(8\Lambda_{1}(C,\beta,\rho)  \big),
\end{equation}
then $\omega\varepsilon^{j_{\ast}+1}\geqslant (1+b_{0})\rho^{1-\delta/\bar{\delta}}$
and inequalities \eqref{etaomeggeqb0} and \eqref{ineq6.3} be fulfilled.
Moreover, by Theorem \ref{thSkr3.1} from \eqref{eqSkr6.15new} and \eqref{eqSkr6.16new}
we obtain that
$$
u(x,t)\geqslant \mu^{-}+\frac{1}{2}\tau_{1}(\rho)
\quad \text{for a.a.} \ (x,t)\in Q_{\rho/2,\theta'/2}(x_{0},t_{0})
$$
where
$$
\tau_{1}(\rho):=\gamma_{0}^{-1}
\exp\big(-8\Lambda_{1}(C,\beta,\rho) \big),
\quad \theta':=\frac{\rho^{2}}{\psi(x_{0},t_{0},\omega\tau_{1}(\rho)/\rho)},
$$
which implies
\begin{equation*}
\text{osc}\{u; Q_{\rho/2,\theta'/2}(x_{0},t_{0})\}
\leqslant \bigg(1-\frac{1}{2}\tau_{1}(\rho)\bigg)\omega.
\end{equation*}

Set
$$
\rho_{1}:=\frac{1}{2}\bigg( 1-\frac{1}{2\gamma_{0}} \bigg)
 \rho\,\tau_{1}(\rho),
\quad
\omega_{1}:=\bigg(1-\frac{1}{2}\tau_{1}(\rho)\bigg)\omega,
$$
$$
\theta_{1}:=\frac{\rho_{1}^{2}}{\psi(x_{0},t_{0},\omega_{1}/\rho_{1})},
\quad
Q_{1}:=Q_{\rho_{1},\theta_{1}}(x_{0},t_{0}).
$$
By (${\rm g}_{2}\psi_{2}$) we have 
$
g(x_{0},t_{0}, \omega_{1}/\rho_{1}) \geqslant
g(x_{0},t_{0}, \omega\tau_{1}(\rho)/\rho)
$
and $\theta'\geqslant \theta_{1}$,
so the inclusion
$Q_{\rho/2, \theta'/2}(x_{0},t_{0})\subset Q_{1}$ follows.
Moreover, we obtain $\text{osc}\{u;Q_{1}\}\leqslant \omega_{1}$.

For $j=1,2,\ldots$, we define the sequences
$$
\begin{aligned}
\rho_{j}:&=\frac{1}{2}\bigg( 1-\frac{1}{2\gamma_{0}} \bigg)
\rho_{j-1} \tau_{1}(\rho_{j-1}),
\\
\omega_{j}:&=\max\left\{ \bigg(1-\frac{1}{2}\tau_{1}(\rho_{j-1}) \bigg)\omega_{j-1}, \
\frac{2\gamma_{0}^{2}(1+b_{0})}{2\gamma_{0}-1}\,
\frac{\rho_{j-1}^{1-\delta/\bar{\delta}}}{\tau_{1}(\rho_{j-1})} \right\},
\\
\theta_{j}:&=\frac{\rho_{j}^{2}}{\psi(x_{0},t_{0},\omega_{j}/\rho_{j})},
\quad \quad \quad \quad \hskip 3mm Q_{j}:=Q_{\rho_{j}, \theta_{j}}(x_{0},t_{0}),
\\
\widetilde{\theta}_{j}:&=
\frac{\rho_{j}^{2}}
{\psi\left(x_{0},t_{0},(1+b_{0})\rho_{j-1}^{-\delta/\bar{\delta}}\right)},
\quad
\widetilde{Q}_{j}:=Q_{\rho_{j}, \widetilde{\theta}_{j}}(x_{0},t_{0}).
\end{aligned}
$$
Repeating the previous procedure, we obtain for any $j=1,2,\ldots$ that
\begin{multline*}
\text{osc}\{u;\widetilde{Q}_{j}\}\leqslant\text{osc}\{u;Q_{j}\} \leqslant \omega_{j}\leqslant
\left(1-\frac{1}{2}\,\tau_{1}(\rho_{j-1}) \right)\omega_{j-1}+
\frac{\gamma(1+b_{0})\rho_{j-1}^{1-\delta/\bar{\delta}}}{\tau_{1}(\rho_{j-1})}
\\
\leqslant \omega\prod\limits_{i=0}^{j-1}\left(1-\frac{1}{2}\tau_{1}(\rho_{i}) \right)
+\frac{\gamma(1+b_{0})\rho_{j-1}^{1-\delta/\bar{\delta}}}{\tau_{1}(\rho_{j-1})}+
\gamma(1+b_{0})\sum\limits_{i=0}^{j-2}\frac{\rho_{i}^{1-\delta/\bar{\delta}}}{\tau_{1}(\rho_{i})}
\prod\limits_{k=i+1}^{j-1}\left(1-\frac{1}{2}\tau_{1}(\rho_{k}) \right)
\phantom{\leqslant\leqslant}
\\
\leqslant \omega\exp\left( -\frac{1}{2}\sum\limits_{i=0}^{j-1}\tau_{1}(\rho_{i}) \right)
+\gamma(1+b_{0})\rho^{1-\delta_{0}}
\exp\big( 8\Lambda_{1}(c,\beta,\rho) \big),
\end{multline*}
which proves Theorem \ref{th2.2}.

\vskip3.5mm
{\bf Acknowledgements.} The research of the first author was supported by grants of Ministry of Education and Science of Ukraine
(project numbers are 0118U003138, 0119U100421).

\bigskip

CONTACT INFORMATION

\medskip
Igor I.~Skrypnik\\Institute of Applied Mathematics and Mechanics,
National Academy of Sciences of Ukraine, Gen. Batiouk Str. 19, 84116 Sloviansk, Ukraine\\
Vasyl' Stus Donetsk National University, Mathematical Analysis and Differential Equations,
600-richcha Str. 21, 21021 Vinnytsia, Ukraine\\iskrypnik@iamm.donbass.com

\medskip
Mykhailo V.~Voitovych\\Institute of Applied Mathematics and Mechanics,
National Academy of Sciences of Ukraine, Gen. Batiouk Str. 19, 84116 Sloviansk, Ukraine\\voitovichmv76@gmail.com

\end{document}